\numberwithin{equation}{section}
\newtheorem{claim}{\bf \t}[part]
\newtheorem{theorem}{Theorem}[section]
\newtheorem{corollary}{Corollary}[section]
\newtheorem{lemma}{Lemma}[section]
\newtheorem{proposition}{Proposition}[section]
\newtheorem{remark}{Remark}[section]
\newtheorem{definition}{Definition}[section]
\def\cu{\mathcal{U}}
\def\bu{\mathbf{u}}
\def\bx{\mathbf{x}}
\def\by{\mathbf{y}}
\def\bz{\mathbf{z}}
\def\bn{\mathbf{n}}
\def\bt{\boldsymbol{\tau}}
\def\xxi{\boldsymbol{\xi}}
\def\nnu{\boldsymbol{\nu}}
\def\v{\varepsilon}
\def\t{\theta}
\newcommand{\dd}{{\rm d}}
\newcommand \R{\mathbb{R}}
\begin{document}

\title[Steady Discontinuous Euler Flows with Large Vorticity]{Steady Euler Flows with Large Vorticity
and Characteristic Discontinuities \\in Arbitrary Infinitely Long Nozzles}

\author{Gui-Qiang G. Chen}
\address{Gui-Qiang G. Chen, Mathematical Institute, University of Oxford,
Oxford, OX2 6GG, UK; School of Mathematical Sciences, Fudan University, Shanghai 200433, China;
AMSS \& UCAS, Chinese Academy of Sciences, Beijing 100190, China}
\email{chengq@maths.ox.ac.uk}

\author{Fei-Min Huang}
\address{Fei-Min Huang, School of Mathematical Sciences, University of Chinese Academy of Sciences, Beijing 100049, China;
Academy of Mathematics and System Sciences, Chinese Academy of Sciences, Beijing 100190, China}
\email{fhuang@amt.ac.cn}

\author{Tian-Yi Wang}
\address{Tian-Yi Wang, Department of Mathematics, School of Science, Wuhan University of Technology, Wuhan, Hubei 430070, China;
Gran Sasso Science Institute, viale Francesco Crispi, 7, 67100 L'Aquila, Italy}
\email{tianyiwang@whut.edu.cn; tian-yi.wang@gssi.infn.it;  wangtianyi@amss.ac.cn}

\author{Wei Xiang}
\address{Wei Xiang, Department of Mathematics, City University of Hong Kong, Kowloon, Hong Kong, China}
\email{weixiang@cityu.edu.hk}

\keywords{Steady Euler flows, large vorticity,
characteristic discontinuities, nozzles,
free boundary,
existence, uniqueness,
smooth solutions, weak solutions,
subsonic-sonic limit, incompressible limit,
vortex sheet, entropy wave}
\subjclass[2010]{35Q31, 35Q35, 35D30, 35D35, 35B30, 35F60, 35M30, 76D03, 76B03, 76N10, 76G25}
\date{\today}

\begin{abstract} We establish the existence and uniqueness of smooth solutions with large vorticity
and weak solutions with vortex sheets/entropy waves for the steady Euler equations
for both compressible and incompressible fluids in arbitrary infinitely long nozzles.
We first develop a new approach to establish the existence of smooth solutions without assumptions
on the sign of the second derivatives of the horizontal velocity, or the Bernoulli and entropy functions,
at the inlet for the smooth case.
Then the existence for the smooth case can be applied to construct approximate solutions to
establish the existence of weak solutions with vortex sheets/entropy waves
by nonlinear arguments.
This is the first result on the global existence of solutions of the multidimensional steady compressible
full Euler equations with free boundaries, which are not necessarily small perturbations of piecewise constant background solutions.
The subsonic-sonic limit of the solutions is also shown.
Finally, through the incompressible limit, we establish the existence and uniqueness of incompressible Euler flows
in arbitrary infinitely long nozzles for both the smooth solutions with large vorticity
and the weak solutions with vortex sheets.
The methods and techniques developed here will be useful for solving other problems involving similar difficulties.
\end{abstract}
\maketitle

\section{Introduction}

We are concerned with the full Euler system for steady compressible fluids in two space dimensions, which can be written as
\begin{eqnarray}\label{1.5}
\begin{cases}
\mbox{div}_{\bx}(\rho \bu)=0,\\
\mbox{div}_{\bx}(\rho \bu\otimes \bu)+ \nabla p=0,\\
\mbox{div}_{\bx} (\rho \bu E+ p \bu)=0,
\end{cases}
\end{eqnarray}
where $\bx=(x_1,x_2)\in \R^2$ represents the space coordinates, $\bu=(u_1, u_2)\in \R^2$ is the flow velocity,
$\rho$, $p$, and $E$ are the density, pressure, and total energy, respectively.
Note that $\rho$, $\bu$, $p$, and $E$ are not independent,
and they are connected by the constitutive relation.
In particular, for ideal polytropic gases,
$$
E=\frac{q^2}{2}+\frac{p}{(\gamma-1)\rho}
$$
with adiabatic exponent $\gamma>1$ and speed  $q^2=|\bu|^2=u_1^2+u_2^2$.

For smooth solutions, system \eqref{1.5} consists of four equations,
two of which are transport equations corresponding to the linearly degenerate characteristic fields.
The other two equations are of mixed elliptic-hyperbolic type:
They are elliptic if and only if the flow is subsonic.

More precisely, the two transport equations can be derived as the two conservation laws along the streamlines:
\begin{eqnarray}\label{1.9}
\begin{cases}
\mbox{div}_{\bx} (\rho \bu B)=0,\\
\mbox{div}_{\bx} (\rho \bu S)=0,
\end{cases}
\end{eqnarray}
where $B$ in \eqref{1.9} is the Bernoulli function determined by Bernoulli's law:
\begin{eqnarray}\label{ber}
B= \frac{q^2}{2}+\frac{\gamma p}{(\gamma-1)\rho},
\end{eqnarray}
and  $S$ in \eqref{1.9} is the entropy function defined by
\begin{eqnarray}\label{ent}
S=\frac{\gamma p}{(\gamma-1)\rho^\gamma}.
\end{eqnarray}

The sound speed $c$ of the flow is
\begin{equation}\label{1.7}
c=\sqrt{\frac{\gamma p}{\rho}},
\end{equation}
and the Mach number is
\begin{equation}\label{1.8}
M=\frac{q}{c}.
\end{equation}
Then, for a fixed Bernoulli function $B$,
there is the critical speed $q_{\rm cr}=\sqrt{\frac{2(\gamma-1)}{\gamma+1}B}$
such that, when $q \le q_{\rm cr}$, the flow is subsonic-sonic (\emph{i.e.} $M \le 1$);
otherwise, it is supersonic (\emph{i.e.} $M>1$).

\smallskip
In this paper, we are concerned with the existence and uniqueness of compressible subsonic smooth flows
with large vorticity without assumptions on the sign of the second-order derivatives of the horizontal velocity,
or the Bernoulli functions and the entropy functions, at the inlet,
as well as the weak solutions with vortex sheets/entropy waves in arbitrary infinitely long nozzles
which are not small perturbations of trivial background solutions such as piecewise constant solutions.
In particular, we develop a new nonlinear approach to deal with the problem with discontinuities, which does not need
the solution to be a small perturbation of a piecewise constant solution.
The earlier results
on the existence of subsonic compressible flows for large vorticity
or with a discontinuity in infinitely long nozzles
are under either small perturbation conditions or some signed conditions (\emph{cf.} \cite{Bae,CC,DXX,DL}).
Thus, this is the first result on the global existence of weak solutions containing vortex sheets and entropy waves,
which
are not merely perturbations around piecewise constant solutions.
In addition, it is also the first result on the global existence of weak solutions
of multidimensional steady compressible full Euler equations with free boundaries,
which are not perturbations of given background solutions.
The free boundary can be either a characteristic discontinuity or shock.
Moreover, the results in \cite{CC,DXX,DL} cannot directly be applied to construct a sequence of approximate solutions for our purpose here,
since their second derivatives are not always non-positive or non-negative and the first derivatives are not uniformly bounded owing to
the fact that the vorticity is a Dirac measure concentrated on the contact discontinuity.
Therefore, the results and methods we obtain here are both new.
Finally, we take the incompressible limit as $\gamma\rightarrow\infty$ to obtain
the existence and uniqueness of incompressible Euler flows in arbitrary infinitely long nozzles.

\smallskip
There are four main difficulties for these problems.
The first is that the Bernoulli function $\mathbb{B}$ and the entropy function $\mathbb{S}$ with respect to
the stream function have jumps owing to the contact discontinuities,
where $(\mathbb{B}, \mathbb{S})$ are introduced by \eqref{equ:2.3mathbb} later through $(B,S)$.
The second is that the first derivatives of $(\mathbb{B}, \mathbb{S})$ are not uniformly bounded,
and $\mathbb{B}''$ and $\mathbb{S}''$ have no sign, so that the energy method
as in \cite{ChenDengXiang,DXX} cannot directly be applied.
The third is to prove that there is no stagnation point inside the flow such that
several important inequalities can be derived from the equations of the pressure and the flow angle,
and the resulting solutions are the solutions of the full Euler equations.
The fourth difficulty is that there is no trivial background solution to provide the information
on the position where the vorticity should be large.

\smallskip
Our basic strategy is to start with the smooth case, for which we do not face the first difficulty.
To overcome the second and third difficulties, by sifting the coordinates,
we develop a new and global nonlinear method to prove that there is no stagnation point without assumptions
on the sign of $\mathbb{B}''$ and $\mathbb{S}''$,
and then to make several  \textit{a priori} estimates to show that the ellipticity of the flow is independent of
the bounds of the first derivatives of $(\mathbb{B}, \mathbb{S})$
essentially,
by applying the equations for the pressure and the flow angle.
Finally, we work on the equations in the Lagrangian coordinates
to overcome the fourth difficulty to obtain the far field behavior and the uniqueness of solutions.

In addition, there are several new difficulties for the discontinuous case.
One of them is the compactness of solutions.
On one hand, we can obtain only the uniform $L^{\infty}$ estimates
of pressure $p^{\varepsilon}$ and flow angle $\theta^{\varepsilon}$
of the approximate solutions
with respect to
the regularization parameter $\varepsilon$.
On the other hand, we need the additional
information of the limit functions $(\rho,\bu, p)$ to make sense for them,
especially the regularity of the discontinuity and the traces of $(\rho, \bu, p)$ along it.
Therefore, we cannot loss any regularity in order to gain the compactness.
This is the main reason why the compensated compactness argument
is required to show that the sequence of the approximate solutions $(\rho^{\varepsilon},\bu^{\varepsilon},p^{\varepsilon})$
is compact, and the limit is a weak solution.
Moreover, since the weak solution is only $L^{\infty}$,
a better regularity than $L^{\infty}$ for the discontinuity can not been obtained directly:  In fact,
we do not know whether the discontinuity is a curve, directly from the $L^\infty$ boundedness.
Thus, we introduce a contradiction argument to show that the discontinuity of the limit is actually a Lipschitz graph,
by tracing the regularity from the approximate solutions.

There has been some earlier analysis of the infinitely long nozzle problems.
For potential flows, Chen-Feldman \cite{ChenFeldman0,ChenFeldman} first established the existence
and stability of multidimensional
transonic flows through an infinitely long nozzle of arbitrary cross-sections;
also see \cite{CDSW}.
Xie-Xin \cite{xx1} established the existence of global subsonic homentropic flows and obtained
the critical upper bound of mass flux under the assumption that the derivative of
the Bernoulli function equals to zero on the two boundaries.
Du-Xie-Xin \cite{DXX} then established the existence of global subsonic homentropic flows
for large vorticity with the assumptions on the sign of the second derivatives of the horizontal velocity
at the inlet.
Following this method, Chen \cite{CC} and Duan-Luo \cite{DL} established the existence of
subsonic non-isentropic Euler flows with large vorticity in two-dimensional nozzles and in axisymmetric nozzles, respectively,
with similar assumptions on the sign of the second derivatives.
For the steady full Euler equations,
Chen-Chen-Feldman \cite{CCF} established the first existence of global transonic flows
in two-dimensional infinitely long nozzles of slowly varying cross-sections.
Then Chen-Deng-Xiang \cite{ChenDengXiang} focused on the full Euler equations for
the infinitely long nozzle problem with general varying cross-sections by developing some useful new techniques;
also see \cite{DWX} for the existence and uniqueness of smooth subsonic flows
with nontrivial swirl in axisymmetric nozzles.
For vortex sheets, the stability of a subsonic flat contact discontinuity in nozzles
by the perturbation argument has been established in Bae \cite{Bae} and Bae-Park \cite{BP-1,BP-2}.
Some further related results can be found in \cite{BF,CKL,CR,CZZ,JC,CY,GW,HKWX,hww,Serre,Yuan}
and the references cited therein.

The organization of the paper is as follows:
In \S \ref{sec:2b}, we introduce the notions of weak solutions and corresponding discontinuities,
formulate the problem as  \textbf{Problem 2.1($m$)},
and state the main theorems, Theorems 2.1--2.2.
In \S \ref{sec:mathsetting}, we further reformulate \textbf{Problem 2.1($m$)}
into \textbf{Problem 3.1($m$)} and \textbf{Problem 3.2($m$)},
and introduce several preliminary results.
In \S \ref{sec:existence smooth},
the existence of solutions of a modified elliptic problem in any infinitely long nozzles for large vorticity is established.
Subsequently, in \S \ref{sec:uniqueness}, we show the uniqueness of the solutions obtained in \S \ref{sec:existence smooth},
then we obtain the critical mass flux in both the stationary and subsonic-sonic sense,
and finally complete the proof of the first result, \emph{i.e.} Theorem \ref{thm:smooth} for the existence of smooth solutions
with large vorticity.
In \S \ref{sec:mathsettingweak}, we prove the second result, \emph{i.e.}
Theorem \ref{thm:discontinuity} for the existence of piecewise smooth solutions, by applying the compensated compactness
argument as in \cite{ChenHuangWang} (also see \cite{Chen} and the references cited therein).
Theorems \ref{thm5.2}--\ref{thm5.3} for the subsonic-sonic limit and the incompressible limit
are proved in \S \ref{section:7b},
and Theorem \ref{thm:smoothIC} for the existence and uniqueness of solutions of the incompressible Euler equations
is proved in \S \ref{section:8b}.
In \S 9, we give some remarks on steady full Euler flows with conservative exterior force, which can be reduced to the case without the exterior force,
so that all the corresponding results in Theorems 2.1--2.2 are still valid for this case.

\section{Mathematical Formulation and Main Results}\label{sec:2b}

In this section, we formulate the nozzle problem into a mathematical problem -- \textbf{Problem 2.1($m$)},
present the basic properties of weak solutions and the requirements on the velocity and entropy function at the inlet,
and state the main theorems and related remarks.

\subsection{Mathematical formulation of the problem}
Now we formulate the nozzle problem into a mathematical problem.
The infinitely long nozzle is defined as
\begin{equation*}
\Omega=\{\bx \, :\, w_1(x_1)<x_2<w_2(x_1), \,-\infty<x_1<\infty\}
\end{equation*}
with the nozzle walls $\partial\Omega:=W_1\cup W_2$ (see Fig \ref{Fig1}),
where
\begin{equation*}
W_j=\{\bx\, :\, x_2=w_j(x_1),
~-\infty<x_1<\infty\}, \qquad j=1,2.
\end{equation*}
Suppose that $W_1$ and $W_2$ satisfy
\begin{align}
&w_2(x_1)>w_1(x_1) ~ \qquad\qquad\qquad\qquad \mbox{for} ~x_1\in(-\infty, \infty),\nonumber\\
&w_1(x_1)\rightarrow 0, \quad w_2(x_1)\rightarrow 1\qquad\qquad \mbox{as} ~x_1\rightarrow -\infty, \nonumber\\
&w_1(x_1)\rightarrow a, \quad w_2(x_1)\rightarrow b>a
  \qquad \,\, \mbox{as} ~x_1\rightarrow \infty, \label{Con:Boundary1}
\end{align}
and there exists $\alpha>0$ such that
\begin{equation}\label{Con:Boundary2}
\|w_j\|_{C^{2,\alpha}(\mathbb{R})}\leq C,
\qquad j=1,2,
\end{equation}
for some positive constant $C$.
It is clear that $\Omega$ satisfies the uniform exterior sphere condition
with some uniform radius $r>0$.

\begin{figure}[htbp]
{\small \centering
\includegraphics[width=8cm]{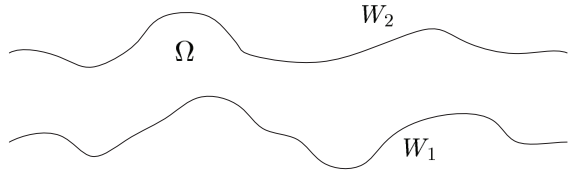}
\caption{Two-dimensional infinitely long nozzle}}
\label{Fig1}
\end{figure}

Suppose that the nozzle has impermeable solid walls so that the flow satisfies
the slip boundary condition:
\begin{equation}\label{cdx-5}
\bu \cdot\nnu =0 \qquad\, \mbox{on} ~\partial \Omega,
\end{equation}
where $\bu=(u_1, u_2)\in \mathbb{R}^2$ is the velocity and $\nnu$
is
the unit outward normal to the nozzle wall.
For the flow without vacuum, \eqref{cdx-5} can be written as
\begin{equation}\label{cdx-sc}
(\rho \bu) \cdot\nnu =0 \qquad\, \mbox{on} ~\partial \Omega.
\end{equation}

It follows from $(\ref{1.5})_1$ and $(\ref{cdx-sc})$ that
\begin{equation}\label{cdx-6}
\int_\ell\, (\rho \bu) \cdot \mathbf{n} \, \dd s\equiv m
\end{equation}
holds for some constant $m$, which is the mass flux,
where $\ell$ is any curve transversal to the $x_1$--direction,
and $\mathbf{n}$ is the normal of $\ell$ in the positive $x_1$--axis
direction.

We assume that the upstream entropy function is given as
\begin{equation}\label{cdx-8}
\frac{\gamma p}{(\gamma-1)\rho^\gamma}\longrightarrow S_-(x_2)
\qquad\, \mbox{as} ~x_1\rightarrow -\infty,
\end{equation}
and the upstream
horizontal velocity is given as
\begin{equation}\label{cdx-11}
u_1\rightarrow u_{1-}(x_2)>0
\qquad\,\mbox{as} ~x_1\rightarrow -\infty,
\end{equation}
where $S_-(x_2)$ and $u_{1-}(x_2)$ are functions defined on $[0,1]$.

\medskip
{\bf Problem 2.1($m$)}:
Solve the full Euler system $(\ref{1.5})$ with the boundary condition $(\ref{cdx-sc})$,
the mass flux condition $(\ref{cdx-6})$,
and the asymptotic conditions $(\ref{cdx-8})$--$(\ref{cdx-11})$
such that solution $(\rho, \bu, p)$ satisfies
\begin{eqnarray}
&M(\bx)<1 ~~\qquad\,\,\,\,\,\mbox{ for } a.e. \,\, \bx\in\overline{\Omega}, \label{ieq:subsonicxw}\\
&(\rho u_1)(\bx)>0 \qquad \mbox{ for } a.e. \,\, \bx\in\overline{\Omega}.\label{ieq:nondegeneracyxw}
\end{eqnarray}

\medskip
In this paper, we solve {\bf Problem 2.1($m$)}
for both smooth solutions and piecewise smooth solutions.
Such piecewise smooth subsonic flows with characteristic discontinuities also appear in many other physical problems including
the Mach reflection configurations ({\it cf.} Chen-Feldman \cite{CF-book}).

\subsection{Weak solutions and the Rankine-Hugoniot conditions}
To understand the piecewise smooth solutions,
we introduce the notion of weak solutions of the full Euler equations \eqref{1.5}.
\begin{definition}\label{def:2.1weak}
A piecewise smooth vector function $(\rho, \bu, p)$ is called a weak solution of the full Euler
equation \eqref{1.5} if the following holds{\rm :} For any $\phi\in C^1_0 (\Omega)$,
	\begin{equation}\label{weaksolution}
	\begin{cases}
	\int_{\Omega} \big(\rho u_1\,\partial_{x_1}\phi + \rho u_2\,\partial_{x_2}\phi \big)\,\dd\mathbf{x}=0,\\[2mm]
	\int_{\Omega} \big((\rho u_1^2+p)\,\partial_{x_1}\phi +  \rho u_1 u_2\, \partial_{x_2}\phi\big)\,\dd\mathbf{x}=0,\\[2mm]
	\int_{\Omega} \big(\rho u_1 u_2\, \partial_{x_1}\phi + \rho (u_2^2+p)\,\partial_{x_2}\phi \big)\,\dd\mathbf{x}=0,\\[2mm]
	\int_{\Omega} \big(\rho u_1 B\,\partial_{x_1}\phi + \rho u_2 B\,\partial_{x_2}\phi \big)\,\dd\mathbf{x}=0.
	\end{cases}
	\end{equation}
Moreover, on boundary $\partial\Omega$, the vector function $\rho\bu$ satisfies
the slip boundary condition \eqref{cdx-sc}.
\end{definition}

By integration by parts, we see that
\eqref{weaksolution} implies that the Rankine-Hugoniot conditions hold along the discontinuity curves for the
piecewise smooth solution.

\begin{proposition}
The vector function	$(\rho, \bu, p)$ is a piecewise smooth solution of the full Euler
system \eqref{1.5} if and only if
\begin{itemize}
\item[(i)] $(\rho, \bu, p)$ satisfies system \eqref{1.5} in the classic sense in
           the interior points of each smooth subregion of $\Omega$ bounded
            by a discontinuity curve $\Gamma${\rm ;}
\item[(ii)] the Rankine-Hugoniot conditions hold along $\Gamma$ almost everywhere{\rm :}
		\begin{equation}\label{R-HCondition}
		\begin{cases}
		n_1[\rho u_1]+n_2[\rho u_2]=0,\\[2mm]
		n_1[\rho u_1^2]+n_1[p]+n_2[\rho u_1 u_2]=0,\\[2mm]
		n_1[\rho u_1 u_2]+n_2[\rho u_2^2]+n_2[p]=0,\\[2mm]
		n_1[\rho u_1 B]+n_2[\rho u_2 B]=0,
		\end{cases}
		\end{equation}
where $\bn=(n_1, n_2)$ is the unit normal vector to $\Gamma$,
and $[h](\mathbf{x})=h_{+}(\mathbf{x})-h_{-}(\mathbf{x})$ denotes the jump
across the discontinuity curve $\Gamma$ for a piecewise smooth function $h$.
	\end{itemize}
\end{proposition}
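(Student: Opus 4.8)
The plan is to prove both implications by recognizing \eqref{weaksolution} as a family of distributional divergence-free conditions and then applying the divergence theorem on each smooth subregion. Introduce the flux fields $\mathbf{F}_0=\rho\bu$, $\mathbf{F}_1=(\rho u_1^2+p,\ \rho u_1 u_2)$, $\mathbf{F}_2=(\rho u_1 u_2,\ \rho u_2^2+p)$, and $\mathbf{F}_3=\rho\bu B$. Since $\rho E+p=\rho B$ by the constitutive relation and \eqref{ber}, the energy equation $(\ref{1.5})_3$ is literally $\mbox{div}_{\bx}\mathbf{F}_3=0$; hence for a $C^1$ flow the system \eqref{1.5} is exactly $\mbox{div}_{\bx}\mathbf{F}_k=0$ for $k=0,1,2,3$, while \eqref{weaksolution} reads $\int_{\Omega}\mathbf{F}_k\cdot\nabla\phi\,\dd\bx=0$ for all $\phi\in C^1_0(\Omega)$ and $k=0,1,2,3$. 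The slip condition \eqref{cdx-sc} is common to both formulations, so only the interior equations and the jump relations are at issue. By hypothesis the discontinuity curve $\Gamma$ splits $\Omega$ into two open subregions $\Omega^{\pm}$ on each of which $(\rho,\bu,p)$ extends in $C^1$ up to $\overline{\Omega^{\pm}}$; fix the unit normal $\bn$ to $\Gamma$ pointing from $\Omega^{-}$ into $\Omega^{+}$, so that the outer unit normal $\nnu^{\pm}$ of $\Omega^{\pm}$ equals $\mp\bn$ along $\Gamma$.

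For the direction (i)+(ii) $\Rightarrow$ weak solution, fix $\phi\in C^1_0(\Omega)$ and apply the divergence theorem to $\phi\,\mathbf{F}_k$ on each of $\Omega^{\pm}$:
\[
\int_{\Omega^{\pm}}\mathbf{F}_k\cdot\nabla\phi\,\dd\bx
=-\int_{\Omega^{\pm}}\phi\,\mbox{div}_{\bx}\mathbf{F}_k\,\dd\bx
+\int_{\partial\Omega^{\pm}}\phi\,\mathbf{F}_k\cdot\nnu^{\pm}\,\dd s .
\]
The bulk terms vanish by (i). Adding the two identities and using that $\phi$ has compact support in $\Omega$, the contributions along $\partial\Omega$ drop out and only those along $\Gamma$ survive; since $\nnu^{\pm}=\mp\bn$ there, they combine into $\int_{\Gamma}\phi\,(\mathbf{F}_k^{-}-\mathbf{F}_k^{+})\cdot\bn\,\dd s=-\int_{\Gamma}\phi\,[\mathbf{F}_k]\cdot\bn\,\dd s$. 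Written componentwise, $[\mathbf{F}_k]\cdot\bn$ is precisely the left-hand side of the $k$-th equation in \eqref{R-HCondition}, so this term vanishes by (ii); hence $\int_{\Omega}\mathbf{F}_k\cdot\nabla\phi\,\dd\bx=0$ for all $\phi$ and $k$, which is \eqref{weaksolution}.

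For the converse, first take $\phi\in C^1_0(\Omega^{+})$ (extended by zero to $\Omega$). Then \eqref{weaksolution} gives $\int_{\Omega^{+}}\mathbf{F}_k\cdot\nabla\phi\,\dd\bx=0$, and since $\mathbf{F}_k\in C^1(\overline{\Omega^{+}})$ an integration by parts with no boundary term yields $\int_{\Omega^{+}}\phi\,\mbox{div}_{\bx}\mathbf{F}_k\,\dd\bx=0$ for all such $\phi$, forcing $\mbox{div}_{\bx}\mathbf{F}_k\equiv0$ in $\Omega^{+}$ by continuity; the same holds in $\Omega^{-}$, which is (i). Next take an arbitrary $\phi\in C^1_0(\Omega)$ and run the identity of the previous paragraph in reverse: using (i) to kill the bulk terms and $\phi=0$ near $\partial\Omega$,
\[
0=\int_{\Omega}\mathbf{F}_k\cdot\nabla\phi\,\dd\bx
=\int_{\partial\Omega^{+}}\phi\,\mathbf{F}_k\cdot\nnu^{+}\,\dd s+\int_{\partial\Omega^{-}}\phi\,\mathbf{F}_k\cdot\nnu^{-}\,\dd s
=-\int_{\Gamma}\phi\,[\mathbf{F}_k]\cdot\bn\,\dd s .
\]
Since $\phi$ may be chosen so that $\phi|_{\Gamma}$ is an arbitrary compactly supported continuous function on $\Gamma$, we conclude $[\mathbf{F}_k]\cdot\bn=0$ a.e.\ on $\Gamma$, i.e.\ \eqref{R-HCondition}.

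The point I would be most careful about is the legitimacy of the divergence theorem on $\Omega^{\pm}$: it requires $\Gamma$ to be at least Lipschitz, so that the normal $\bn$ and the two one-sided traces of the $C^1$ fluxes exist and are integrable along $\Gamma$, and it requires the matching of orientations $\nnu^{+}=-\nnu^{-}$ on the shared boundary in order to produce a genuine jump with the sign convention $[h]=h_{+}-h_{-}$. All of this is exactly what is built into the meaning of \emph{piecewise smooth} in Definition \ref{def:2.1weak}; once that is granted, the argument is purely a bookkeeping of boundary terms, with no PDE estimate involved, and since it is local it covers the case of several discontinuity curves as well.
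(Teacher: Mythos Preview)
Your proof is correct and is precisely the integration-by-parts argument the paper has in mind; the paper does not spell out a proof beyond the single remark ``By integration by parts, we see that \eqref{weaksolution} implies that the Rankine-Hugoniot conditions hold along the discontinuity curves for the piecewise smooth solution,'' so your write-up simply fills in the standard details of that argument.
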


Next, we classify the types of discontinuities.
First, we introduce $\bt=(\tau_1, \tau_2)$ as the unit tangential to $\Gamma$,
which means that $\bn\cdot\bt=0$.
Taking the dot product of ($(\ref{R-HCondition})_2$, $(\ref{R-HCondition})_3$) with $\bn$ and $\bt$ respectively,
we have
\begin{eqnarray}
&& [\rho(\bu\cdot\bn)^2+p]=0,\\
&& [\rho(\bu\cdot\bn)(\bu\cdot\bt)]=0.\label{RHTrans}
\end{eqnarray}
Then \eqref{RHTrans} can be rewritten as
\begin{equation*}
(\rho \bu\cdot\bn)_{\pm}[\bu\cdot\bt]=0,
\end{equation*}
where $(\rho \bu\cdot\bn)_{\pm}$ are the traces of $\rho \bu\cdot\bn$ on $\Gamma$ from $\Omega_{\pm}$, respectively.

If $(\rho \bu\cdot\bn)_{\pm}\neq 0$  on $\Gamma$, then  $\Gamma$ is a shock, provided that it also satisfies
the entropy condition: The density increases across $\Gamma$ in the flow direction.

If $(\rho \bu\cdot\bn)_{\pm}=0$, then $\Gamma$ is a characteristic discontinuity.
In this case, $[p]=0$.
In particular, $\Gamma$ is a vortex sheet when $[B]\neq0$ and $[S]=0$, and an entropy wave when $[B]=0$ and $[S]\neq0$.
Following the argument in \cite[page 302--303]{CFr} for the Prandtl formula,
it can be seen that only possible waves for the subsonic flows are vortex sheets and entropy waves.

\subsection{Requirements on the velocity and entropy function at the inlet}
Before stating the main results of this paper,
we analyze some requirements on $(u_{1-}, S_-)$ for a solution of {\bf Problem 2.1($m$)} to exist.
There are two main results. The first is the existence and uniqueness of smooth solutions with large
vorticity when  $(u_{1-}, S_-)\in C^{1,1}$,
and the second is the existence and uniqueness of piecewise smooth solutions with jumps.
Therefore, there are two kinds of the requirements for $(u_{1-}, S_-)$.

\subsubsection{Requirements on the smooth velocity and entropy function}
Motivated by \cite{ChenDengXiang}, we require that $(u_{1-}, S_{-})\in C^{1,1}[0,1]$
with
\begin{equation}\label{assumption:3.40}
\inf\limits_{s\in[0,1]}u_{1-}(s)>0,\qquad\,\,
\inf\limits_{s\in[0,1]}S_{-}(s)>0.
\end{equation}
On the boundary, we need the following monotone properties for $(u_{1-}, S_-)$:
\begin{equation}\label{assumption:3.47a}
(u_{1-}^2S_-^{-\frac{1}{\gamma}})'(0)\leq0,
\qquad\,\,
(u_{1-}^2S_-^{-\frac{1}{\gamma}})'(1)\geq0.
\end{equation}

\subsubsection{Requirements on the functions with jumps}
In this case, we assume that the number of the discontinuities is finite,
such that the characteristic discontinuities are separated from each other.
Without loss of generality, we assume that $(u_{1-}, S_-)$ have only one discontinuity at $x_2=x_d$.
Similar to the smooth case, we require
\begin{equation}\label{assumption:3.44dis}
(u_{1-}, S_{-})\in (C^{1,1}[0,x_d)\cup C^{1,1}(x_d,1])^2.
\end{equation}
On the boundary, we require the monotone assumption \eqref{assumption:3.47a}.
Moreover, near the discontinuity, $(u_{1-}, S_-)$ satisfy either
\begin{equation}\label{assumption:4.2}
\begin{cases}
\inf\limits_{x_{d}-\varepsilon_0<s<x_{d}}(u_{1-}^2S_-^{-\frac{1}{\gamma}})'(s)\geq0,
\qquad \inf\limits_{x_{d}-\varepsilon_0<s<x_{d}}S_{-}'(s)\geq0,\\[2mm]
[u_{1-}^2S_-^{-\frac{1}{\gamma}}]\geq0, \qquad [S_{-}]\geq0,
\end{cases}
\end{equation}
or
\begin{equation}\label{assumption:4.3}
\begin{cases}
\sup\limits_{x_{d}<s< x_{d}+\varepsilon_0}(u_{1-}^2S_-^{-\frac{1}{\gamma}})'(s)\leq0,
\qquad \sup\limits_{x_{d}<s<x_{d}+\varepsilon_0}S_{-}'(s)\leq0,\\[2mm]
[u_{1-}^2S_-^{-\frac{1}{\gamma}}]\leq0,
\qquad [S_{-}]\leq0.
\end{cases}
\end{equation}

We note that at least one of the inequalities, $\geq$ or $\leq$, in \eqref{assumption:4.2} or \eqref{assumption:4.3}
must be strict, \emph{i.e.} $>$ or $<$.
Otherwise, it reduces to the first case, since there is no discontinuity.

\begin{remark}
For the homentropic case, $S=const.$ and $u_{1-}'=\omega_-$ that is the vorticity.
Then the sign conditions in
\eqref{assumption:3.47a} and \eqref{assumption:4.2}--\eqref{assumption:4.3} are the corresponding sign conditions on the inlet vorticity $\omega_-$.
\end{remark}

\subsection{Main theorems}

In this paper, we establish the following two theorems.
The first is the global existence and uniqueness of smooth solutions with large vorticity
for the given smooth horizontal velocity $u_{1-}$ and entropy function $S_-$ at the inlet,
and the second is the global existence and uniqueness of weak solutions
with characteristic discontinuities.

\begin{theorem}[Smooth solutions with large vorticity]\label{thm:smooth}
For any given entropy function $S_{-}$ and horizontal velocity $u_{1-}$ at the inlet
satisfying \eqref{assumption:3.40}--\eqref{assumption:3.47a},
there exists a critical mass flux $m_{\rm c}>0$ that depends only on
$(u_{1-}, S_-)$ and $(w_1,w_2)$
{\rm (}the boundary of the nozzle{\rm )}
such that, for any given $m>m_{\rm c}$,
there exists a solution $(\rho,\bu,p)\in (C^{1,\alpha}(\Omega))^4$ of {\bf Problem 2.1($m$)}.
Moreover, the solution is unique with the additional properties\,{\rm :}
\begin{eqnarray}
&& p\rightarrow p_{\pm}>0, \quad u_1\rightarrow u_{1\pm}(x_2)>0,
\quad (u_2,\rho)\rightarrow(0,\rho_{\pm}(x_2;p_{\pm})), \label{equ:2.59a}\\
&& \nabla p\rightarrow0,\quad\nabla u_1\rightarrow(0,u_{1\pm}'(x_2)), \quad \nabla u_2\rightarrow (0,0),
  \quad \nabla \rho\rightarrow(0,\rho'_0(x_2;p_{\pm})), \label{equ:2.60a}
\end{eqnarray}
uniformly for $x_2\in K\Subset(0,1)$ as $x_1\rightarrow-\infty$
and for $x_2\in K\Subset(a,b)$ as $x_1\rightarrow\infty$.
The asymptotic states $\rho_{\pm}(x_2)$, $u_{1+}(x_2)$, and $p_{\pm}$ are determined
in \S {\rm \ref{subset:asymptotic state}} and \S {\rm \ref{subset:asymptotic state plus}} later.
In addition,
\begin{equation}\label{2.61b}
\min_{\partial\Omega} p\le p \le \max_{\partial\Omega} p,
\end{equation}
and the flow angle $\theta$ defined by $\tan\theta=\frac{u_2}{u_1}$
satisfies
\begin{equation}\label{equ:2.61a}
|\theta|\leq \theta_B,
\end{equation}
where $\theta_B:=\max_{W_1\cup W_2}\theta$.
Finally, the critical mass $m_{\rm c}$ satisfies that
either
\begin{equation*}
\sup_{\overline{\Omega}}(|\bu|^2-c^2)\rightarrow0 \,\qquad\mbox{as $m>m_{\rm c}$ and $m\rightarrow m_{\rm c}$},
\end{equation*}
or, for any $\sigma>0$,
there exists $m\in(m_{\rm c}-\sigma,m_{\rm c})$ such that there is a full Euler flow satisfying
\begin{equation}\label{equ:2.63a}
\sup_{m_{\rm c}-\sigma<m<m_{\rm c}}\sup_{\overline{\Omega}}(|\bu|^2-c^2)>0.
\end{equation}
\end{theorem}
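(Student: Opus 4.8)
The plan is to recast \textbf{Problem 2.1($m$)} as a single quasilinear second-order equation for the stream function, solve it by a truncation–compactness scheme, and then read off the far-field behavior and uniqueness from a Lagrangian-coordinate reformulation. First I would introduce the stream function $\psi$ with $\nabla^{\perp}\psi=\rho\bu$ (so that $\psi_{x_2}=\rho u_1$, $\psi_{x_1}=-\rho u_2$); then $(\ref{1.5})_1$ holds identically, the walls $W_1,W_2$ are streamlines by \eqref{cdx-sc}, and \eqref{cdx-6} normalizes them to $\psi=0$ on $W_1$ and $\psi=m$ on $W_2$. Since $B$ and $S$ are transported along streamlines by \eqref{1.9}, they become functions $\mathbb{B}(\psi),\mathbb{S}(\psi)$ of $\psi$ alone, determined from $(u_{1-},S_-)$ and the yet-unknown inlet pressure $p_-$ (fixed by requiring the inlet mass flux to equal $m$); eliminating $(\rho,p)$ via Bernoulli's law \eqref{ber} writes $\rho=\rho(|\nabla\psi|^2,\psi;p_-)$, using \eqref{assumption:3.40} for non-degeneracy, and converts the remaining momentum–energy equations into one equation $\mathrm{div}\big(\rho^{-1}\nabla\psi\big)=\mathbb{B}'(\psi)(\cdots)+\mathbb{S}'(\psi)(\cdots)$ that is elliptic exactly where the flow is subsonic. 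In the smooth regime this sidesteps the first two difficulties mentioned in the introduction.

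Next, I would cut off $\rho$ away from the sonic value and from vacuum to make the equation uniformly elliptic, solve the modified problem on the truncated nozzles $\Omega_L=\Omega\cap\{|x_1|<L\}$ with the Dirichlet data $\psi=0,m$ on the walls and the one-dimensional asymptotic profiles at $x_1=\pm L$, and get solvability by the Leray–Schauder theorem (this is \S\ref{sec:existence smooth}). The core is a family of \emph{a priori} estimates uniform in $L$ \textit{and} independent of the size of $\mathbb{B}',\mathbb{S}'$: the bound $0\le\psi\le m$ by the maximum principle, interior and boundary gradient bounds (the $C^{2,\alpha}$ walls and the exterior sphere condition upgrading these to $C^{1,\alpha}(\overline{\Omega_L})$), and—crucially—the exclusion of stagnation points, $\rho u_1>0$. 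To get the last one without any sign hypothesis on $\mathbb{B}'',\mathbb{S}''$, I would argue not on $\psi$ but on the pressure $p$ and the flow angle $\theta$, which solve a first-order system along and across the streamlines; a maximum-principle analysis of that system, using \eqref{assumption:3.47a} only to pin down the sign of $\theta$ at the walls, yields simultaneously \eqref{2.61b}, \eqref{equ:2.61a} and a strictly positive lower bound for $\rho u_1$. These same estimates show that once $m$ exceeds a threshold determined by $(u_{1-},S_-,w_1,w_2)$ the density stays strictly between vacuum and sonic values, so the cut-off is inactive and the modified solution solves the genuine Euler system; I define $m_{\mathrm c}$ to be the infimum of such thresholds.

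Letting $L\to\infty$ and using the uniform $C^{1,\alpha}_{\mathrm{loc}}$ bounds, a diagonal subsequence converges to a solution $(\rho,\bu,p)\in(C^{1,\alpha}(\Omega))^4$ of \textbf{Problem 2.1($m$)}. For the far-field limits \eqref{equ:2.59a}–\eqref{equ:2.60a} and for uniqueness I would pass to the Lagrangian coordinates $(x_1,\psi)$, in which the problem lives on the fixed strip $\R\times(0,m)$; translation invariance of the limiting one-dimensional states combined with an energy/Liouville-type argument gives the decay of $(p,\theta)$ and their first derivatives to those states as $x_1\to\pm\infty$, with $p_\pm$ fixed by matching the mass flux $m$ and $u_{1+}(x_2)$ determined by the conservation of $\mathbb{B},\mathbb{S}$ along streamlines together with the change of cross-section. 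Uniqueness then follows by taking the difference of two solutions with the same data and the same $m$: it satisfies a homogeneous linear elliptic problem on the strip, and the just-established decay kills the boundary terms at infinity in the energy identity, forcing it to vanish. Finally, for the dichotomy: if $\sup_{\overline\Omega}(|\bu|^2-c^2)$ does \textit{not} tend to $0$ as $m\downarrow m_{\mathrm c}$, then the solutions stay uniformly subsonic along some sequence $m_k\downarrow m_{\mathrm c}$, the scheme runs unchanged for $m$ slightly below $m_{\mathrm c}$ and produces an Euler flow there, and by the definition of $m_{\mathrm c}$ that flow must fail to be everywhere subsonic, which is precisely \eqref{equ:2.63a}.

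I expect the main obstacle to be the no-stagnation-point estimate, together with the requirement that all \emph{a priori} bounds be independent of the (possibly large) Lipschitz norms of $\mathbb{B}$ and $\mathbb{S}$. The classical comparison argument for $\rho u_1>0$ in \cite{DXX,CC,DL} relies on $\mathbb{B}'',\mathbb{S}''$ having a fixed sign; removing that hypothesis forces the genuinely new ingredient here—the coordinate shift followed by a maximum principle for the $(p,\theta)$ system—and making that argument close up globally in an \textit{arbitrary} infinitely long nozzle, rather than a nearly flat one, is the delicate step on which everything else rests.
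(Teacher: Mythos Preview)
Your overall architecture matches the paper's: stream-function reduction, elliptic cut-off, solve on $\Omega_L$, uniform estimates, pass $L\to\infty$, then Lagrangian coordinates for the far-field behavior and uniqueness, and a Bers-type continuation argument for the dichotomy at $m_{\rm c}$. The serious gap is in your proposed route to non-stagnation. You plan to extract $\rho u_1>0$ from a maximum-principle analysis of the $(p,\theta)$ system, but that system is not even well-defined until you already know $q>0$: the flow angle $\theta=\arctan(u_2/u_1)$ is meaningless at a stagnation point, and the equations you would write down (the paper's \eqref{5.35}, \eqref{5.35a}--\eqref{5.35b}) carry factors of $q^{-2}$. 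In the paper, Lemma~\ref{lem:5.4qmax} is applied \emph{after} Corollary~\ref{cor:5.10speed} guarantees $q>0$, not before. So the argument as you describe it is circular.

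What actually replaces the sign hypothesis on $\mathbb{B}'',\mathbb{S}''$ is a sliding argument on $\psi$ itself (Lemma~\ref{lem:4.12}, Step~2): extend $\psi$ by $0$ and $m$ outside the nozzle, set $\psi_\varepsilon(x_1,x_2)=\psi(x_1,x_2+\varepsilon)$, observe $\psi<\psi_\varepsilon$ for large $\varepsilon$, slide $\varepsilon$ down to a first touching value, and rule out interior contact by Hopf's lemma applied to the difference $W=\psi_{\varepsilon_0}-\psi$ (with a weight to force the zeroth-order coefficient negative). This gives $\partial_{x_2}\psi\ge 0$; strict positivity then follows from the strong maximum principle applied to the linear equation satisfied by $\mathcal{U}=\partial_{x_2}\psi$. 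Only \emph{after} this does the paper derive the $(p,\theta)$ equations and use them, first for the bounds \eqref{2.61b}--\eqref{equ:2.61a}, and then (Lemma~\ref{lem:6.5q}) to convert the pressure lower bound into uniform subsonicity independent of $\|\mathbb{B}'\|,\|\mathbb{S}'\|$. One smaller misattribution: assumption \eqref{assumption:3.47a} is not used to fix the sign of $\theta$ on the walls (that is pure geometry via the slip condition); it enters earlier, in \S\ref{subsubsec:5.2.1}, to control the sign of the source term $\tilde f$ at interior extrema so that the maximum principle yields $0<\psi_L<m$.
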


\begin{remark}
In \cite{ChenDengXiang,xx1},
the existence of smooth subsonic flows is proved under some smallness assumptions of $(u_{1-}, S_{-})$.
In this paper, all the smallness assumptions of \cite{ChenDengXiang,xx1} are removed.
\end{remark}

\begin{remark}
In \cite{DXX}, a convexity condition that the second derivative of $u_{1-}$ is non-negative
is essential in the analysis. In this paper, the convexity condition is removed.
Note that the vorticity is a Dirac measure for the characteristic discontinuity case,
so that the sign of the second derivative of the velocity of the approximate solutions can not be kept.
In particular, the convexity condition is not needed in Theorem {\rm \ref{thm:smooth}},
which is essential for us to investigate the weak solutions with characteristic discontinuities.
\end{remark}

\begin{theorem}[Weak solutions with characteristic discontinuities]\label{thm:discontinuity}
For any given entropy function $S_{-}$ and horizontal velocity $u_{1-}$ at the inlet,
which satisfy assumptions \eqref{assumption:3.44dis}--\eqref{assumption:4.3},
there exists a non-negative critical mass flux $m_{\rm c}$, depending only
on $(u_{1-}, S_-)$ and $(w_1,w_2)$ {\rm (}the boundary of the nozzle{\rm )},
such that, for any given $m>m_{\rm c}$, there is a weak solution $(\rho,\bu,p)$ of {\bf Problem 2.1($m$)}.
Moreover,
the weak solution $(\rho, \bu,p)$ is unique and piecewise $(C^{1,\alpha})^4$ in $\Omega$.
The solution satisfies properties \eqref{equ:2.59a}$-$\eqref{equ:2.60a}
uniformly for $x_2\in K\Subset(0,1)$ as $x_1\rightarrow-\infty$ and for $x_2\in K\Subset(a,b)$ as $x_1\rightarrow\infty$
away from the discontinuity.
The discontinuity is a streamline with the Lipschitz regularity.
The solutions also satisfy the properties in \eqref{2.61b}--\eqref{equ:2.61a}.
The critical mass flux $m_{\rm c}$
satisfies that
either
\begin{equation*}
\sup_{\overline{\Omega}}\big(|\bu|^2-c^2)\rightarrow0,\qquad\mbox{as }~m>m_{\rm c}, ~m\rightarrow m_{\rm c},
\end{equation*}
or, for any $\sigma>0$, there is $0<\varepsilon_0\le 1$ such that, for any $\v\in (0, \varepsilon_0)$, there exists $m\in(m_{\rm c}-\sigma, m_{\rm c})$
so that there is a full Euler flow satisfying
\begin{equation*}
\sup_{m\in(m_{\rm c}-\sigma,m_{\rm c})}\sup_{\overline{\Omega}}\big(|\bu^\varepsilon|^2-(c^\varepsilon)^2\big)>0,
\end{equation*}
where $(\rho^{\varepsilon},\bu^{\varepsilon},p^{\varepsilon})$ is the solution corresponding to $(u_-^{\varepsilon}, S_-^{\varepsilon})$
satisfying the assumptions of Theorem {\rm \ref{thm:smooth}}, converging to $(u_-, S_-)$ pointwise in $[0,1]$ and $C^{1,1}$
away from the jump point $x_2=x_d$, with the mass flux $m$.
Finally, across the discontinuity, the solution satisfies
\begin{equation}\label{equ:2.28xw}
(\rho \bu\cdot\bn)_{\pm}=0 \,\qquad \mbox{on }\Gamma
\end{equation}
as normal traces in the sense of Chen-Frid \cite{Chen7},
which means the characteristic discontinuity $\Gamma$ is either a vortex sheet or an entropy wave.
\end{theorem}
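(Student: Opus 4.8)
The plan is to obtain the weak solution of Theorem~\ref{thm:discontinuity} as an a.e.\ limit of the smooth solutions furnished by Theorem~\ref{thm:smooth}. First I would \emph{regularize the inlet data}: mollify $(u_{1-}, S_-)$ in a shrinking neighborhood of the jump point $x_2 = x_d$ to produce $(u_{1-}^\varepsilon, S_-^\varepsilon) \in (C^{1,1}[0,1])^2$ converging to $(u_{1-}, S_-)$ pointwise on $[0,1]$ and in $C^{1,1}$ away from $x_d$. Using the boundary assumption \eqref{assumption:3.47a} together with the one-sided monotonicity and sign-of-jump conditions \eqref{assumption:4.2}--\eqref{assumption:4.3}, these regularizations can be arranged so that $\big((u_{1-}^\varepsilon)^2 (S_-^\varepsilon)^{-\frac{1}{\gamma}}\big)'$ has the required signs at $x_2 = 0,1$ and $(u_{1-}^\varepsilon, S_-^\varepsilon)$ satisfies all hypotheses of Theorem~\ref{thm:smooth}; the role of \eqref{assumption:4.2}--\eqref{assumption:4.3} is precisely that the mollified profile inherits a definite monotonicity across the whole interval $(x_d-\varepsilon_0, x_d+\varepsilon_0)$. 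Theorem~\ref{thm:smooth} then yields, for each $\varepsilon$, a smooth subsonic solution $(\rho^\varepsilon, \bu^\varepsilon, p^\varepsilon) \in (C^{1,\alpha}(\Omega))^4$ of \textbf{Problem 2.1($m$)} for every $m$ above a critical value $m_{\rm c}^\varepsilon$; after showing the $m_{\rm c}^\varepsilon$ are uniformly bounded, I would set $m_{\rm c} := \limsup_{\varepsilon\to 0} m_{\rm c}^\varepsilon \ge 0$, so that for fixed $m > m_{\rm c}$ the approximate solutions exist for all small $\varepsilon$.

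Second, I would \emph{collect uniform-in-$\varepsilon$ bounds}. From \eqref{2.61b} the $p^\varepsilon$ are bounded above and below by the boundary pressures, and from \eqref{equ:2.61a} the flow angles $\theta^\varepsilon$ are bounded by $\theta_B$; both bounds are $\varepsilon$-independent since they depend only on the data and the walls. Combined with the mass-flux normalization and the no-stagnation estimates built into the smooth theory --- which, crucially, are independent of the blowing-up first derivatives of the Bernoulli and entropy profiles $\mathbb{B}^\varepsilon, \mathbb{S}^\varepsilon$ along streamlines --- this gives uniform $L^\infty$ bounds on $(\rho^\varepsilon, \bu^\varepsilon, p^\varepsilon)$ and a uniform ellipticity gap $|\bu^\varepsilon|^2 - (c^\varepsilon)^2 \le -\delta_0 < 0$ on compact subsets. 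Because we can afford to lose no regularity (we need traces of $(\rho, \bu, p)$ along the eventual discontinuity to make sense), I would then invoke the \emph{compensated compactness} method as in \cite{ChenHuangWang} (see also \cite{Chen}): the four conservation laws in \eqref{weaksolution}, written for $(\rho^\varepsilon, \bu^\varepsilon, p^\varepsilon)$, provide families of divergence-form fields with controlled curls, and a div--curl argument together with the structure of the steady full Euler system forces the associated Young measure to be a Dirac mass. Hence $(\rho^\varepsilon, \bu^\varepsilon, p^\varepsilon) \to (\rho, \bu, p)$ a.e.\ along a subsequence, and the limit is subsonic a.e., satisfies \eqref{weaksolution} and the slip condition \eqref{cdx-sc}, and is therefore a weak solution of \textbf{Problem 2.1($m$)}.

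Third, I would \emph{identify the discontinuity and finish}. For each $\varepsilon$ the variation of $(\mathbb{B}^\varepsilon, \mathbb{S}^\varepsilon)$ concentrates, as $\varepsilon\to 0$, on the streamline $\Gamma^\varepsilon = \{x_2 = g^\varepsilon(x_1)\}$ carrying the stream-function value through $(-\infty, x_d)$; since the $\bu^\varepsilon$ are uniformly subsonic, have uniformly bounded angle, and have no stagnation point, the $g^\varepsilon$ are uniformly Lipschitz. The contradiction argument announced in the introduction --- supposing the jump set of the limit is not a single Lipschitz graph and tracing its location back through the $g^\varepsilon$ using the interior $C^{1,\alpha}$ bounds of the smooth solutions off $\Gamma^\varepsilon$ --- then shows that the discontinuity $\Gamma$ of $(\rho, \bu, p)$ is a Lipschitz streamline, that the convergence is $C^{1,\alpha}_{\rm loc}$ away from $\Gamma$ (yielding the piecewise $(C^{1,\alpha})^4$ regularity and, via the Lagrangian-coordinate analysis of the smooth case, the far-field behavior \eqref{equ:2.59a}--\eqref{equ:2.60a}), and that $(\rho\bu\cdot\bn)_\pm = 0$ on $\Gamma$ in the Chen--Frid normal-trace sense \cite{Chen7}, i.e.\ \eqref{equ:2.28xw} holds. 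The Rankine--Hugoniot relations \eqref{R-HCondition} then force $[p]=0$, and the Prandtl-type argument of \cite[pp.~302--303]{CFr}, valid for subsonic flows, shows $\Gamma$ is a vortex sheet or an entropy wave. Uniqueness follows from the uniqueness of the smooth pieces (again via Lagrangian coordinates) together with the fact that $\Gamma$ is pinned to the streamline emanating from $x_d$; the dichotomy for $m_{\rm c}$, and the bounds \eqref{2.61b}--\eqref{equ:2.61a}, pass to the limit from the $\varepsilon$-problems.

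The hard part will be the combination of the last two steps: securing genuine a.e.\ compactness \emph{without losing any regularity}, so that the traces of $(\rho, \bu, p)$ along the discontinuity are meaningful, and then upgrading the a priori merely-$L^\infty$ jump set of the limit to a genuine Lipschitz streamline by tracing regularity back through the approximate solutions. Both the delicate structure required in the compensated compactness argument and the contradiction argument for the regularity of $\Gamma$ are where the $L^\infty$-only nature of the weak solution really bites.
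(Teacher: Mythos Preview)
Your three-step architecture---regularize, apply Theorem~\ref{thm:smooth}, pass to the limit via compensated compactness, then identify the discontinuity---is exactly the paper's, and your account of the uniform $L^\infty$ bounds on $p^\varepsilon,\theta^\varepsilon$ and of the compensated compactness step is correct. But you misplace the role of \eqref{assumption:4.2}--\eqref{assumption:4.3}. They are \emph{not} needed to make the mollified data satisfy the hypotheses of Theorem~\ref{thm:smooth}: the paper's mollifier $\mathcal{M}_\varepsilon$ acts only on $[\varepsilon_0,1-\varepsilon_0]$, so the boundary sign conditions \eqref{assumption:3.47a} at $x_2=0,1$ survive automatically, and Theorem~\ref{thm:smooth} imposes nothing near $x_d$. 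Conditions \eqref{assumption:4.2}--\eqref{assumption:4.3} enter only later, through Lemma~\ref{lem:3.1xw}, to force the limit vorticity $\bar\omega=-\bar\rho\,\mathbb{B}'+\gamma^{-1}\bar\rho^{\gamma}\,\mathbb{S}'$ to have a definite sign on one side of $\{\bar\psi=m_d\}$; that sign is the actual engine of the contradiction argument.

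This matters because your version of the contradiction argument---tracing the jump set back through uniformly Lipschitz approximate streamlines $g^\varepsilon$---does not obviously close. The difficulty is that $\partial_{x_2}\psi^\varepsilon$ has no uniform positive lower bound near $\{\psi^\varepsilon\approx m_d\}$, so even if the $g^\varepsilon$ converge to a single Lipschitz curve, the limit level set $\Omega(m_d)=\{\bar\psi=m_d\}$ may \emph{a priori} be ``fat'' (have nonempty interior, on which $\bar\bu=0$). The paper rules this out by working directly with the limit: it sandwiches $\Omega(m_d)$ between the Lipschitz curves $\Gamma_\pm=\lim_{m\to m_d\pm}\{\bar\psi=m\}$, then pairs the vorticity measure $\bar\omega$ against a two-parameter bump $\eta$ straddling $\Gamma_-$, computed in Lagrangian coordinates; the sign of $\bar\omega$ coming from \eqref{assumption:4.2}--\eqref{assumption:4.3} yields $|\langle\bar\omega,\eta\rangle|\ge C^{-1}|\Sigma_-|$, which contradicts the upper bound obtained by integration by parts when $\Omega(m_d)$ has interior. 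Your uniqueness sketch is also slightly off: the paper does not patch together uniqueness of smooth pieces but runs a single weak-form energy estimate for $\hat\varphi=\varphi^{(1)}-\varphi^{(2)}$ on $\mathbb{R}\times[0,m]$ in Lagrangian coordinates, handling the degenerate ellipticity near $z_2=m_d$ via the $L^1$ bound $\int_0^m\partial_{z_2}\varphi\,dz_2\le\max_{x_1}(w_2-w_1)(x_1)$.
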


\begin{remark}
There are several previous results on the global existence of piecewise smooth solutions of multidimensional steady compressible Euler
equations in the literature.
All of these results are based on the perturbation analysis around a piecewise constant background solution {\rm (}cf. \cite{Bae,BP-1,BP-2,ChenS,HKWX}{\rm )}.
As far as we know, the result in Theorem {\rm \ref{thm:discontinuity}} is the first
on the global existence of piecewise smooth solutions
of multidimensional steady compressible Euler equations,
which are not necessarily a perturbation of piecewise constat background solutions.
\end{remark}

\begin{remark}
Based on two main theorems above, we further establish Theorem {\rm \ref{thm5.2}} for the subsonic-sonic limit,
Theorem {\rm \ref{thm5.3}} for the incompressible limit, and Theorem {\rm \ref{thm:smoothIC}} for
the existence and uniqueness of solutions of inhomogeneous incompressible Euler flows.
\end{remark}

We develop a nonlinear approach based on Theorem \ref{thm:smooth} to obtain Theorem \ref{thm:discontinuity}.
Therefore, we will prove Theorem \ref{thm:smooth} first in \S \ref{sec:mathsetting}--\S \ref{sec:uniqueness}
and then Theorem \ref{thm:discontinuity} in \S \ref{sec:mathsettingweak}.

\section{Mathematical Reformulation and Preliminary Results}\label{sec:mathsetting}

In this section, we reformulate \textbf{Problem 2.1($m$)} as two mathematical problems
for the stream function $\psi$
with corresponding boundary conditions and additional properties
for smooth subsonic flow and piecewise smooth subsonic flow, respectively,
and discuss some basic properties of solutions of the two problems.

\subsection{Asymptotic states at the inlet}
\label{subset:asymptotic state}
We first introduce the states at the inlet.

By the definition of Bernoulli function, entropy function, and mass flux, for a fixed constant $p_-$, we denote
\begin{eqnarray}
&& B_-(x_2):= \frac{1}{2}|u_{1-}(x_2)|^2+\frac{\gamma p_{-}}{(\gamma-1)\rho_-(x_2)},
   \label{Ber--infty}\\
&& S_-(x_2):=\frac{\gamma p_-}{(\gamma-1)\rho_-^\gamma(x_2)},
   \label{entropy--infty}\\
&& m=\int_0^1(\rho_-u_{1-})(x_2)\, \dd x_2.
  \label{mass--infty}
\end{eqnarray}

The above formulas imply
$$
\rho_-(x_2, p_-)=\Big(\frac{\gamma p_-}{(\gamma-1) S_-(x_2)}\Big)^{\frac{1}{\gamma}}.
$$
Then
\begin{equation*}
	m=\int_0^1(\rho_-u_{1-})(x_2)\, \dd x_2
	=\Big(\frac{\gamma p_-}{\gamma-1}\Big)^{\frac{1}{\gamma}}\int_0^1(u_{1-}S_-^{-\frac{1}{\gamma}})(x_2)\, \dd x_2.
\end{equation*}
For given $(u_{1-}, S_-)(x_2)$ and $m$, we have
\begin{equation}\label{3.5}
p_{-}=\frac{\gamma-1}{\gamma}m^\gamma\Big(\int_0^1(u_{1-}S_-^{-\frac{1}{\gamma}})(x_2) \, \dd x_2\Big)^{-\gamma}.
\end{equation}
Then, by the Bernoulli law \eqref{Ber--infty},
\begin{equation}\label{equ:2.14a}
	B_-(x_2)
	=\frac{1}{2}|u_{1-}(x_2)|^2
+ m^{\gamma-1}S_-^{\frac{1}{\gamma}}(x_2) \Big(\int_0^1(u_{1-}S_-^{-\frac{1}{\gamma}})(x_2)\, \dd x_2\Big)^{1-\gamma}.
\end{equation}

We require $u_{1-}<q_{\rm cr}$ so that the flow is subsonic, where
$$
q_{\rm cr}=\Big(\frac{2(\gamma-1)}{\gamma+1}B_-\Big)^{\frac{1}{2}}.
$$
Set
$$
\hat{m}:=(\gamma-1)^{-\frac{1}{\gamma-1}}\Big(\max_{x_2\in[0,1]} (u_{1-}^2 S_-^{-\frac{1}{\gamma}})(x_2) \Big)^{\frac{1}{\gamma-1}}
   \int_0^1(u_{1-} S_-^{-\frac{1}{\gamma}})(x_2) \dd x_2
$$
so that $u_{1-}<q_{\rm cr}$ holds when $m>\hat{m}$.

\begin{remark}
The above argument allows $(u_{1-}, S_-)$ to be piecewise smooth.
\end{remark}

\subsection{Reformulations of {\bf Problem 2.1($m$)}}\label{subsec:transport equations}
If there is a smooth solution for {\bf Problem 2.1($m$)},
then, from the first equation in \eqref{1.5},
there exists a stream function $\psi$ satisfying
\begin{equation}\label{psi}
\partial_{x_1}\psi=-\rho u_2,\qquad
\partial_{x_2}\psi=\rho u_1.
\end{equation}
Note that the stream function $\psi$ is identically equal to a constant along the characteristic.
Moreover, by \eqref{cdx-6},
$\psi=m$ on the boundary $x_2=w_2(x_1)$, if $\psi=0$ on the boundary $x_2=w_1(x_1)$.

From (\ref{1.9}), there are two linear transport equations corresponding to the linearly degenerate characteristic fields.
These equations indicate that the Bernoulli function $B$ and entropy function $S$ are constant along the streamlines.
In fact,
$(B,S)$ can be regarded as functions of the stream function $\psi$ via the following argument:

If $\partial_{x_2} \psi=\rho u_1$ does not vanish, we can introduce the Bernoulli function $\mathbb{B}$
and entropy function $\mathbb{S}$, respectively, with respect to the stream function $\psi$
as $(\mathbb{B}, \mathbb{S})(\psi):=(B_-, S_-)\circ\psi_-^{-1}(\psi)$,
where
\begin{equation}\label{psi-ini}
\psi_-(x_2)=\int_0^{x_2}(\rho_-u_{1-})(y)\, \dd y.
\end{equation}
Then
\begin{equation}\label{equ:2.3mathbb}
(B,S)(\bx)=(\mathbb{B},\mathbb{S})(\psi(\bx)),
\end{equation}
and
$$
	\nabla B=\rho \mathbb{B}'(-u_2, u_1),\qquad
	\nabla S=\rho \mathbb{S}'(-u_2, u_1).
$$

Another notable quantity is the vorticity function $\omega:=\partial_{x_1}u_2-\partial_{x_2}u_1$.
We now see the connection between $(\rho,\, \mathbb{S},\, \mathbb{B})$ and $\omega$.
Differentiating the Bernoulli function $B$ with respect to $x_i$, $i=1,2$, we have
\begin{equation}\label{1.11}
	\partial_{x_1} B=  u_2 \omega+\frac{\rho^{\gamma-1}}{\gamma}\partial_{x_1} S,\qquad
	\partial_{x_2} B=  - u_1 \omega+\frac{\rho^{\gamma-1}}{\gamma}\partial_{x_2} S,
\end{equation}
which yields
$$
\omega=\frac{1}{q^2}\Big(u_2(\partial_{x_1} B-\frac{\rho^{\gamma-1}}{\gamma}\partial_{x_1} S)
 -u_1(\partial_{x_2} B-\frac{\rho^{\gamma-1}}{\gamma}\partial_{x_2} S)\Big)
=-\rho \mathbb{B}'+\frac{\rho^{\gamma}}{\gamma}\mathbb{S}'.
$$

Thus, we obtain the following  equivalent system of \eqref{1.5} when $\partial_{x_2}\psi>0$:
\begin{eqnarray}\label{streamfunction formulation}\label{equ:8.1discontinuous}
	\begin{cases}
		\nabla\psi=\rho (-u_2, u_1),\\[1mm]
		\mathbb{B}(\psi)=\frac{1}{2}|\bu|^2+\frac{\gamma p}{(\gamma-1)\rho},\\[1mm]
		\mathbb{S}(\psi)= \frac{\gamma p}{(\gamma-1)\rho^\gamma},\\[1mm]
		\partial_{x_1}u_2-\partial_{x_2}u_1=-\rho \mathbb{B}'+\frac{\rho^{\gamma}}{\gamma}\mathbb{S}'
	\end{cases}
\end{eqnarray}
with boundary conditions:
\begin{eqnarray}\label{bou:2.9}
\psi|_{x_2=w_1(x_1)}=0,
\qquad
\psi|_{x_2=w_2(x_1)}=m.
\end{eqnarray}

Based on \eqref{streamfunction formulation},
the Bernoulli function becomes
\begin{equation}\label{E:2.22}
\frac{1}{2}|\nabla_{\bx}\psi|^2+\rho^{\gamma+1}\mathbb{S}(\psi)=\rho^2\mathbb{B}(\psi),
\end{equation}
which shows that, for the subsonic flow,
density $\rho$ is an increasing function of $|\nabla_{\bx}\psi|$,
and speed $q$ becomes $q=\frac{|\nabla_{\mathbf{x}}\psi|}{\rho}$.
We write $\rho(|\nabla_{\bx}\psi|, \psi)$ as the implicit function from \eqref{E:2.22},
when $M<1$.
Meanwhile, the sonic speed $c=c(|\nabla_{\bx}\psi|, \psi)\geq0$ is defined as
$$
c^2(|\nabla_{\bx}\psi|, \psi):=(\gamma-1)\rho^{\gamma-1}(|\nabla_{\bx}\psi|, \psi)\mathbb{S}(\psi).
$$
By a direct calculation, we see that $M<1$ if and only if
\begin{equation}\label{subsonic0}
|\nabla_{\bx}\psi|^2
<(\gamma-1)\Big(\frac{2\mathbb{B}(\psi)}{\gamma+1}\Big)^{\frac{\gamma+1}{\gamma-1}}\mathbb{S}^{-\frac{2}{\gamma-1}}(\psi).
\end{equation}

In addition, the vorticity function $\omega$ is
\begin{eqnarray}
\omega
:=-\partial_{x_1}\Big(\frac{\partial_{x_1}\psi}{\rho}\Big)-\partial_{x_2}\Big(\frac{\partial_{x_2}\psi}{\rho}\Big)
=-\frac{1}{\rho^2}\big(\rho \Delta_{\bx}\psi +\nabla_\bx \psi\cdot\nabla_\bx \rho\big)
\label{equ:3.14}.
\end{eqnarray}
Taking the derivatives  with respect to  $x_i$, $i=1,2$, on \eqref{E:2.22}
yields
\begin{equation}\label{E:2.23}
\partial_{x_i}\rho
=\frac{\frac{1}{2}\partial_{x_i}(|\nabla_{\bx}\psi|^2)+(\rho^{\gamma+1}\mathbb{S}'
 -\rho^2\mathbb{B}')\partial_{x_i}\psi}{\rho(q^2-c^2)},
\end{equation}
where
\begin{eqnarray*}
\frac{1}{2}\partial_{x_i}(|\nabla_{\bx}\psi|^2)
&=&\partial_{x_1}\psi\partial_{x_1x_i}\psi+\partial_{x_2}\psi\partial_{x_ix_2}\psi.
\end{eqnarray*}
Combining \eqref{equ:3.14}--\eqref{E:2.23} with $\eqref{equ:8.1discontinuous}_4$,
we finally obtain a second-order equation for $\psi$,
which is equivalent to the full Euler system \eqref{1.5} for smooth solutions when $\psi_{x_2}>0$:
\begin{eqnarray}\label{equ:3.18}
	a_{11}\partial_{x_1x_1}\psi+a_{12}\partial_{x_1x_2}\psi+a_{22}\partial_{x_2x_2}\psi=f,
\end{eqnarray}
where
\begin{eqnarray*}
	\begin{cases}
		a_{11}:=a_{11}(\nabla_{\bx}\psi, \psi)= \rho^2c^2-(\partial_{x_2}\psi)^2,\\[1mm]
		a_{12}:=a_{12}(\nabla_{\bx}\psi, \psi)= 2\partial_{x_1}\psi\partial_{x_2}\psi,\\[1mm]
		a_{22}:=a_{22}(\nabla_{\bx}\psi, \psi)= \rho^2c^2-(\partial_{x_1}\psi)^2,\\[1mm]
		f:=f(\nabla_{\bx}\psi, \psi)=-\rho^3q^2\frac{(\gamma-1)\rho^\gamma\mathbb{S}'}{\gamma}
              +\rho^3 c^2(\rho\mathbb{B}'-\frac{\rho^\gamma}{\gamma}\mathbb{S}').
	\end{cases}
\end{eqnarray*}

Now we can reformulate {\bf Problem 2.1($m$)} into the following problems.

\medskip
\textbf{Problem 3.1($m$) (Smooth subsonic flow)}.
Find a smooth solution $\psi(\bx)$ of the nonlinear equation \eqref{equ:3.18}
for $\bx\in\Omega$  with the boundary conditions \eqref{bou:2.9}
so that
\begin{eqnarray}
&&M(\nabla_{\bx}\psi, \psi)<1 \qquad \mbox{ for } \bx\in\overline{\Omega}, \label{ieq:subsonic}\\
&&\partial_{x_2}\psi>0 \qquad\qquad\quad  \mbox{ for } \bx\in\overline{\Omega},\label{ieq:nondegeneracy0}\\
&& 0<\psi<m  \qquad\qquad\, \mbox{ for } \bx\in\Omega.\label{ieq:streamline}
\end{eqnarray}

\medskip
\textbf{Problem 3.2($m$) (Piecewise smooth subsonic flow)}.
Find a piecewise smooth solution $\psi(\bx)$ of the nonlinear equations \eqref{equ:8.1discontinuous}
for $\bx\in\Omega$ with the boundary conditions \eqref{bou:2.9} when $\psi_-'(x_2)$ has jump at $x_2=x_d$, $0<x_d<1$,
so that there is a discontinuity $\Gamma$ that is a streamline originated from the inlet with
\begin{eqnarray*}
	\psi\equiv m_d:=\int_0^{x_d}(\rho_-u_{1-})(t)\, \dd t  \qquad \mbox{at $\Gamma$},
\end{eqnarray*}
and satisfies \eqref{ieq:subsonic}, \eqref{ieq:streamline}, and
\begin{equation}\label{ieq:nondegeneracy}
	\partial_{x_2}\psi>0 \qquad \mbox{ for } \bx\in\overline{\Omega}\backslash\Gamma.
\end{equation}

\medskip
Finally, requirements \eqref{assumption:3.40}$-$\eqref{assumption:4.3} at the inlet can be reformulated
to the requirements on $(\mathbb{B},\mathbb{S})$ as follows:
By a direct calculation, \eqref{assumption:3.40}--\eqref{assumption:3.47a} imply
	\begin{equation}\label{assumption:3.40i}
	\inf\limits_{s\in[0,m]}\mathbb{B}(s)>0,\qquad\,
	\inf\limits_{s\in[0,m]}\mathbb{S}(s)>0,
	\end{equation}
	and
	\begin{equation}\label{assumption:3.47ai}
	(\mathbb{B}\mathbb{S}^{-\frac{1}{\gamma}})'(0)\leq0,
	\qquad\,
	(\mathbb{B}\mathbb{S}^{-\frac{1}{\gamma}})'(m)\geq0.
	\end{equation}

For requirements \eqref{assumption:4.2}$-$\eqref{assumption:4.3}, we have

\begin{lemma}\label{lem:3.1xw}
If \eqref{ieq:nondegeneracy} holds,
then assumptions \eqref{assumption:4.2}$-$\eqref{assumption:4.3} imply
\begin{enumerate}
\item[(i)]
$\quad \inf\limits_{m_d-\varepsilon_0<s<m_d}(\mathbb{B}\mathbb{S}^{-\frac{1}{\gamma}})'(s)\geq0$,
$\quad$ $\inf\limits_{m_d-\varepsilon_0<s<m_d}(\mathbb{B}\mathbb{S}^{-\frac{2}{\gamma(\gamma+1)}})'(s)\geq0$,

\smallskip
$[\mathbb{B}\mathbb{S}^{-\frac{1}{\gamma}}](m_d)\geq0$, $\quad$ $[\mathbb{B}\mathbb{S}^{-\frac{2}{\gamma(\gamma+1)}}](m_d)\geq0$,
	
	\smallskip
	or
	
	\smallskip
\item[(ii)]
$\quad \sup\limits_{m_d<s<m_d+\varepsilon_0}(\mathbb{B}\mathbb{S}^{-\frac{1}{\gamma}})'(s)\leq0$,
$\quad$ $\sup\limits_{m_d<s<m_d+\varepsilon_0}(\mathbb{B}\mathbb{S}^{-\frac{2}{\gamma(\gamma+1)}})'(s)\leq0$,

\smallskip
$[\mathbb{B}\mathbb{S}^{-\frac{1}{\gamma}}](m_d)\leq0$, $\quad$ $[\mathbb{B}\mathbb{S}^{-\frac{2}{\gamma(\gamma+1)}}](m_d)\leq0$.
\end{enumerate}
\end{lemma}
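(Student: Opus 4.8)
The plan is to translate the hypotheses \eqref{assumption:4.2}--\eqref{assumption:4.3}, which are stated in terms of the inlet functions $(u_{1-}, S_-)$ as functions of $x_2$, into the corresponding statements for $(\mathbb{B}, \mathbb{S})$ as functions of the stream variable $\psi$, using the change of variable $\psi = \psi_-(x_2)$ from \eqref{psi-ini}. The key observation is that $\psi_-'(x_2) = (\rho_- u_{1-})(x_2) > 0$ on each side of $x_d$ (this uses \eqref{assumption:3.40} and the subsonic assumption $m > \hat m$, and on the limit solution it is \eqref{ieq:nondegeneracy}), so $\psi_-$ is a strictly increasing bijection on $[0,x_d)$ and on $(x_d,1]$, with $\psi_-(x_d^-) = \psi_-(x_d^+) = m_d$ since $\psi_-$ is continuous even though $\psi_-'$ jumps. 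Hence a quantity $g(x_2)$ is monotone increasing in $x_2$ on $(x_d-\varepsilon_0, x_d)$ if and only if $g\circ\psi_-^{-1}(s)$ is monotone increasing in $s$ on $(m_d-\delta_0, m_d)$ for the corresponding $\delta_0>0$, and the jump $[g](x_d)$ has the same sign as $[g\circ\psi_-^{-1}](m_d)$.

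Next I would record the explicit algebraic relations between $(\mathbb{B},\mathbb{S})$ and $(u_{1-}, S_-)$. By construction $\mathbb{S}(\psi_-(x_2)) = S_-(x_2)$ and $\mathbb{B}(\psi_-(x_2)) = B_-(x_2)$, with $B_-$ given by \eqref{equ:2.14a}. A direct computation gives
\begin{equation*}
(\mathbb{B}\mathbb{S}^{-\frac{1}{\gamma}})(\psi_-(x_2))
= \tfrac12 u_{1-}^2 S_-^{-\frac1\gamma}
+ m^{\gamma-1}\Big(\int_0^1 u_{1-}S_-^{-\frac1\gamma}\,\dd x_2\Big)^{1-\gamma},
\end{equation*}
so that the derivative of $\mathbb{B}\mathbb{S}^{-1/\gamma}\circ\psi_-$ in $x_2$ is a positive multiple of $(u_{1-}^2 S_-^{-1/\gamma})'$, and its jump at $x_d$ is a positive multiple of $[u_{1-}^2 S_-^{-1/\gamma}]$ (the constant term drops out). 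This yields the first inequality and the first jump condition in each of (i) and (ii) directly from \eqref{assumption:4.2}--\eqref{assumption:4.3}. For the quantity $\mathbb{B}\mathbb{S}^{-\frac{2}{\gamma(\gamma+1)}}$ I would write $\mathbb{B}\mathbb{S}^{-\frac{2}{\gamma(\gamma+1)}} = (\mathbb{B}\mathbb{S}^{-\frac1\gamma})\cdot \mathbb{S}^{\frac1\gamma - \frac{2}{\gamma(\gamma+1)}} = (\mathbb{B}\mathbb{S}^{-\frac1\gamma})\cdot \mathbb{S}^{\frac{\gamma-1}{\gamma(\gamma+1)}}$; since $\frac{\gamma-1}{\gamma(\gamma+1)} > 0$, the factor $\mathbb{S}^{\frac{\gamma-1}{\gamma(\gamma+1)}}$ is a strictly increasing function of $\mathbb{S}$, hence (pulling back by $\psi_-$) an increasing function of $x_2$ on $(x_d-\varepsilon_0,x_d)$ precisely when $S_-' \ge 0$ there, with a jump of the sign of $[S_-]$ at $x_d$. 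A product of two nonnegative, nondecreasing quantities is nondecreasing, and a product of two quantities each with a nonnegative jump has a nonnegative jump (using positivity of $\mathbb{B},\mathbb{S}$ from \eqref{assumption:3.40i}); this handles case (i). Case (ii) is symmetric, with "nonincreasing" and "nonpositive jump" throughout, using that a product of positive nonincreasing quantities is nonincreasing.

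The main obstacle is bookkeeping rather than conceptual: one must keep careful track of signs when multiplying the two factors, and verify that the product rule for jumps, $[fg] = f_+[g] + [f]g_-$ (equivalently $f_-[g]+[f]g_+$), indeed preserves the sign when both $[f]$ and $[g]$ are of the same sign and $f,g$ are positive — here positivity of $\mathbb{B}$ and $\mathbb{S}$ on $[0,m]$, which follows from \eqref{assumption:3.40i}, is exactly what is needed. A secondary point is to confirm that $\varepsilon_0$ in \eqref{assumption:4.2}--\eqref{assumption:4.3} and the $\varepsilon_0$ appearing in the conclusion can be taken to match after the change of variables; since $\psi_-^{-1}$ is Lipschitz with positive lower bound on its derivative near $m_d$ on each side, any one-sided neighborhood of $x_d$ maps to a one-sided neighborhood of $m_d$, so one simply shrinks $\varepsilon_0$ if necessary. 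With these observations in place the lemma follows by assembling the four monotonicity/jump statements in each case.
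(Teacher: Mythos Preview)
Your proposal is correct and follows essentially the same approach as the paper: both reduce the $\mathbb{B}\mathbb{S}^{-1/\gamma}$ claim to the observation that $\mathbb{B}\mathbb{S}^{-1/\gamma}\circ\psi_-$ equals $\tfrac12 u_{1-}^2 S_-^{-1/\gamma}$ plus a constant, and both handle the $\mathbb{B}\mathbb{S}^{-\frac{2}{\gamma(\gamma+1)}}$ claim via the factorization $\mathbb{B}\mathbb{S}^{-\frac{2}{\gamma(\gamma+1)}} = (\mathbb{B}\mathbb{S}^{-1/\gamma})\cdot\mathbb{S}^{\frac{\gamma-1}{\gamma(\gamma+1)}}$ together with the sign conditions on $(u_{1-}^2 S_-^{-1/\gamma})'$ and $S_-'$. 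The only stylistic difference is that the paper writes out the derivative of $B_- S_-^{-\frac{2}{\gamma(\gamma+1)}}$ explicitly as a sum of two manifestly signed terms, whereas you invoke the product rule for monotone positive functions and for jumps more abstractly; the content is identical.
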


\begin{proof}
In the same way as in \eqref{assumption:3.47ai}, we can prove Lemma \ref{lem:3.1xw} for $(\mathbb{B}\mathbb{S}^{-\frac{1}{\gamma}})'$.
Furthermore,
note that \eqref{equ:2.14a} can be rewritten as $B_-=\frac{1}{2}u_{1-}^2+S_-^{\frac{1}{\gamma}}a(m)$, where
	$$
	a(m)=m^{\gamma-1} \Big(\int_0^1(u_{1-}S_-^{-\frac{1}{\gamma}})(x_2)\, \dd x_2\Big)^{1-\gamma}>0.
	$$
Then
	\begin{eqnarray}\label{3.27a}
	B_-S_-^{-\frac{2}{\gamma(\gamma+1)}}
	=\frac{1}{2}u_{1-}^2S_-^{-\frac{1}{\gamma}} S_-^{\frac{\gamma-1}{\gamma(\gamma+1)}}+S_-^{\frac{\gamma-1}{\gamma(\gamma+1)}}a(m).
	\end{eqnarray}
Taking $\partial_{x_2}$ on \eqref{3.27a} yields
	\begin{eqnarray*}
(B_-S_-^{-\frac{2}{\gamma(\gamma+1)}})'
=\frac{1}{2}(u_{1-}^2S_-^{-\frac{1}{\gamma}})' S_-^{\frac{\gamma-1}{\gamma(\gamma+1)}}
    + \frac{\gamma-1}{2\gamma(\gamma+1)}\Big(u_{1-}^2S_-^{-\frac{1}{\gamma}}+2 a(m)\Big)
       S_-^{-\frac{\gamma^2+1}{\gamma(\gamma+1)}}S_{-}'.\nonumber
	\end{eqnarray*}
	Therefore, Lemma \ref{lem:3.1xw} is proved.
\end{proof}

\subsection{A modification problem}
Following \cite{ChenDengXiang}, we introduce a modified problem to solve {\bf Problem 3.1($m$)}.
First, set
\begin{eqnarray*}
\mathfrak{S}(s)=
\begin{cases}
\mathbb{S}'(s) \quad&\text{if}\ 0\leq s\leq m,\\[1mm]
\mathbb{S}'(m)\frac{2m-s}{m} \quad&\text{if}\ m\leq s\leq 2m,\\[1mm]
\mathbb{S}'(0)\frac{s+m}{m} \quad&\text{if}\ -m\leq s\leq 0,\\[1mm]
0,\qquad&\text{if}\ s\geq2m \,\, \text{or}\ s\leq-m,
\end{cases}
\end{eqnarray*}
and
\begin{eqnarray*}
\mathfrak{B}(s)=
\begin{cases}
(\mathbb{B}\mathbb{S}^{-\frac{1}{\gamma}})'(s) \quad&\text{if}\ 0\leq s\leq m,\\[1mm]
(\mathbb{B}\mathbb{S}^{-\frac{1}{\gamma}})'(m)\frac{2m-s}{m} \quad&\text{if}\ m\leq s\leq 2m,\\[1mm]
(\mathbb{B}\mathbb{S}^{-\frac{1}{\gamma}})'(0)\frac{s+m}{m} \quad&\text{if}\ -m\leq s\leq 0,\\[1mm]
0,\qquad&\text{if}\ \psi\geq2m\,\,\text{or}\ s\leq-m.
\end{cases}
\end{eqnarray*}
We define
\begin{equation}\label{3.0}
\tilde{\mathbb{S}}(s)=\mathbb{S}(0)+\int_{0}^{s}\mathfrak{S}(\tau)\dd\tau,\qquad
\tilde{\mathbb{B}}(s)=\tilde{\mathbb{S}}^{\frac{1}{\gamma}}(s)
\Big(\mathbb{B}(0)\mathbb{S}^{-\frac{1}{\gamma}}(0)+\int_{0}^{s}\mathfrak{B}(\tau)\dd\tau\Big).
\end{equation}
For notational simplicity, we still denote them as $(\mathbb{B}, \mathbb{S})$ later on.
For $\v>0$, let
\begin{eqnarray*}
\zeta_{0}(s)=
\begin{cases}
s \qquad&\text{if}\ s<-2\v,\\
-\frac{3}{2}\v \qquad&\text{if}\ s\geq-\v
\end{cases}
\end{eqnarray*}
be a smooth increasing function  such that $|\zeta'_{0}|\leq1$.
Define
\begin{equation}\label{cut-off using}
\tilde{Q}(|\nabla_{\bx}\psi|,\psi)
:=\Big(\zeta_{0}(\frac{|\nabla_{\bx}\psi|}{\hat{Q}(\psi)}-1)+1\Big)\hat{Q}(\psi),
\end{equation}
so that
\begin{equation*}
\frac{1}{2}\tilde{Q}^2(|\nabla_{\bx}\psi|, \psi)+
\tilde{\mathbb{S}}(\psi)\tilde{\rho}^{\gamma+1}=\tilde{\mathbb{B}}(\psi)\tilde{\rho}^2,
\end{equation*}
where
\begin{equation}\label{hat-Q}
\hat{Q}(\psi):=
(\gamma-1)^{\frac{1}{2}}\Big(\frac{2\tilde{\mathbb{B}}(\psi)}{\gamma+1}\Big)^{\frac{\gamma+1}{2(\gamma-1)}}
\tilde{\mathbb{S}}^{-\frac{1}{\gamma-1}}(\psi).
\end{equation}
Then we can define the modified density $\tilde{\rho}$ and the sonic speed $\tilde{c}$ accordingly.
Replace $|\nabla_{\bx}\psi|$, $\rho(|\nabla_{\bx}\psi|, \psi)$, $\mathbb{B}$, and $\mathbb{S}$
by $\tilde{Q}(|\nabla_{\bx}\psi|, \psi)$, $\tilde{\rho}(|\nabla_{\bx}\psi|, \psi)$, $\tilde{\mathbb{B}}$,
and $\tilde{\mathbb{S}}$ in $f$ and $a_{ij}$, $i,j=1,2$,
and rewrite these quantities as $\tilde{f}$ and $\tilde{a}_{ij}$,
$i,j=1,2$.
Then we reformulate {\bf Problem 3.1($m$)} to the following {\bf Problem 3.3($m$)}.

\medskip
\textbf{Problem 3.3($m$)}:
Solve
\begin{eqnarray*}
\begin{cases}
\tilde{a}_{11}\partial_{x_1x_1}\psi+\tilde{a}_{12}\partial_{x_1x_2}\psi +\tilde{a}_{22}\partial_{x_2x_2}\psi=\tilde{f},\\[1mm]
\psi|_{x_2=w_1(x_1)}=0,\\[1mm]
\psi|_{x_2=w_2(x_1)}=m,\\
\end{cases}
\end{eqnarray*}
with $\partial_{x_2}\psi>0$ for $\bx\in\overline{\Omega}$, and $0<\psi<m$ for $\bx\in \Omega$.

\subsection{Euler-Lagrange coordinate transformation}
The Euler-Lagrange coordinate transformation from $\bx=(x_1, x_2)$ to $\bz=(z_1, z_2)$,
 which will be frequently used in this paper,
is given by
\begin{equation}\label{La coordinates}
(z_1, z_2)=(x_1, \psi(x_1,x_2)).
\end{equation}
The corresponding Jacobian of the transformation is
\begin{equation}\label{equ:3.2Jacob}
\left|\frac{D\bz}{D\bx}\right|
=\frac{\partial \psi}{\partial x_2}.
\end{equation}

Let $\varphi=x_2$.
Then we have
\begin{eqnarray*}
&& \partial_{z_1}\varphi=-\frac{\partial_{x_1}\psi}{\partial_{x_2}\psi},\qquad\qquad\,\,
\partial_{z_2}\varphi=\frac{1}{\partial_{x_2}\psi},\\
&& \rho u_1=\partial_{x_2}\psi=\frac{1}{\partial_{z_2}\varphi},
\qquad\,
\rho u_2=-\partial_{x_1}\psi=\frac{\partial_{z_1}\varphi}{\partial_{z_2}\varphi}.
\end{eqnarray*}
Set
\begin{equation*}
\mathcal{Q}^2(\nabla_\bz\varphi):=\frac{1}{(\partial_{z_2}\varphi)^2}\big((\partial_{z_1}\varphi)^2+1\big).
\end{equation*}
In the new coordinates, we know that $(\mathbb{B},\mathbb{S})$ are functions of $z_2$.
Then, from the Bernoulli law,
\begin{equation}\label{equ:6.3}
\frac{1}{2}\mathcal{Q}^2(\nabla_\bz\varphi)+\rho^{\gamma+1}\mathbb{S}(z_2)=\rho^2\mathbb{B}(z_2),
\end{equation}
so that the density can be regarded as a function of $(\nabla_\bz\varphi, z_2)$:
$$
\rho=\rho(\nabla_\bz\varphi, z_2).
$$
From $\eqref{weaksolution}_3$,   for $\rho u_1 \neq 0$, we see that,
for any test function $\zeta$,
\begin{eqnarray}\label{weakLequation}
0&=&\int_{\Omega} \big(\partial_{x_1}\zeta \rho u_1 u_2+ \partial_{x_2}\zeta (\rho u_2^2+p)\big) \dd\bx\nonumber\\
&=&-\int_{\mathbb{R}\times[0, m]}\Big(\rho u_1 u_2\partial_{z_1} \zeta
    +(\rho^2 u_1 u_2^2+\rho u_1 p-\rho^2 u_1 u_2^2 )\partial_{z_2}\zeta\Big)\frac{1}{\rho u_1}\dd \mathbf{z}\nonumber\\
&=&-\int_{\mathbb{R}\times[0, m]} \big(u_2\partial_{z_1} \zeta+ p\partial_{z_2}\zeta\big) \dd \mathbf{z},
\end{eqnarray}
which is equivalent to
$$
\partial_{z_1}u_2+\partial_{z_2}p=0.
$$
Then the equation for $\varphi$ is
\begin{equation}\label{equ:6.14}
\partial_{z_1}\Big(\frac{\partial_{z_1}\varphi}{\rho(\nabla_\bz\varphi, z_2)\partial_{z_2}\varphi}\Big)
 +\partial_{z_2}p(\nabla_\bz\varphi, z_2)=0.
\end{equation}

\begin{remark}
Notice that the characteristic discontinuity in the Lagrangian coordinates is simply the horizontal straight line{\rm:} $z_2=m_d$.
This is the main advantage of the Lagrangian coordinates.
\end{remark}

\section{Existence of the Stream Function $\psi$ with Large Vorticity}\label{sec:existence smooth}

This section is devoted to solving {\bf Problem 3.3($m$)}. More precisely, we have

\begin{theorem}\label{prop:5.1}
Let boundary $\partial\Omega$ satisfy \eqref{Con:Boundary1}--\eqref{Con:Boundary2}.
Let $(\mathbb{B}, \mathbb{S})$ satisfy assumption  \eqref{assumption:3.40}--\eqref{assumption:3.47a}.
Then there exists
$\tilde{m}\geq\hat{m}>0$ such that, if
$m>\tilde{m}$,  there is
a solution $\psi\in
C^{2,\alpha}(\overline{\Omega})$ of {\bf Problem 3.3($m$)} so that
the stream function $\psi$ satisfies inequalities \eqref{ieq:subsonic}--\eqref{ieq:streamline} and
\begin{equation}\label{3.11}
|\nabla_{\bx}\psi|<(1-2\v) \hat{Q}(\psi)
\end{equation}
for some $\v>0$, where $\hat{m}$ is defined in
\S {\rm \ref{subset:asymptotic state}} and  $\hat{Q}(\psi)$ is defined by \eqref{hat-Q}.
\end{theorem}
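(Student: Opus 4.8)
The plan is to solve the quasilinear boundary value problem in \textbf{Problem 3.3($m$)} by a continuity/fixed-point argument on bounded subdomains, followed by a limiting procedure to the infinite nozzle, and then to recover the subsonic bound \eqref{3.11} from \textit{a priori} estimates that are uniform in the truncation. First I would truncate $\Omega$ to $\Omega_L:=\Omega\cap\{|x_1|<L\}$, impose on the artificial inlet/outlet $\{x_1=\pm L\}$ the one-dimensional profiles $\psi=\psi_\mp(x_2)$ obtained by solving the corresponding ODE problems (the asymptotic states of \S\ref{subset:asymptotic state}), and solve the modified equation $\tilde a_{11}\partial_{x_1x_1}\psi+\tilde a_{12}\partial_{x_1x_2}\psi+\tilde a_{22}\partial_{x_2x_2}\psi=\tilde f$ on $\Omega_L$. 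Because the coefficients $\tilde a_{ij}$ have been truncated via the cut-off $\tilde Q$ in \eqref{cut-off using}, the operator is uniformly elliptic with $C^\alpha$ coefficients depending on $\psi$ through $(\nabla_\bx\psi,\psi)$ only; so a Leray–Schauder fixed-point argument in $C^{1,\alpha}(\overline{\Omega_L})$, using the Schauder estimate and the fact that the right-hand side $\tilde f$ is bounded by a constant depending on $(\mathbb B,\mathbb S)$ and $m$, yields a solution $\psi_L\in C^{2,\alpha}(\overline{\Omega_L})$.

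Next I would establish the two structural inequalities that make the truncation harmless: $0<\psi_L<m$ in $\Omega_L$ and $\partial_{x_2}\psi_L>0$ in $\overline{\Omega_L}$. The first follows from the maximum principle once one observes that $\psi\equiv0$ and $\psi\equiv m$ are sub/supersolutions compatible with the boundary data. The second, the non-degeneracy, is the crucial structural fact and I expect it to require the sign conditions \eqref{assumption:3.47ai} on the boundary together with the new coordinate-shifting idea emphasized in the introduction: differentiating the equation in $x_2$, the quantity $\partial_{x_2}\psi$ satisfies a linear elliptic equation, and on $W_1\cup W_2$ one controls its sign using the monotonicity of $\mathbb B\mathbb S^{-1/\gamma}$ at the endpoints; a Hopf-lemma argument then excludes interior zeros. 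This is where the absence of any convexity assumption on $\mathbb B,\mathbb S$ must be absorbed — the hard part is showing that the estimate does \emph{not} degenerate as the first derivatives of $(\mathbb B,\mathbb S)$ blow up, which is precisely the second difficulty listed in \S1.

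The central estimate is then the uniform subsonic bound \eqref{3.11}: I would derive an equation for the speed $q$ (equivalently for $|\nabla_\bx\psi|/\hat Q$) and for the pressure $p$, and show, via a maximum-principle argument applied to a suitable combination of $p$ and the flow angle $\theta$ (using the boundary bounds \eqref{2.61b} and \eqref{equ:2.61a} for these quantities, which themselves come from maximum principles on their governing equations), that the Mach number stays bounded away from $1$ provided $m>\tilde m$ for a threshold $\tilde m\ge\hat m$ depending only on $(\mathbb B,\mathbb S)$ and $(w_1,w_2)$. The mechanism is that large $m$ forces $|\nabla_\bx\psi|\sim\rho u_1$ to be large while $\hat Q(\psi)$ grows faster, so the ratio stays below $1-2\v$; the subtlety is that this must be obtained independently of $\|\mathbb B'\|,\|\mathbb S'\|$, which is why one works with the pressure/angle equations rather than energy estimates. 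Once \eqref{3.11} holds uniformly in $L$, the cut-off $\tilde Q$ is inactive, so $\psi_L$ solves the original modified equation; Schauder estimates give uniform $C^{2,\alpha}$ bounds on compact subsets, and a diagonal extraction produces $\psi\in C^{2,\alpha}(\overline\Omega)$ solving \textbf{Problem 3.3($m$)} with all the stated properties. The main obstacle, as flagged above, is the non-degeneracy $\partial_{x_2}\psi>0$ combined with keeping the subsonic bound insensitive to the size of $(\mathbb B',\mathbb S')$.
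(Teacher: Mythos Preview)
Your overall architecture (truncate, solve, estimate, pass to the limit) matches the paper, but the crucial step---the non-degeneracy $\partial_{x_2}\psi>0$---is where your proposal has a genuine gap. You write that one should differentiate the equation in $x_2$, obtain a linear elliptic equation for $\partial_{x_2}\psi$, control its boundary sign via \eqref{assumption:3.47ai}, and then exclude interior zeros by Hopf. That approach is exactly what earlier papers \cite{DXX,CC,DL} do, and it \emph{requires} a sign on $(\mathbb{B}'',\mathbb{S}'')$ to make the zeroth-order coefficient of the linearised equation have the right sign; without it the maximum principle gives you nothing. The ``coordinate-shifting'' idea that the paper actually uses is something different: extend $\psi$ by $0$ below and $m$ above, set $\psi_\varepsilon(x_1,x_2):=\psi(x_1,x_2+\varepsilon)$, observe $\psi<\psi_\varepsilon$ for large $\varepsilon$, slide $\varepsilon$ down to the first touching value $\varepsilon_0$, and show via a Hopf argument applied to the \emph{difference} $\psi_{\varepsilon_0}-\psi$ (not to $\partial_{x_2}\psi$) that no interior touching can occur. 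This yields $\partial_{x_2}\psi\ge0$ with no second-derivative hypothesis; only \emph{after} that does one differentiate, and then the equation for $\mathcal{U}=\partial_{x_2}\psi$ is used merely to upgrade $\ge0$ to $>0$ via the strong maximum principle with an exponential weight $e^{-\alpha x_2}\mathcal{U}$ to absorb the bad zeroth-order term. You reference the shifting idea by name but then describe the old differentiation route, so the mechanism that removes the convexity assumption is missing from your outline.

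A secondary point: you propose to get the subsonic bound \eqref{3.11} from maximum principles for $p$ and $\theta$. In the paper those equations (Lemma~\ref{lem:5.4qmax}) come \emph{after} Theorem~\ref{prop:5.1} and are used to relax $\tilde m$ down to the sharper $\tilde m_{\rm c}$ that is insensitive to $\|(\mathbb B',\mathbb S')\|$ (Lemma~\ref{lem:6.5q}). At the level of Theorem~\ref{prop:5.1} itself, \eqref{3.11} is obtained more crudely: the interior $C^{1,\mu}$ estimate gives $|\nabla\psi|\le C(1+m+\|\tilde f\|_0/\lambda)\sim m^{(\gamma-1)/2}$ for large $m$, while $\hat Q(\psi)\gtrsim m^{(\gamma+1)/2}$, so the ratio is small once $m>\tilde m$. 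This $\tilde m$ \emph{does} depend on $\|(\mathbb B',\mathbb S')\|$; the independence you allude to is a separate, later refinement.
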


\begin{proof}
We divide the proof into five steps.

\subsection{Step 1: Existence in bounded domains}
First, we focus on the existence of the following boundary value problem:
\begin{eqnarray}\label{3.12}
\begin{cases}
\tilde{a}_{11}\partial_{x_1x_1}\psi_L+\tilde{a}_{12}\partial_{x_1x_2}\psi_L+\tilde{a}_{22}\partial_{x_2x_2}\psi_L=\tilde{f}
 \qquad\, \, \text{in}\ \Omega_L,\\[2mm]
\psi=\frac{x_2-w_1(x_1)}{w_2(x_1)-w_1(x_1)} m\qquad\, \,\, \text{on}\ \partial \Omega_{L},
\end{cases}
\end{eqnarray}
where $\Omega_{L}:=\{\bx \,:\, |x_1|<2L , w_1(x_1)\le x_2\le w_2(x_1) \} $.

\smallskip
Then, by a standard argument (similar to \cite{Lieberman} for example) such as the corner estimates,
the change of coordinates, and the odd extension near the corners to guarantee the local existence,
we conclude that there exists a solution $\psi_{L}\in C^{2, \beta}(\Omega_L) \cap C^{1,\beta}(\overline{\Omega_L})$
of problem \eqref{3.12}.

\subsection{Step 2: Uniform estimates} Based on the existence in bounded domains,
we now make some uniform estimates that are independent of $L$.

\subsubsection{$0<\psi_L<m$ in $\Omega_L$}\label{subsubsec:5.2.1}

If $\psi_L< m$ is not true, then $D_L:=\Omega_{L}\cap\{\bx\,:\,\psi_L(\bx)> m\}$ is non-empty.
If $\psi_L$ achieves its interior maximum at point $\bx_0$, then
$$
\tilde{a}_{11}\partial_{x_1x_1}\psi_L+\tilde{a}_{12}\partial_{x_1x_2}\psi_L+\tilde{a}_{22}\partial_{x_2x_2}\psi_L< 0, \quad\, \tilde{Q}(\nabla_\bx\psi_L, \psi_L)=0
\qquad\, \mbox{at $\bx_0$}.
$$
Meanwhile, the Bernoulli function satisfies $\tilde{\rho}^{\gamma-1}\tilde{\mathbb{S}}=\tilde{\mathbb{B}}$, which means
\begin{eqnarray*}
\tilde{\rho}^3 \tilde{c}^2\big(\tilde{\rho}\tilde{\mathbb{B}}'-\frac{\tilde{\rho}^\gamma}{\gamma}\tilde{\mathbb{S}}'\big)
=\tilde{\rho}^4 \tilde{c}^2\big(\tilde{\mathbb{B}}'-\frac{\tilde{\mathbb{B}}}{\gamma\tilde{\mathbb{S}}}\tilde{\mathbb{S}}'\big)
=\tilde{\rho}^4\tilde{c}^2\tilde{\mathbb{B}}\big(\mbox{ln}(\tilde{\mathbb{B}}\tilde{\mathbb{S}}^{-\frac{1}{\gamma}})\big)'\ge 0.
\end{eqnarray*}
This is a contradiction, which implies that $\psi_L$ can not achieve its maximum inside $D_L$.
Thus, it follows from the boundary conditions that $\psi_L< m$ in $\Omega_L$.
Similarly, we can obtain that $\psi_L>0$ in $\Omega_L$.

\subsubsection{Uniform H\"{o}lder estimate}

Based on the uniform $L^{\infty}$-estimate, we can obtain  higher order regularity estimates for $\psi_{L}$.
In fact,  following  \cite{GT},
we deduce that
there exists $\mu=\mu(\frac{\Lambda}{\lambda})>0$ such that,
for any $\bx_{0}\in\Omega_{L}$ and  $\psi_{k}$ with $k\geq 4L$,
\begin{equation}\label{3.20}
[\psi_{k}]_{1,\mu;B_{1}(\bx_{0})\cap \Omega_{L}}
\leq C(\frac{\Lambda}{\lambda})
\Big(1+\|\nabla_{\bx} \psi_{k}\|_{0;B_{1}(\bx_{0})\cap \Omega_{L}}
+\frac{\|\tilde{f}\|_0}{\lambda}\Big).
\end{equation}
Furthermore, using the interpolation inequality and the uniform $L^{\infty}$-estimate,
we obtain
\begin{equation*}
\|\psi_{k}\|_{1;B_{1}(\bx_{0})\cap \Omega_{L}}
\leq\tau C(\frac{\Lambda}{\lambda})\Big(1+\|\nabla_\bx \psi_{k}\|_{0;B_{1}(\bx_{0})
	\cap \Omega_{L}}+\frac{\|\tilde{f}\|_0}{\lambda}\Big)
+C_{\tau}\Big(m+ \frac{\|\tilde{f}\|_0}{\lambda}\Big).
\end{equation*}
Take $\tau_{0}$ sufficiently small so that
$\tau C(\frac{\Lambda}{\lambda})\leq \frac{1}{2}$
if $\tau\leq \tau_{0}$. Then
\begin{equation}\label{3.21}
\|\psi_{k}\|_{1;B_{1}(\bx_{0})\cap \Omega_{L}}
\leq\tau C(\frac{\Lambda}{\lambda})\Big(1+\frac{\|\tilde{f}\|_0}{\lambda}\Big)
+C_{\tau}\Big(m+\frac{\|\tilde{f}\|_0}{\lambda}\Big).
\end{equation}
Thus, the H\"{o}lder estimate \eqref{3.20} becomes
\begin{equation}\label{3.22}
\begin{array}{lll}
\|\psi_{k}\|_{1,\mu;B_{1}(\bx_{0})\cap \Omega_{L}}
&=&\|\psi_{k}\|_{1;B_{1}(\bx_{0})\cap \Omega_{L}}
+[\psi_{k}]_{1,\mu;B_{1}(\bx_{0})\cap \Omega_{L}}\\[2mm]
&\leq& \big(1+C(\frac{\Lambda}{\lambda})\big)\|\psi_{k}\|_{1;B_{1}(\bx_{0})\cap \Omega_{L}}
+C(\frac{\Lambda}{\lambda})\big(m+\frac{\|\tilde{f}\|_0}{\lambda}\big)\\[2mm]
&\leq& C(\frac{\Lambda}{\lambda})\big(1+m+\frac{\|\tilde{f}\|_0}{\lambda}\big).
\end{array}
\end{equation}
Notice that, for any $\bx_1\in\overline{\Omega_L}$,
$$
\frac{|\nabla\psi_{k}(\bx_1)-\nabla\psi_{k}(\bx_2)|}{|\bx_1-\bx_2|^{\mu}}\leq
\begin{cases}
  \|\psi_{k}\|_{1,\mu;B_{1}(\bx_{0})\cap \Omega_{L}}\quad &\text{if}\ \bx_2\in B_{1}(\bx_{0})\cap \Omega_{L},\\[1.5mm]
 2\|\psi_{k}\|_{1;B_{1}(\bx_{0})\cap \Omega_{L}}\quad&\text{if}\ \bx_2\notin B_{1}(\bx_{0})\cap \Omega_{L}.
\end{cases}
$$
This, together with \eqref{3.21}--\eqref{3.22},  yields the following H\"{o}lder estimate:
\begin{equation}\label{3.23}
[\psi_{k}]_{1,\mu;\Omega_{L}}
\leq C(\frac{\Lambda}{\lambda})\Big(1+m+\frac{\|\tilde{f}\|_0}{\lambda}\Big).
\end{equation}
Thus, it follows from the standard Schauder estimate that, for any $k\geq4L$,
\begin{equation}\label{3.24}
\|\psi_k\|_{1,\alpha;\Omega_L}\leq C\Big(\frac{\|\tilde{f}\|_{0; \Omega_L}}{\lambda}+\|\psi_k\|_{0; \Omega_L}\Big).
\end{equation}

\subsubsection{Uniform subsonic flows}
Thanks to the elliptic cut-off, there exists a constant $C>0$, independent of $p_-$ or $m$,
such that $1\leq\frac{\Lambda}{\lambda}\leq C$.
Therefore, by \eqref{3.23},
\begin{equation*}
[\psi_{k}]_{1,\mu;\Omega_{L}}
\leq C\Big(1+m+\frac{\|\tilde{f}\|_0}{\lambda}\Big).
\end{equation*}

Following the same argument as in (3.33) of \cite{DXX}, we have
\begin{equation}\label{equ:4.10a}
1+C^{-1}m+C^{-1}m^{\frac{\gamma-1}{2}}\leq 1+m+\frac{\|\tilde{f}\|_0}{\lambda}
\leq 1+Cm+Cm^{\frac{\gamma-1}{2}}.
\end{equation}

Note that
\begin{equation}\label{equ:4.11a}
\hat{Q}(\psi)\geq C^{-1}m^{\frac{\gamma+1}{2}}.
\end{equation}
There exists $\tilde{m}>0$ large enough such that,
if $m>\tilde{m}$, we obtain \eqref{3.11}.

\subsection{Step 3: Uniform direction of the flows}\label{subsec:5.3}
We now show the following lemma.

\begin{lemma}\label{lem:4.12}
The solution of \eqref{3.12} satisfies
\begin{equation}\label{ine:4.12}
\partial_{x_2}\psi >0\,\qquad\mbox{in }\Omega_L.
\end{equation}
\end{lemma}

\begin{proof}
We divide the proof into three steps.

\smallskip
1. \emph{Uniform direction of the flow at the boundary.}
On the upper boundary $x_2=w_2(x_1)$, $\psi_L=m$.
From $\psi_L<m$ in $\Omega_L$ and $\psi_L\in C(\overline{\Omega_L})$,
$\partial_{x_2}\psi_L>0$ holds by Hopf's lemma as in Lemma 6.1 of \cite{ChenDengXiang}.
In the same way, we can see that $\partial_{x_2}\psi_L>0$ on $x_2=w_1(x_1)$.

On the left boundary $x_1=-2L$, $\psi(-2L, x_2)=\frac{x_2-w_1(-2L)}{w_2(-2L)-w_1(-2L)} m$,
which implies
\begin{equation}\label{equ:5.10leftboundary}
\partial_{x_2}\psi(-2L, x_2)=\frac{m}{w_2(-2L)-w_1(-2L)}>0.
\end{equation}
It is the same on the right boundary $x_1=2L$.
Therefore, we have
\begin{equation}\label{boundary 4.13}
\partial_{x_2} \psi >0\qquad\mbox{on }\partial\Omega_L.
\end{equation}

\medskip
2. \emph{Monotonicity of $\psi$ with respect to $x_2$.}
In this step, we show
\begin{equation*}
\partial_{x_2}\psi\geq0\qquad\mbox{in }\Omega_L.
\end{equation*}

Since there is no sign assumption on the second derivatives,
we cannot use the energy estimates to show the non-negativity
of $\partial_{x_2}\psi$.
Therefore, we have to develop an alternative way to achieve our goal.
The main idea is to shift the coordinates.
Indeed, from \S \ref{subsubsec:5.2.1},
$0<\psi<m$ holds at the interior points of $\Omega_L$.
Extend solution $\psi$ such that $\psi=m$ when $x_2>w_2(x_1)$,
and $\psi=0$ when $x_2<w_1(x_1)$.
Then $\psi$ is a Lipschitz continuous function in $\mathbb{R}^2$.
Let $\psi_{\varepsilon}(x_1,x_2):=\psi(x_1,x_2+\varepsilon)$,
then $\psi<\psi_{\varepsilon}$ at the interior point of $\Omega_L$ when $\varepsilon$ is sufficiently large.
Now let $\varepsilon$ be smaller, and let $\varepsilon_0>0$ be the first value that the strict inequality
does not hold in the interior point of $\Omega_L$.
For this $\varepsilon_0$, $\psi\leq\psi_{\varepsilon_0}$ in $\Omega_L$.
Let $\Omega^0_L:=\{\bx\,:\, \psi(\bx)=\psi_{\varepsilon_0}(\bx)\}\cap\Omega_L$.

First, by \eqref{boundary 4.13}, $\psi$ is decreasing linearly near boundary $x_2=w_2(x_1)$;
that is, there is a small constant $c>0$ such that $m-\psi(\bx)>c(w_2(x_1)-x_2)$
near the upper boundary.
Clearly, $\psi<\psi_{\varepsilon_0}$ on the boundaries: $x_2=w_1(x_1)$, $x_1=2L$, and $x_1=-2L$.
Thus, we have
$$
\textrm{dist}\{\Omega_L^0,\, \partial\Omega_L\}>0.
$$
Let $P_0=(x_1^{(0)},x_2^{(0)})$ be the point on the boundary of $\Omega_L^0$ such that
$$
\textrm{dist}\{P_0,\, \partial\Omega_L\}=\textrm{dist}\{\Omega_L^0,\, \partial\Omega_L\}.
$$
Note that $\psi_{\varepsilon_0}-\psi=0$ at $P_0$,
and boundary $\partial\Omega_L^0$
satisfies the interior ball condition at $P_0$ from the side that $\psi_{\varepsilon_0}-\psi>0$.
Since  $P_0\in\Omega_L$, $0<\psi(P_0)=\psi_{\varepsilon_0}(P_0)<m$.
Near $P_0$, $\psi$ and $\psi_{\varepsilon_0}$ are both smooth and satisfy the equation:
\begin{equation*}
\rm{div}\big(\frac{\nabla\psi}{\rho}\big)
=\rho \mathbb{B}'-\frac{\rho^{\gamma}}{\gamma}\mathbb{S}',
\end{equation*}
and the equation:
\begin{equation*}
\rm{div}\big(\frac{\nabla\psi_{\varepsilon_0}}{\rho_{\varepsilon_0}}\big)
=\rho_{\varepsilon_0} \mathbb{B}_{\varepsilon_0}'
-\frac{(\rho_{\varepsilon_0})^{\gamma}}{\gamma}\mathbb{S}_{\varepsilon_0}'
\end{equation*}
respectively,
where $\rho=\rho(\psi,|\nabla\psi|^2)$, $\rho_{\varepsilon_0}=\rho(\psi_{\varepsilon_0},
|\nabla\psi_{\varepsilon_0}|^2)$, $\mathbb{B}_{\varepsilon_0}'=\mathbb{B}'(\psi_{\varepsilon_0})$,
and $\mathbb{S}_{\varepsilon_0}'=\mathbb{S}'(\psi_{\varepsilon_0})$.
Subtraction of the two equations above yields that
$W:=\psi_{\varepsilon_0}-\psi$ satisfies the following elliptic equations of second order:
\begin{equation*}
\check{a}_{11}\partial_{x_1x_1}W+\check{a}_{12}\partial_{x_1x_2}W+\check{a}_{22}\partial_{x_2x_2}W+b_1\partial_{x_1}W+b_2\partial_{x_2}W+cW=0,
\end{equation*}
where functions $\check{a}_{ij}$, $b_{i}$, and $c$ are uniformly bounded and actually smooth, due to the interior estimates.

Next, consider the equation of $W$ in the small neighborhood of $P_0$ such that $|x_1-x_1^{(0)}|\leq d_0$,
where the small constant $d_0$ will be determined later.
Let
$$
\tilde{W}:=\frac{W}{e^{2\alpha d_0}-e^{\alpha(x_1-x_1^{(0)})}}.
$$
Then $\tilde{W}(P_0)=0$, and boundary $\tilde{W}=0$ satisfies the interior ball condition at $P_0$
from the side that $\tilde{W}>0$.
By a direct calculation, $\tilde{W}$ satisfies the following second-order elliptic equation:
\begin{equation*}
\check{a}_{11}\partial_{x_1x_1}\tilde{W}+\check{a}_{12}\partial_{x_1x_2}\tilde{W}+\check{a}_{22}\partial_{x_2x_2}\tilde{W}
  +\check{b}_1\partial_{x_1}\tilde{W}+b_2\partial_{x_2}\tilde{W}+\check{c}\tilde{W}=0,
\end{equation*}
where
$$
\check{b}_1=b_1-\frac{2\alpha^2\check{a}_{11}e^{\alpha(x_1-x_1^{(0)})}}{e^{2\alpha d_0}-e^{x_1-x_1^{(0)}}}, \qquad
\check{c}=-\frac{\big(\check{a}_{11}\alpha^2+{b}_1\alpha\big)e^{\alpha(x_1-x_1^{(0)})}-c\big(e^{2\alpha d_0}
    -e^{\alpha(x_1-x_1^{(0)})}\big)}{e^{2\alpha d_0}-e^{\alpha(x_1-x_1^{(0)})}}.
$$
Since $b_1$ and $c$ are bounded, and $\check{a}_{11}\geq\alpha>0$,
then $\check{c}<0$ when $\alpha$ is sufficiently small.
Therefore, by Hopf's lemma, we have
\begin{equation*}
\partial_{\boldsymbol \nu}W\big|_{P_0}
=\big(e^{2\alpha d_0}-e^{\alpha(x_1-x_1^{(0)})}\big)
 \partial_{\boldsymbol \nu}\tilde{W}\big|_{P_0}\neq0.
\end{equation*}
This is a contradiction, which implies that $\nabla W=0$ at $P_0$.

Thus, for any $\varepsilon>0$, $\psi_{\varepsilon}>\psi$ in the interior of $\Omega_L$.
This means that $\psi$ is increasing in $\Omega_L$.
Therefore, $\partial_{x_2}\psi \geq0$ in $\Omega_L$.

\medskip
3. \emph{Strict positivity of $\partial_{x_2}\psi$}.
Now we show inequality \eqref{ine:4.12} in $\Omega_L$.
Based on Step 2, we know
$$
\cu:=\partial_{x_2}\psi \geq0 \qquad\mbox{in $\Omega_L$}.
$$
Now we derive the equation of $\cu$ as follows:
Taking derivative $\partial_{x_2}$ on \eqref{equ:3.14}, we obtain from a direct calculation that
\begin{eqnarray}\label{E:positive velocity energy}
-\partial_{x_2}\omega
&=&\partial_{x_1}\Big(\frac{\partial_{x_1x_2}\psi}{\rho}\Big)+\partial_{x_2}\Big(\frac{\partial_{x_2x_2}\psi}{\rho}\Big)
  -\partial_{x_1}\Big(\frac{\partial_{x_1}\psi\partial_{x_2}\rho}{\rho^2}\Big)-\partial_{x_2}\Big(\frac{\partial_{x_2}\psi\partial_{x_2}\rho}{\rho^2}\Big),
\end{eqnarray}
where the derivative of vorticity $\omega$ is
\begin{eqnarray*}
-\partial_{x_2}\omega
=\big(\mathbb{B}'-\rho^{\gamma-1}\mathbb{S}'\big)\partial_{x_2}\rho
+\rho \big(\mathbb{B}''-\frac{\rho^{\gamma-1}}{\gamma}\mathbb{S}''\big)\partial_{x_2}\psi.
\end{eqnarray*}

Plugging \eqref{E:2.22} and \eqref{E:2.23} into equation \eqref{E:positive velocity energy},
 we have the following second-order equation for $\cu$:
\begin{equation}\label{equ:velocity positive}
\partial_{x_i}(\bar{a}_{ij}\partial_{x_j}\cu)
+2\bar{c}_i\partial_{x_i}\cu+(\partial_{x_i}\bar{c}_i+\bar{d})\cu=0,
\end{equation}
where
\begin{equation*}
\bar{a}_{11}=\frac{\rho^2(c^2-q^2)+(\partial_{x_1}\psi)^2}{\rho^3(c^2-q^2)},\quad\,
\bar{a}_{12}=\bar{a}_{21}=\frac{\partial_{x_1}\psi\partial_{x_2}\psi}{\rho^3(c^2-q^2)},\quad\,
\bar{a}_{22}=\frac{\rho^2(c^2-q^2)+(\partial_{x_2}\psi)^2}{\rho^3(c^2-q^2)},
\end{equation*}
and
\begin{align*}
&\bar{c}_1=\frac{\partial_{x_1}\psi(\rho^{\gamma}\mathbb{S}'-\rho \mathbb{B}')}{\rho^2(c^2-q^2)},\quad\,
\bar{c}_2=\frac{\partial_{x_2}\psi(\rho^{\gamma}\mathbb{S}'-\rho \mathbb{B}')}{\rho^2(c^2-q^2)},\quad\,
\bar{d}=-\Big(\frac{(\rho^{\gamma-1}\mathbb{S}'- \mathbb{B}')^2}{c^2-q^2}+ (\mathbb{B}''-\frac{\rho^{\gamma-1}}{\gamma}\mathbb{S}'')\Big)\rho.
\end{align*}
To apply the  maximum principle, we introduce $\bar{\cu}=e^{-\alpha x_2}\cu$.
Then we have
\begin{equation}\label{equ:5.18}
\sum\limits_{i,j=1}^2\partial_{x_i}(\bar{a}_{ij}\partial_{x_j}\bar{\cu})
+2\sum\limits_{i=1}^2(\alpha\bar{a}_{2i}+\bar{c}_i)\partial_{x_i}\bar{\cu}
+\big(\alpha^2\bar{a}_{22}+\alpha(\partial_{x_i}\bar{a}_{i2}+b_2+c_2)+\bar{d}+\partial_{x_i}\bar{c}_i\big)\bar{\cu}=0,
\end{equation}
where $\bar{\cu}$ is a nonnegative function. Choosing $\alpha>0$ sufficiently large so that
$$
\alpha^2\bar{a}_{22}+\alpha\big(\partial_{x_i}\bar{a}_{i2}+b_2+c_2\big)+\bar{d}+\partial_{x_i}\bar{c}_i>0,
$$
by the strong maximum principle,
we can show that $\bar{\cu}>0$, which means that $\cu>0$ in $\Omega_L$.
This completes the proof of the proposition.
\end{proof}

\smallskip
From Lemma \ref{lem:4.12}, we have the following corollary.
\begin{corollary}\label{cor:5.10speed}
The speed is of non-degeneracy, i.e. $q>0$ in $\overline{\Omega_L}$.
\end{corollary}

\subsection{Step 4: Existence of solutions in the unbounded domain}
By the Arzel\`{a}-Ascoli lemma and a diagonal procedure, there exists a subsequence $\psi_{k_{l}}$
such that
$$
\psi_{k_{l}}\rightarrow\psi\qquad  \text{in}\ C^{2,\alpha'}(K)\,\,\,
\text{for any compact set}\ K\subset \Omega\ \text{and}\ \alpha'<\alpha.
$$
Here, $\psi$ satisfies the following problem:
$$
\begin{cases}
 \tilde{a}_{11}\partial_{x_1x_1}\psi+\tilde{a}_{12}\partial_{x_1x_2}\psi+\tilde{a}_{22}\partial_{x_2x_2}\psi=\tilde{f}\qquad
    \text{in}\ \Omega,\\[2mm]
\psi|_{x_2=w_1(x_1)}=0,\\[1mm]
\psi|_{x_2=w_2(x_1)}=m,\\
\end{cases}
$$
with the estimates that
$0\le\psi(\bx)\le m$ for  $\bx\in\Omega$
and, for any compact set $K\subset\Omega$,
\begin{equation}\label{3.25}
\|\psi\|_{2,\alpha;K}\leq C\Big(\frac{\|\tilde{f}\|_{\alpha; K}}{\lambda}+\|\psi\|_{0; K}\Big).
\end{equation}

Similarly to the previous argument in \S \ref{subsubsec:5.2.1},
we can obtain  \eqref{ieq:streamline} and \eqref{3.11} for $\bx\in\Omega$.

\subsection{Step 5: Non-degeneracy of the streamlines in the unbounded domain}
We now prove that there exists a constant $\delta>0$ such that
\begin{equation}\label{ine:5.32}
\partial_{x_2}\psi(\bx)\geq \delta>0\qquad \mbox{for }\bx\in\bar{\Omega}.
\end{equation}
From \S \ref{subsec:5.3},
we obtain that $\partial_{x_2}\psi(\bx)\geq0$ for $\bx\in\bar{\Omega}$.
Then, again by equation \eqref{equ:5.18} and the strong maximum principle,
we find that $\partial_{x_2}\psi(\bx)>0$ in $\Omega$.
Therefore, it suffices to show that inequality \eqref{ine:5.32} holds at infinity.

\subsubsection{Non-degeneracy at infinity}\label{subsubsec:non-degeneracy}
From the previous proof, we have shown that, in any compact set $K\subset\Omega$, $\partial_{x_2}\psi$ has a positive lower bound.
We now show that the streamline does not degenerate at the far field,
which means that $\partial_{x_2}\psi$ does not vanish as $x_1$ goes to $\pm\infty$.

Without loss of generality, it suffices to show this for the case that $x_1\to -\infty$, since the argument for the case that $x_1\to \infty$ is similar.
We assume
$$
\varliminf_{x_2\rightarrow-\infty}\partial_{x_2}\psi
=\varliminf_{x_1\rightarrow-\infty}\inf_{w_1(x_1)\le x_2\le w_2(x_1)}\partial_{x_2}\psi(x_1, x_2)=0.
$$
This means that there exists a sequence $\{x_1^n\}$ such that
$$
\inf_{w_1(x_1^n)\le x_2\le w_2(x_1^n)}\partial_{x_2}\psi(x_1^n, x_2)\rightarrow 0  \qquad\mbox{as $n\rightarrow\infty$}.
$$
It is convenient to introduce the following new coordinates $\by=(y_1,y_2)$ to flatten the boundaries of the
nozzle:
$$
(y_1, y_2)=(y_1(\bx), y_2(\bx)):=( x_1, \frac{x_2-w_1(x_1)}{w_2(x_1)-w_1(x_1)}).
$$
Then the nozzle becomes $\R\times[0,1]$ in the $\by$--coordinates.
Notice that the coordinate transform is invertible, since
$$
 \mbox{det}\left[\begin{array}{ccc}
\frac{\partial y_1}{\partial x_1}&\, &\frac{\partial y_1}{\partial
x_2}\\[2mm]
\frac{\partial y_2}{\partial x_1}&\,& \frac{\partial y_2}{\partial x_2}
 \end{array}\right] =
 \left| \begin{array}{ccc}
1&\,&0\\[2mm]
-\frac{x_2(w_2'(x_1)-w_1'(x_1))}{(w_2(x_1)-w_1(x_1))^2}&\, &\frac{1}{w_2(x_1)-w_1(x_1)}
 \end{array}\right| = \frac{1}{w_2(x_1)-w_1(x_1)}\neq0.
$$
From the uniform $C^{2, \alpha}$--regularity of the boundary,
we see that the map between $\bx$ and $\by$ is invertible and $C^{2, \alpha}$.

Define $\tilde{\psi}(\by)$ as $\psi(\bx)=\tilde{\psi}(y_1(\bx), y_2(\bx))$,
which is a $C^{2,\alpha}$--function due to the chain rule. Set
$$
\psi^{(n)}(\bx):=\tilde{\psi}(y_1(x_1-x_1^n, x_2), y_2(x_1-x_1^n, x_2)),
\qquad
y_2^n:=y_2(x_1-x_1^n, x_2).
$$
For $\bx\in K=[-1, 1]\times\mathbb{R}\cup\Omega$,
$(\psi^n, y_2^n)$ are uniformly $C^{2,\alpha}$--bounded.
From the Arzel\`{a}-Ascoli lemma, we obtain that
$(\psi^n, y_2^n)\rightarrow (\bar{\psi}, \bar{y_2})$.
Then $\bar{\psi}$ satisfies the following equations for ${\bf x}\in \mathbb{R}\times(0, 1)$:
\begin{eqnarray*}
\begin{cases}
\nabla\bar{\psi}=\bar{\rho}(-\bar{u}_2, \bar{u}_1),\\[1mm]
\mathbb{B}(\bar{\psi})=\frac{1}{2}|\bar{\bu}|^2+\frac{\gamma \bar{p}}{(\gamma-1)\bar{\rho}},\\[1mm]
\mathbb{S}(\bar{\psi})= \frac{\gamma \bar{p}}{(\gamma-1)\bar{\rho}^\gamma},\\[1mm]
\partial_{x_1}\bar{u}_2-\partial_{x_2}\bar{u}_1=-\bar{\rho} \mathbb{B}'(\bar{\psi})+\frac{\bar{\rho}^{\gamma}}{\gamma}\mathbb{S}'(\bar{\psi}),
\end{cases}
\end{eqnarray*}
and the boundary condition: $(\bar{\rho} \bar{u}_2)|_{x_2=j}=0$ for $j=0,1$.
On the upper and lower boundary, $\bar{\psi}$ satisfies that $\bar{\psi}|_{x_2=j}=jm$,
and $\partial_{x_2}\bar{\psi}\ge 0$ in $K$.
Moreover, from the equation and boundary conditions that $\bar{\psi}$ satisfies,
similarly to the previous proof,
we can obtain that $\partial_{x_2}\bar{\psi}$ has the positive lower bound in $K$
via Hopf's lemma along the boundary and the strong maximum principle.
This contradicts the fact that
$$
\inf \partial_{x_2}\psi(x_1^n, x_2)\rightarrow 0 \qquad\mbox{as $n\rightarrow\infty$}.
$$
Thus, the streamline is not degenerate at the far field.
This completes the proof.
\end{proof}

\begin{remark} Estimate \eqref{3.11} in Theorem {\rm \ref{prop:5.1}} implies that
the cut-off function introduced in \eqref{cut-off using} can be removed,
which implies that the solution of Theorem {\rm \ref{prop:5.1}}
is actually a solution of {\bf Problem 3.1($m$)}.
\end{remark}

\section{Further Properties of Smooth Solutions}\label{sec:uniqueness}

\subsection{Asymptotic states at the inlet}\label{subset:5.1far field}
In order to study the uniqueness, the subsonic-sonic limit, and the existence of solutions
with discontinuities, we need a uniform state at the far field such that
the restriction on the largeness of $m$ can be released.
Therefore, we study the far field behavior of the solutions obtained via Theorem \ref{prop:5.1}.
First, we consider the states at the inlet: $x_1=-\infty$.

\begin{lemma}\label{prop:5.2}
The solution obtained in Theorem {\rm \ref{prop:5.1}} satisfies
the asymptotic behavior at the inlet as stated in Theorem {\rm \ref{thm:smooth}}.
\end{lemma}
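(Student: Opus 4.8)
The plan is to extract the inlet behaviour from the solution $\psi$ of Theorem \ref{prop:5.1} by the translation–compactness device already introduced in \S\ref{subsubsec:non-degeneracy}, and then to identify the resulting limit with the one–dimensional profile
\[
\psi_-(x_2)=\int_0^{x_2}(\rho_- u_{1-})(y)\,\dd y
\]
of \eqref{psi-ini}, where $\rho_-(\cdot\,;p_-)$ is built from the inlet data and the constant $p_-$ of \eqref{3.5}. Concretely, I would fix an arbitrary sequence $x_1^n\to-\infty$, pass to the straightened coordinates $\by=(y_1,y_2)=(x_1,\frac{x_2-w_1(x_1)}{w_2(x_1)-w_1(x_1)})$ as in \S\ref{subsubsec:non-degeneracy}, translate $\psi$ by $x_1^n$, and invoke the uniform interior $C^{2,\alpha}$ bounds, the uniform subsonicity \eqref{3.11}, the bound $0\le\psi\le m$, and the uniform non-degeneracy $\partial_{x_2}\psi\ge\delta>0$ coming from \eqref{ine:5.32} and the argument of \S\ref{subsubsec:non-degeneracy}. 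By Arzel\`a--Ascoli and a diagonal argument one extracts a subsequence converging in $C^2_{\mathrm{loc}}$ to a function $\bar\psi$ on the flat strip $\R\times(0,1)$. Since $w_1\to0$, $w_2\to1$ at $-\infty$, and since $0\le\psi\le m$ keeps the argument of $(\mathbb{B},\mathbb{S})$ inside $[0,m]$ where these functions are the original unmodified ones and where the elliptic cut–off is inactive by \eqref{3.11}, the limit $\bar\psi$ solves the genuine equation \eqref{equ:3.18} on $\R\times(0,1)$ with flat walls, the boundary data $\bar\psi|_{x_2=0}=0$ and $\bar\psi|_{x_2=1}=m$, the slip condition on the walls, $\partial_{x_2}\bar\psi\ge\delta$, subsonicity, and the conserved mass flux $m$ from \eqref{cdx-6}.

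The heart of the proof is to show $\bar\psi\equiv\psi_-$. First note that $\psi_-$ is itself a subsonic, $x_1$–independent solution of this strip problem — the elementary fact that the inlet data define a consistent one–dimensional flow: for it $u_2\equiv0$, equation \eqref{equ:3.18} reduces to the Bernoulli relation which holds by construction of $\rho_-$, and $\psi_-(0)=0$, $\psi_-(1)=m$ by \eqref{mass--infty}. I would then pass to the Lagrangian coordinates $(z_1,z_2)=(x_1,\bar\psi)$ — legitimate since $\partial_{x_2}\bar\psi\ge\delta$ — in which the strip becomes $\R\times[0,m]$, the walls become $\{z_2=0\}$ and $\{z_2=m\}$ on which $\bar\varphi:=x_2$ is constant, and $\bar\varphi$ satisfies \eqref{equ:6.14}. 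Hence $\mathcal{V}:=\partial_{z_1}\bar\varphi$ is bounded, vanishes on $\{z_2=0\}\cup\{z_2=m\}$, and, differentiating \eqref{equ:6.14} in $z_1$, solves a uniformly elliptic linear equation. After an exponential substitution in $z_2$ of the type used in \eqref{equ:5.18} to normalise the zeroth–order coefficient, and using the boundary monotonicity conditions \eqref{assumption:3.47ai} near $z_2=0,m$ together with a Phragm\'en--Lindel\"of barrier in the unbounded variable $z_1$, the maximum principle should force $\mathcal{V}\equiv0$, i.e.\ $\bar\varphi=\bar\varphi(z_2)$. Then $\partial_{z_2}\bar p\equiv0$ by \eqref{equ:6.14}, so $\bar p$ is a constant; the width normalisation $\bar\varphi(m)-\bar\varphi(0)=1$ determines this constant uniquely, and it must equal $p_-$ since $p_-$ is precisely the value for which $\psi_-$ realises this normalisation. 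The Bernoulli law and the defining relations then give $\bar\psi=\psi_-$, $\bar u_1=u_{1-}$, $\bar u_2=0$, $\bar\rho=\rho_-(x_2;p_-)$.

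Because every subsequential limit equals the single profile $\psi_-$, the convergence is unconditional: $\psi(x_1,\cdot)\to\psi_-(\cdot)$ in $C^2$ on every compact subset of $(0,1)$ as $x_1\to-\infty$. Translating back through \eqref{psi} yields \eqref{equ:2.59a}--\eqref{equ:2.60a}, with the gradient statements coming from the $C^2_{\mathrm{loc}}$ convergence, $u_2\to0$ from the slip condition on the flattened walls, and the uniformity over $x_2\in K\Subset(0,1)$ from interior Schauder estimates. The outlet behaviour as $x_1\to+\infty$ (with $(0,1)$ replaced by $(a,b)$) follows by the same scheme, using the downstream analogue of \S\ref{subset:asymptotic state} to produce the exit states. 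I expect the main obstacle to be exactly the Liouville-type step $\mathcal{V}\equiv0$: the lower-order term in the equation for $\mathcal{V}$ carries no sign, so one must combine the boundedness of $\mathcal{V}$, its vanishing on the two flat walls, the structural conditions \eqref{assumption:3.47ai}, and a careful barrier argument in $z_1$; the parallel determination of the downstream pressure constant $p_+$ is the second delicate point.
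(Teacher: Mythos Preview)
Your overall architecture is the paper's: translate, pass to a limit $\bar\psi$ on the flat strip, go to Lagrangian coordinates $(z_1,z_2)$, prove $\bar{\mathcal V}:=\partial_{z_1}\bar\varphi\equiv0$, and finally identify the resulting $x_1$--independent profile with $\psi_-$.  The difference lies in how you handle the Liouville step and the identification step, and here you are making your life harder than necessary.

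You anticipate a troublesome zeroth--order term in the equation for $\bar{\mathcal V}$ and propose to neutralise it by an exponential substitution plus a Phragm\'en--Lindel\"of barrier, invoking the boundary conditions \eqref{assumption:3.47ai}.  But there is \emph{no} zeroth--order term.  In the Lagrangian coordinates the coefficients of \eqref{equ:6.14} depend only on $(\nabla_\bz\bar\varphi,z_2)$, because $(\mathbb{B},\mathbb{S})$ are functions of $z_2$ alone; differentiating \eqref{equ:6.14} in the translation--invariant direction $z_1$ therefore produces a pure divergence--form equation
\[
\partial_{z_1}\big(A_{11}\partial_{z_1}\bar{\mathcal V}+A_{12}\partial_{z_2}\bar{\mathcal V}\big)
+\partial_{z_2}\big(A_{21}\partial_{z_1}\bar{\mathcal V}+A_{22}\partial_{z_2}\bar{\mathcal V}\big)=0,
\]
with bounded, uniformly elliptic $A_{ij}$ and no lower--order part.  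The paper then runs the standard Cacciopolli--Poincar\'e energy argument: test with $\eta_L^2\bar{\mathcal V}$ (cutoff in $z_1$), use $\bar{\mathcal V}|_{z_2=0,m}=0$ and Poincar\'e in $z_2$, and obtain
\[
\int_{[-L,L]\times[0,m]}|\nabla_\bz\bar{\mathcal V}|^2\,\dd\bz
\le C\int_{(\{L\le|z_1|\le L+1\})\times[0,m]}|\nabla_\bz\bar{\mathcal V}|^2\,\dd\bz,
\]
with $C$ independent of $L$.  Since $|\nabla_\bz\bar{\mathcal V}|$ is bounded, the right side is bounded, hence $\nabla_\bz\bar{\mathcal V}\in L^2(\R\times[0,m])$, hence the right side tends to $0$, hence $\bar{\mathcal V}\equiv0$.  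This bypasses your barrier construction entirely and does not use \eqref{assumption:3.47ai} at all.  (You may be conflating this with the Eulerian equation \eqref{equ:velocity positive} for $\partial_{x_2}\psi$, which \emph{does} carry a signless zeroth--order term; the point of the Lagrangian formulation here is precisely to avoid that.)

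For the identification of the $x_1$--independent limit, the paper again uses an energy argument rather than a direct computation of the pressure constant: if $\bar\varphi^{(1)},\bar\varphi^{(2)}$ are two $z_1$--independent solutions of $\partial_{z_2}p(\partial_{z_2}\bar\varphi,z_2)=0$ with $\bar\varphi(0)=0$, $\bar\varphi(m)=1$, subtracting and testing with $\bar\varphi^{(1)}-\bar\varphi^{(2)}$ on $[0,m]$ gives $\bar\varphi^{(1)}\equiv\bar\varphi^{(2)}$ immediately.  Your width--normalisation argument would also work, but the energy route is quicker and mirrors the Liouville step.
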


\begin{proof}
First, by \eqref{ine:5.32},
$\partial_{x_2}\bar{\psi}\geq \delta>0$ at the inlet.
Based on this observation, we divide the proof into two steps.

\smallskip
1. We show that $u_2\equiv0$ at both $\pm\infty$.
Let $\bz=(z_1, z_2):=(x_1, \bar{\psi})$ and $\bar{\varphi}:=x_2$.
Then $\bar{\varphi}$ satisfies
\begin{equation}\label{Leqninfity}
\partial_{z_1}\Big(\frac{\partial_{z_1}\bar{\varphi}}{\rho(\nabla_\bz\bar{\varphi}, z_2)\partial_{z_2}\bar{\varphi}}\Big)
+\partial_{z_2}p(\nabla_\bz\bar{\varphi}, z_2)=0.
\end{equation}

To estimate $\bar{u}_2$, it is natural to introduce $\bar{\mathcal{V}}:=\partial_{z_1}\bar{\varphi}$
and the corresponding equation by taking $\partial_{z_1}$ on \eqref{Leqninfity}:
\begin{equation*}
0=
\partial_{z_1}\Big(\frac{\partial_{z_1 z_1}\bar{\varphi}}{\rho(\nabla_\bz\bar{\varphi}, z_2)\partial_{z_2}\bar{\varphi}}\Big)
-\partial_{z_1}\Big(\frac{\partial_{z_1}\bar{\varphi}\partial_{z_1}\rho(\nabla_\bz\bar{\varphi},z_2)}{\rho^2(\nabla_\bz\bar{\varphi}, z_2)\partial_{z_2}\bar{\varphi}}\Big)
+\partial_{z_1}\Big(\frac{\partial_{z_1}\bar{\varphi}\partial_{z_1 z_2}\bar{\varphi}}{\rho(\nabla_\bz\bar{\varphi}, z_2)(\partial_{z_2}\bar{\varphi})^2}\Big)
+\partial^2_{z_1z_2}p(\nabla_\bz\bar{\varphi}, z_2).
\end{equation*}
From the state equation, we have
$$
\partial_{z_1}p(\nabla_\bz\bar{\varphi}, z_2)=c^2\partial_{z_1}\rho(\nabla_\bz\bar{\varphi}, z_2).
$$
For $\partial_{z_1}\rho$, we have
\begin{eqnarray*}
\partial_{z_1}\rho(\nabla_\bz\bar{\varphi}, z_2)
&=&\frac{\partial_{z_1}\bar{\varphi}}{(\partial_{z_2}\bar{\varphi})^2\rho(q^2-c^2)}\partial_{z_1}\bar{\mathcal{V}}
-\frac{(\partial_{z_1}\bar{\varphi})^2+1}{(\partial_{z_2}\bar{\varphi})^3\rho(q^2-c^2)}
\partial_{ z_2}\bar{\mathcal{V}},
\end{eqnarray*}
where $2\rho\mathbb{B}(z_2)-(\gamma+1)\rho^\gamma\mathbb{S}(z_2)=\rho(q^2-c^2)$.
A direct computation yields
\begin{eqnarray*}
&&\partial_{z_1}\Big(\big(-\frac{(\partial_{z_1}\bar{\varphi})^2}{\rho^3(\partial_{z_2}\bar{\varphi})^3(q^2-c^2)}+\frac{1}{\rho\partial_{z_2}\bar{\varphi}}\big)\partial_{z_1}\bar{\mathcal{V}}
+\big(\frac{\partial_{z_1}\bar{\varphi}((\partial_{z_1}\bar{\varphi})^2+1)}{\rho^3(\partial_{z_2}\bar{\varphi})^4(q^2-c^2)}
  +\frac{\partial_{z_1}\bar{\varphi}}{\rho(\nabla_z\bar{\varphi}, z_2)(\partial_{z_2}\bar{\varphi})^2}\big)
\partial_{ z_2}\bar{\mathcal{V}}\Big)\nonumber\\[2mm]
&&+\partial_{z_2}\Big(\frac{c^2\partial_{z_1}\bar{\varphi}}{(\partial_{z_2}\bar{\varphi})^2\rho(q^2-c^2)}\partial_{z_1}\bar{\mathcal{V}}
-\frac{c^2((\partial_{z_1}\bar{\varphi})^2+1)}{(\partial_{z_2}\bar{\varphi})^3\rho(q^2-c^2)}
\partial_{ z_2}\bar{\mathcal{V}}\Big)=0.
\end{eqnarray*}
By the standard Cacciopolli inequality and the Poincar\'{e} inequality, we have
\begin{equation*}
\int_{[-L, L]\times[0, m]}|\nabla_\bz\bar{\mathcal{V}}|^2\, \dd\bz
\le  C \int_{([-L-1, -L]\times[0, m])\cup ([L, L+1]\times[0, m])}|\nabla_\bz\bar{\mathcal{V}}|^2\, \dd\bz,
\end{equation*}
where  $C$ is independent of $L$.
From the fact that $|\nabla_\bz\bar{\mathcal{V}}|$ is bounded in $\mathbb{R}\times [0, m]$,
we can use the above estimate to obtain that $\nabla_\bz\bar{\mathcal{V}}$ is in $L^2$.
This implies
$$
\int_{[-L, L]\times[0, m]}|\nabla_\bz\bar{\mathcal{V}}|^2 \,\dd\bz \rightarrow 0 \qquad\mbox{as $L\rightarrow \infty$}.
$$
Hence, we have
\begin{equation*}
\int_{\mathbb{R}\times[0, m]}|\nabla_\bz\bar{\mathcal{V}}|^2\, \dd\bz \equiv 0.
\end{equation*}
Since  $\bar{\mathcal{V}}=0$ vanishes at $z_2=0$,
we conclude that $\bar{\mathcal{V}}\equiv0$, which leads to $\bar{u}_2\equiv0$.

\smallskip
2. In this step, we show that the limit solution $\bar{\psi}$
with $\bar{\psi}_{x_1}=0$ is unique.
Now $\bar{\psi}$ is a function of $x_2$ only.
If $\bar{\psi}(x_2)$ is not unique, then there are two different solutions $\bar{\psi}^{(i)}(x_2)$ for $i=1, 2$.
We employ the Lagrangian formulation \eqref{Leqninfity} again.
For each $i=1, 2$, the respective potential $\bar{\varphi}^{(i)}$ satisfies
$$
\partial_{z_2}p(\partial_{z_2}\bar{\varphi}^{i}, z_2)=0.
$$
Let $\varphi^{D}=\bar{\varphi}^{(1)}-\bar{\varphi}^{(2)}$.
Then we have
\begin{equation*}
\partial_{z_2}\Big(\int_1^2\frac{c^2\, \dd\tau}{\rho(\partial_{z_2}\bar{\varphi}^{(\tau)}, z_2)(\partial_{z_2}\bar{\varphi}^{(\tau)})^3(q^2-c^2)}
  \,\partial_{z_2}\varphi^{D}\Big)=0,
\end{equation*}
where $\bar{\varphi}^{(\tau)}=(2-\tau)\bar{\varphi}^{(1)}+(\tau-1)\bar{\varphi}^{(2)}$
for $\tau\in (1,2)$.
Multiplying $\varphi^{D}$ and then integrating on $[0, m]$, we have
 $$
 \int_0^m |\partial_{z_2}\varphi^{D}|^2\, \dd z_2\leq 0.
 $$
Since the flow is subsonic, and $\varphi^{D}$ vanishes at $z_2=0$ and $z_2=m$,
we conclude that $\bar{\psi}^{(1)}\equiv\bar{\psi}^{(2)}$, which is a contradiction.
Therefore, the special solution obtained in \S \ref{subset:asymptotic state}
is actually $\bar{\psi}$ by the uniqueness.
\end{proof}

\subsection{Asymptotic states at the outlet}\label{subset:asymptotic state plus}

We now study the asymptotic behavior of solutions obtained via Theorem \ref{prop:5.1}
at the outlet. Following the proof of Lemma \ref{prop:5.2},
we know that, if the asymptotic states exist, then the solutions obtained in Theorem \ref{prop:5.1}
satisfy the asymptotic behavior at the outlet, as stated in Theorem \ref{thm:smooth}.
Therefore, it suffices to consider the existence of the asymptotic states at the outlet.

We introduce a function $x_2(y)$:
$$
x_2(s)\,:\, [0, 1]\rightarrow [a, b],
$$
so that the streamline from the inlet $(-\infty, y)$
flows to the outlet $(\infty, x_2(y))$.
Define  $\tilde{u}_{1+}(y)=u_{1+}(x_2(y))$ and $\tilde{\rho}_{+}(y)=\rho_{+}(x_2(y))$.
From $\eqref{1.5}_1$, we have
\begin{equation}\label{mass-+infty1}
\int_0^y (\rho_-u_{1-})(t)\, \dd t= \int_0^{x_2(y)}(\rho_+u_{1+})(t)\, \dd t.
\end{equation}
Taking $\frac{d}{d y}$ on \eqref{mass-+infty1} yields
\begin{equation}\label{x_2-equation}
\frac{d x_2}{d y}=\frac{\rho_{-}u_{1-}}{\tilde{\rho}_{+}\tilde{u}_{1+}}(y),
\end{equation}
from which
\begin{equation}\label{b-a}
\int_0^1 \frac{\rho_{-}u_{1-}}{\tilde{\rho}_{+}\tilde{u}_{1+}}(y)\, \dd y=b-a.
\end{equation}

First, similar to \eqref{Ber--infty}--\eqref{entropy--infty},
the conservative quantities at the outlet are
\begin{eqnarray}
&& B_-(y)=B_+(x_2(y))= \frac{1}{2}|\tilde{u}_{1+}(y)|^2+\frac{\gamma p_{+}}{(\gamma-1)\tilde{\rho}_+(y)}, \label{Ber-+infty}\\
&& S_-(y)=S_+(x_2(y))=\frac{\gamma p_+}{(\gamma-1)\tilde{\rho}^\gamma_+(y)}. \label{entropy-+infty}
\end{eqnarray}
Then
\begin{eqnarray*}
&& \tilde{\rho}_+(y, p_+)=\big(\frac{\gamma}{\gamma-1}\big)^{\frac{1}{\gamma}}p_+^{\frac{1}{\gamma}} S^{-\frac{1}{\gamma}}_-(y),\\
&& \tilde{u}_{1+}(y, p_+)=\Big(2 B_-(y)-2\big(\frac{\gamma}{\gamma-1}\big)^{1-\frac{1}{\gamma}}p_+^{\frac{\gamma-1}{\gamma}}S^{\frac{1}{\gamma}}_-(y)\Big)^{\frac{1}{2}},
\end{eqnarray*}
where $p_+$ is the constant pressure at the outlet.
For the non-stagnation subsonic flow,
$0<\tilde{u}_{1+}(y, p_+)<\sqrt{2\frac{\gamma-1}{\gamma+1}B_-(y)}$, which implies
\begin{equation}\label{p_+-non-de-cont}
\underline{p}_+<p_+< \overline{p}_+,
\end{equation}
where
\begin{eqnarray*}
&&\overline{p}_+:=
\Big( \frac{1}{2}\big(\frac{\gamma}{\gamma-1}\big)^{\frac{1}{\gamma}-1} \min_{y\in[0,1]}(u_{1-}^2S_-^{-\frac{1}{\gamma}})(y)
   +p_-^{\frac{\gamma-1}{\gamma}}\Big)^{\frac{\gamma}{\gamma-1}},\nonumber\\
&&\underline{p}_+:=\Big(\frac{2}{\gamma+1}\Big)^{\frac{\gamma}{\gamma-1}}
\Big( \frac{1}{2}\big(\frac{\gamma}{\gamma-1}\big)^{\frac{1}{\gamma}-1}
  \max_{y\in[0,1]}(u_{1-}^2 S_-^{-\frac{1}{\gamma}})(y)
  + p_-^{\frac{\gamma-1}{\gamma}}\Big)^{\frac{\gamma}{\gamma-1}}.\nonumber
\end{eqnarray*}	
From \eqref{b-a}, we also have
\begin{equation}\label{x_2-equation1}
b-a=\int_0^1\frac{\rho_{-}u_{1-}}{\tilde{\rho}_{+}\tilde{u}_{1+}}(y)\textrm{d}y.
\end{equation}
In other words, if there exists a non-stagnation subsonic flow for {\bf Problem 2.1($m$)},
then there must have a constant pressure $p_+$ at the outlet satisfying \eqref{p_+-non-de-cont}--\eqref{x_2-equation1}.
Therefore, to show the existence of subsonic flows for {\bf Problem 2.1($m$)},
a necessary condition is to find the constant pressure $p_+$ satisfying \eqref{p_+-non-de-cont}--\eqref{x_2-equation1}.
For this, define
\begin{equation*}
J(p_+; p_-):=\int_0^1 \frac{\rho_{-}u_{1-}}{\tilde{\rho}_{+}\tilde{u}_{1+}}(y) \dd y.
\end{equation*}
In fact, we have the following lemma.

\begin{lemma}
For the given incoming flow $(u_{1-},S_-)$ at the inlet, there exists $\hat{m}>0$ such that,
for any given $m>\hat{m}$,
there is a unique $p_+$ with $p_+\in(\underline{p}_+,\overline{p}_+)$ so that $J(p_+; p_-)=b-a$.
Then density $\rho_+$ and the horizontal velocity $u_{1+}$
at the outlet are uniquely determined by \eqref{Ber-+infty} and \eqref{entropy-+infty}.
 \end{lemma}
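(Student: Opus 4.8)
The plan is to analyze the function $J(p_+;p_-)$ on the interval $(\underline{p}_+,\overline{p}_+)$ and show that it is continuous, strictly monotone in $p_+$, and that its range covers the value $b-a$ once $m$ is large enough; the inverse function theorem (or intermediate value theorem plus strict monotonicity) then yields existence and uniqueness of the root $p_+$. First I would record that, by the explicit formulas derived just above the statement,
$$
\tilde\rho_+(y,p_+)=\Big(\tfrac{\gamma}{\gamma-1}\Big)^{1/\gamma}p_+^{1/\gamma}S_-^{-1/\gamma}(y),
\qquad
\tilde u_{1+}(y,p_+)=\Big(2B_-(y)-2\big(\tfrac{\gamma}{\gamma-1}\big)^{1-1/\gamma}p_+^{(\gamma-1)/\gamma}S_-^{1/\gamma}(y)\Big)^{1/2},
$$
so the integrand $\frac{\rho_-u_{1-}}{\tilde\rho_+\tilde u_{1+}}(y)$ is a smooth function of $p_+$ for $p_+\in(\underline p_+,\overline p_+)$ and each fixed $y\in[0,1]$, and $J(\cdot\,;p_-)\in C^1(\underline p_+,\overline p_+)$ by differentiation under the integral sign (the integrand and its $p_+$-derivative are dominated uniformly in $y$ on compact subintervals).

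Next I would establish strict monotonicity. Differentiating the integrand in $p_+$: increasing $p_+$ increases $\tilde\rho_+$ (exponent $1/\gamma>0$) and, crucially, decreases $\tilde u_{1+}$ since the term subtracted inside the square root grows with $p_+$; hence $\tilde\rho_+\tilde u_{1+}$ — which is the mass flux density $\rho_+u_{1+}$ at the outlet — has a sign of derivative that one reads off from the subsonic condition. In fact, for a fixed Bernoulli function the product $\rho u$ as a function of $\rho$ (equivalently of $p$) is increasing on the subsonic branch and decreasing on the supersonic branch; since we are on the subsonic branch $p_+\in(\underline p_+,\overline p_+)$, $\tilde\rho_+\tilde u_{1+}$ is strictly increasing in $p_+$, so the integrand is strictly decreasing in $p_+$, and therefore $J(\cdot\,;p_-)$ is strictly decreasing on $(\underline p_+,\overline p_+)$.

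Then I would examine the boundary behaviour. As $p_+\to\overline p_+^-$, by the definition of $\overline p_+$ the velocity $\tilde u_{1+}(y,p_+)$ tends to $0$ at the value of $y$ realizing $\min(u_{1-}^2S_-^{-1/\gamma})$, so the integrand blows up there and $J(p_+;p_-)\to+\infty$. As $p_+\to\underline p_+^+$, the flow approaches the sonic state $\tilde u_{1+}=q_{\rm cr}=\sqrt{2\frac{\gamma-1}{\gamma+1}B_-}$ at the value of $y$ realizing the maximum, and there $\tilde\rho_+\tilde u_{1+}$ attains its maximal (sonic) value, which for large $m$ is of order $m^{(\gamma+1)/2}$ while $\rho_-u_{1-}$ is of order $m$; thus $J(\underline p_+;p_-)=O(m^{-(\gamma-1)/2})\to 0$ as $m\to\infty$. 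Consequently, for $m$ large enough, $J(\underline p_+^+;p_-)<b-a<J(\overline p_+^-;p_-)=+\infty$, and by strict monotonicity and continuity there is exactly one $p_+\in(\underline p_+,\overline p_+)$ with $J(p_+;p_-)=b-a$; formulas \eqref{Ber-+infty}--\eqref{entropy-+infty} then pin down $\tilde\rho_+(y)$ and $\tilde u_{1+}(y)$, hence $\rho_+$ and $u_{1+}$ via $x_2(y)$, uniquely.

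The main obstacle is the monotonicity step: one must verify carefully that the mass-flux density $\rho_+u_{1+}$, as a function of $p_+$ along the fixed-Bernoulli curve, is genuinely strictly increasing on the \emph{whole} subsonic range and not merely near the endpoints. This is a one-variable calculus fact (the classical shape of the mass-flux curve), but it must be checked uniformly in $y\in[0,1]$ so that differentiation under the integral is legitimate and the sign of $J'$ is unambiguous. The quantitative estimate $J(\underline p_+;p_-)=O(m^{-(\gamma-1)/2})$ requires the lower bound $\rho_+u_{1+}\ge C^{-1}m^{(\gamma+1)/2}$ at the sonic state, which follows from the scaling relations \eqref{equ:4.10a}--\eqref{equ:4.11a} already used in the proof of Theorem~\ref{prop:5.1}, so $\hat m$ here can be taken as in \S\ref{subset:asymptotic state}.
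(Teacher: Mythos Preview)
Your overall strategy (continuity, strict monotonicity in $p_+$, and endpoint behaviour) matches the paper's, but there is a sign error that makes your argument internally inconsistent. On the subsonic branch with fixed $(B_-,S_-)$, the mass-flux density $\tilde\rho_+\tilde u_{1+}$ is strictly \emph{decreasing} in $p_+$, not increasing: writing $\tilde u_{1+}^2=2B_--C_2\,p_+^{(\gamma-1)/\gamma}$, the subsonic constraint $\tilde u_{1+}^2<\frac{2(\gamma-1)}{\gamma+1}B_-$ is exactly the inequality $C_2\,p_+^{(\gamma-1)/\gamma}>\frac{4B_-}{\gamma+1}$, and a direct differentiation of $(\tilde\rho_+\tilde u_{1+})^2=C_1^2\,p_+^{2/\gamma}\big(2B_--C_2\,p_+^{(\gamma-1)/\gamma}\big)$ shows this is precisely the condition for the derivative to be negative. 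Consequently the integrand and hence $J(\cdot\,;p_-)$ are strictly \emph{increasing} on $(\underline p_+,\overline p_+)$, as the paper asserts. Your own endpoint computations ($J\to 0$ near $\underline p_+$ and $J\to+\infty$ near $\overline p_+$) already contradict a decreasing $J$, so this should have flagged the error.

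A second gap concerns $J(\overline p_+;p_-)=+\infty$. Pointwise blow-up of the integrand at the minimiser $y_0$ of $u_{1-}^2S_-^{-1/\gamma}$ does not by itself force the integral to diverge; if $y_0\in\{0,1\}$ and the one-sided derivative there were nonzero, the singularity would be of order $|y-y_0|^{-1/2}$ and integrable. The paper closes this by invoking assumption \eqref{assumption:3.47a}: if the minimum is at an endpoint, \eqref{assumption:3.47a} forces the derivative of $u_{1-}^2S_-^{-1/\gamma}$ to vanish there, so the $C^{1,1}$ regularity gives a quadratic vanishing and a non-integrable $|y-y_0|^{-1}$ singularity (interior minima give this automatically). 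You should add this step; without it the upper endpoint behaviour is not justified.
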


\begin{proof}
Note that
\begin{align}\nonumber
\frac{\rho_{-}u_{1-}}{\tilde{\rho}_{+}\tilde{u}_{1+}}
=&\frac{\big(\frac{\gamma}{\gamma-1}\big)^{\frac{1}{\gamma}}p_-^{\frac{1}{\gamma}}(u_{1-}S_-^{-\frac{1}{\gamma}})(y)}
{\sqrt{2}\big(\frac{\gamma}{\gamma-1}\big)^{\frac{1}{\gamma}}p_+^{\frac{1}{\gamma}}S^{-\frac{1}{\gamma}}_-(y)
 \big(B_-(y)-(\frac{\gamma}{\gamma-1})^{1-\frac{1}{\gamma}}p_+^{\frac{\gamma-1}{\gamma}}S^{-\frac{1}{\gamma}}_-(y)\big)^{\frac{1}{2}}}\\\nonumber
=&\frac{ \big(\frac{\gamma}{\gamma-1}\big)^{\frac{1-\gamma}{2\gamma}} (u_{1-}S_-^{-\frac{1}{2\gamma}})(y)}
{\sqrt{2}(p_+ p_-^{-1})^{\frac{1}{\gamma}}
\big((\frac{\gamma}{\gamma-1})^{\frac{1}{\gamma}-1} B_-(y)S^{-\frac{1}{\gamma}}_-(y)-p_+^{\frac{\gamma-1}{\gamma}}\big)^{\frac{1}{2}}}.\nonumber
\end{align}
It follows from a direct calculation that $\frac{d J}{d p_+}>0$ for $p_+\in(\underline{p}_+,\overline{p}_+)$.
Therefore, in order to show that $J(p_+; p_-)=b-a$, it suffices to show that
$$
J(\underline{p}_+; p_-)<b-a<J(\overline{p}_+; p_-).
$$

First, by a direct calculation, we have
\begin{align*}
J(\underline{p}_+; p_-)
=\int_0^1  \frac{\big(\frac{\gamma}{\gamma-1}\big)^{\frac{1-\gamma}{2\gamma}}(\underline{p}_+^{-1} p_-)^{\frac{1}{\gamma}} (u_{1-}S_-^{-\frac{1}{2\gamma}})(y)}
{\big((\frac{\gamma}{\gamma-1})^{\frac{1-\gamma}{\gamma}}\big( (u_{1-}^2S_-^{-\frac{1}{\gamma}})(y)
 -\frac{2}{\gamma+1}\max_{y\in[0,1]}(u_{1-}^2 S_-^{-\frac{1}{\gamma}})(y)\big) + \frac{2(\gamma-1)}{\gamma+1}p_-^{\frac{\gamma-1}{\gamma}}\big)^{\frac{1}{2}}}\dd y.\nonumber
\end{align*}
Note that
\begin{align*}
 & \frac{ (u_{1-}S_-^{-\frac{1}{2\gamma}})(y)} {\big((\frac{\gamma}{\gamma-1})^{\frac{1}{\gamma}-1}
  \big((u_{1-}^2S_-^{-\frac{1}{\gamma}})(y)-\frac{2}{\gamma+1}\max_{y\in[0,1]}(u_{1-}^2S_-^{-\frac{1}{\gamma}})(y)\big) + \frac{2(\gamma-1)}{\gamma+1}p_-^{\frac{\gamma-1}{\gamma}}\big)^{\frac{1}{2}}}\nonumber\\
 &=\bigg(\big(\frac{\gamma}{\gamma-1}\big)^{\frac{1-\gamma}{\gamma}} \Big(1-\frac{2}{\gamma+1}\frac{\max_{y\in[0,1]}(u_{1-}^2S_-^{-\frac{1}{\gamma}})(y)}{(u_{1-}^2S_-^{-\frac{1}{\gamma}})(y)}\Big)
   +\frac{2(\gamma-1)}{\gamma+1}\frac{p_-^{\frac{\gamma-1}{\gamma}}}{(u_{1-}^2S_-^{-\frac{1}{\gamma}})(y)}\bigg)^{-\frac{1}{2}},
\end{align*}
and
\begin{equation*}
p_- \underline{p}_+^{-1}
=
\Big(\frac{\gamma+1}{2}\Big)^{\frac{\gamma}{\gamma-1}}
 \Big( \frac{1}{2}(\frac{\gamma}{\gamma-1})^{\frac{1-\gamma}{\gamma}} p_-^{-\frac{\gamma-1}{\gamma}}
  \max_{y\in[0,1]}(u_{1-}^2S_-^{-\frac{1}{\gamma}})(y) + 1 \Big)^{-\frac{\gamma}{\gamma-1}}.
\end{equation*}
Then it can directly be seen that $J(\underline{p}_+; p_-)<a-b$, when $m$ is sufficiently large, \emph{i.e.} $p_-$ is sufficiently large.

\smallskip
Next, let us consider $J(\overline{p}_+; p_-)$.
By the straightforward calculation, we have
$$
J(\overline{p}_+; p_-)
=\int_0^1 (\overline{p}_+^{-1} p_-)^{\frac{1}{\gamma}}
 \Big(1-\frac{\min_{y\in[0,1]}(u_{1-}^2S_-^{-\frac{1}{\gamma}})(y)}{(u_{1-}^2S_-^{-\frac{1}{\gamma}})(y)}\Big)^{-\frac{1}{2}}\dd y,
$$
where
\begin{equation*}
p_-\overline{p}_+^{-1} =\Big(\frac{1}{2}(\frac{\gamma}{\gamma-1})^{\frac{1-\gamma}{\gamma}} p_-^{-\frac{\gamma-1}{\gamma}} \min_{y\in[0,1]}(u_{1-}^2S_-^{-\frac{1}{\gamma}})(y)
+ 1\Big)^{-\frac{\gamma}{\gamma-1}}.
\end{equation*}
For $p_->\hat{p}>0$, the monotonicity implies that $p_- \overline{p}_+^{-1}>C(\hat{p})>0$ for some constant $C(\hat{p})$ depending on $\hat{p}$. Then
\begin{equation*}
J(\overline{p}_+; p_-)>C(\hat{p}) A,
\end{equation*}
where
\begin{equation*}
A:=\int_0^1  \frac{
 	(u_{1-}S_-^{-\frac{1}{2\gamma}})(y)} {\big((u_{1-}^2 S_-^{-\frac{1}{\gamma}} ) (y)
   -\min_{y\in[1,0]}(u_{1-}^2 S_-^{-\frac{1}{\gamma}} ) (y)\big)^{\frac{1}{2}}}\dd y.
 \end{equation*}

We now consider the value of $A$.
If there is $y_0\in(0,1 )$ such
that
$$
(u_{1-}^2 S_-^{-\frac{1}{\gamma}})(y_0)=\min_{y\in[1,0]}(u_{1-}^2 S_-^{-\frac{1}{\gamma}})(y),
$$
the $C^{1, 1}$--regularity implies that $A=\infty$.
When $y_0=0$ is the only minimum point of $(u_{1-}^2 S_-^{-\frac{1}{\gamma}})(y)$,
then $(u_{1-}^2 S_-^{-\frac{1}{\gamma}})'(0)\geq0$.
On the other hand, condition (2.16) requires that $(u_{1-}^2 S_-^{-\frac{1}{\gamma}} )'(0)\leq0$,
which implies the derivative vanishes at $y_0=0$, so that $A=\infty$.
Similarly, when $y_0=1$ is the only minimum point of $(u_{1-}^2 S_-^{-\frac{1}{\gamma}} )(y)$,
then $(u_{1-}^2 S_-^{-\frac{1}{\gamma}} )'(1)\leq0$.
On the other hand, condition (2.16) requires that $(u_{1-}^2 S_-^{-\frac{1}{\gamma}})'(1)\geq0$,
which implies the derivative vanishes at $y_0=1$, so that $A=\infty$.
Therefore, there exists $p_+<\overline{p}_+$ such that $J(p_+; p_-)=b-a$,
since  $J(p_+; p_-)$ is a increasing function of $p_+$.
\end{proof}

\subsection{Uniqueness of general non-degenerate flows}
Based on equation \eqref{equ:6.14}, we are now going to prove the uniqueness of solutions of {\bf Problem 3.1($m$)}.

\begin{lemma}\label{prop:6.1}
The solutions obtained in Theorem {\rm \ref{prop:5.1}} satisfying the asymptotic behavior
as in Theorem {\rm \ref{thm:smooth}} are unique.
\end{lemma}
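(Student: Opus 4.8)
The plan is to pass to the Euler--Lagrange coordinates \eqref{La coordinates}, in which the nozzle becomes the fixed strip $\R\times[0,m]$, and then to run an energy estimate on the difference of two solutions, in the spirit of Step~2 of the proof of Lemma~\ref{prop:5.2} but now with two genuine spatial variables. Working in the Lagrangian variables is convenient because the asymptotic states, though generally non-constant, have already been pinned down (by Lemma~\ref{prop:5.2} at the inlet and by \S\ref{subset:asymptotic state plus} at the outlet), so they enter the boundary terms cleanly, and the domain no longer depends on the solution.

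First I would fix two solutions $\psi^{(1)},\psi^{(2)}$ of \textbf{Problem 3.1($m$)} produced by Theorem~\ref{prop:5.1}, both carrying the asymptotic behavior of Theorem~\ref{thm:smooth}. By the non-degeneracy \eqref{ine:5.32}, each admits the change of variables $\bz=(z_1,z_2)=(x_1,\psi^{(i)}(\bx))$ with $\varphi^{(i)}:=x_2$, and then $\varphi^{(i)}$ solves \eqref{equ:6.14} on $\R\times(0,m)$ with the \emph{common} wall data $\varphi^{(i)}(z_1,0)=w_1(z_1)$ and $\varphi^{(i)}(z_1,m)=w_2(z_1)$, while the coefficients $(\mathbb{B},\mathbb{S})$ in \eqref{equ:6.14} are the same functions of $z_2$ for both, being fixed by $(u_{1-},S_-)$ and $m$. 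Because the two solutions share the same asymptotic states as $z_1\to\pm\infty$, the difference $\varphi^D:=\varphi^{(1)}-\varphi^{(2)}$ satisfies $\varphi^D(z_1,\cdot)\to0$ uniformly in $z_2$ as $z_1\to\pm\infty$; moreover, by the global regularity $\psi^{(i)}\in C^{2,\alpha}(\overline{\Omega})$ from Theorem~\ref{prop:5.1} together with \eqref{ine:5.32} and \eqref{3.11}, the pairs $(\nabla_\bz\varphi^{(i)}(\bz),z_2)$ stay in a fixed compact subset of the subsonic region, and $\nabla_\bz\varphi^{(i)}$ are uniformly bounded on $\R\times[0,m]$.

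Next I would write \eqref{equ:6.14} as $\mathrm{div}_\bz\,F(\nabla_\bz\varphi,z_2)=0$, with $F=(F_1,F_2)$, $F_1=\dfrac{\partial_{z_1}\varphi}{\rho(\nabla_\bz\varphi,z_2)\,\partial_{z_2}\varphi}$ and $F_2=p(\nabla_\bz\varphi,z_2)$, subtract the two copies of this equation, multiply by $\varphi^D$, and integrate over $Q_L:=[-L,L]\times[0,m]$; integration by parts yields
\begin{equation*}
\int_{Q_L}\big(F(\nabla_\bz\varphi^{(1)},z_2)-F(\nabla_\bz\varphi^{(2)},z_2)\big)\cdot\nabla_\bz\varphi^D\,\dd\bz
=\int_{\partial Q_L}\varphi^D\,\big(F(\nabla_\bz\varphi^{(1)},z_2)-F(\nabla_\bz\varphi^{(2)},z_2)\big)\cdot\nnu\,\dd s .
\end{equation*}
On $z_2=0,m$ the right-hand integrand vanishes since $\varphi^D=0$ there, and on $z_1=\pm L$ it is bounded by $C\,|\varphi^D(\pm L,z_2)|$, which tends to $0$ uniformly in $z_2$ as $L\to\infty$. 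For the left-hand side, the decisive fact is the strict monotonicity
\begin{equation*}
\big(F(\xi,z_2)-F(\eta,z_2)\big)\cdot(\xi-\eta)\ \ge\ \lambda_0\,|\xi-\eta|^2
\end{equation*}
for subsonic states $\xi,\eta$, with $\lambda_0>0$ uniform on the compact set found above; this is the Lagrangian counterpart of the ellipticity of \eqref{equ:6.14}, and reflects that $F(\cdot,z_2)$ is the $\xi$--gradient of a functional that is strictly convex, with a uniform modulus, on the subsonic region cut out by \eqref{subsonic0}. Therefore $\lambda_0\int_{Q_L}|\nabla_\bz\varphi^D|^2\,\dd\bz$ is dominated by a quantity that vanishes as $L\to\infty$, so $\nabla_\bz\varphi^D\equiv0$ on $\R\times[0,m]$; hence $\varphi^D$ is constant, and since it vanishes on the walls, $\varphi^D\equiv0$, which gives $\psi^{(1)}=\psi^{(2)}$.

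I expect the main technical point to be establishing the uniform monotonicity of $F(\cdot,z_2)$ — equivalently, the strict convexity with a uniform modulus of the associated energy on the subsonic set — with a constant $\lambda_0$ that does not deteriorate as $z_1\to\pm\infty$; this is where the a priori bounds \eqref{3.11} and \eqref{ine:5.32}, valid on all of $\Omega$ and inherited by the far-field limits, are essential. A secondary bookkeeping point is to make the gradient bounds on $\varphi^{(1)},\varphi^{(2)}$ genuinely uniform up to the nozzle walls and at infinity, so that the boundary terms above indeed vanish in the limit.
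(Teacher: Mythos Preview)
Your proposal is correct and follows essentially the same route as the paper: pass to Lagrangian coordinates $\bz=(z_1,z_2)$, exploit that both solutions satisfy \eqref{equ:6.14} on the fixed strip $\R\times[0,m]$ with identical wall data and identical $(\mathbb{B},\mathbb{S})(z_2)$, and run an energy estimate on the difference $\hat\varphi=\varphi^{(1)}-\varphi^{(2)}$ using the ellipticity (equivalently, the strict monotonicity of $F$) in the subsonic regime.

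The only notable difference is in how the unbounded $z_1$--direction is handled. You integrate by parts over $[-L,L]\times[0,m]$ and let the lateral boundary terms die via the asymptotic behavior $\varphi^D(\pm L,\cdot)\to0$ assumed in the hypothesis; the paper instead multiplies by $\eta_L^2\hat\varphi$ with a smooth cutoff and derives a Caccioppoli inequality, then uses the global boundedness of $|\nabla_\bz\hat\varphi|$ (linear growth of the Dirichlet integral) against the exponential growth the Caccioppoli recursion would force if $\nabla_\bz\hat\varphi\not\equiv0$. Your version is a touch more direct and leans on the asymptotic hypothesis explicitly; the paper's version is slightly more self-contained and would work even without knowing $\varphi^D\to0$ at infinity. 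Both are valid, and your identification of $F$ as the gradient of a strictly convex functional (hence symmetric Jacobian $\hat a_{12}=\hat a_{21}$) is correct, although the paper writes the off-diagonal entries in ostensibly asymmetric form.
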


\begin{proof}
Under the non-degenerate streamline condition \eqref{ine:5.32},
the uniqueness of flows in the $\bx$--coordinates is equivalent
to the uniqueness of flows in the Lagrangian coordinates.

Let $(\rho^{(1)}, u_1^{(1)}, u_2^{(1)}, p^{(1)})$ and $(\rho^{(2)}, u_1^{(2)}, u_2^{(2)}, p^{(2)})$
be two solutions of {\bf Problem 3.1($m$)}.
Then we introduce the corresponding Lagrangian coordinates for these two solutions as $\varphi^{(1)}$ and $\varphi^{(2)}$
so that
\begin{eqnarray*}
\partial_{z_1}\Big(\frac{\partial_{z_1}\varphi^{(i)}}{\rho(\nabla_\bz\varphi^{(i)}, z_2)\partial_{z_2}\varphi^{(i)}}\Big)+\partial_{z_2}p(\nabla_\bz\varphi^{(i)}, z_2)=0
\qquad\,\,\,\mbox{for $i=1,2$}.
\end{eqnarray*}
Subtracting these two equations, we find that $\hat{\varphi}=\varphi^{(1)}-\varphi^{(2)}$ satisfies  the following equation:
\begin{equation*}
\partial_{z_1}\big(\hat{a}_{11}\partial_{z_1}\hat{\varphi}+\hat{a}_{12}\partial_{z_2}\hat{\varphi}\big)+\partial_{z_2}\big(\hat{a}_{21}\partial_{z_1}\hat{\varphi}+\hat{a}_{22}\partial_{z_2}\hat{\varphi}\big)
=0,
\end{equation*}
where
\begin{eqnarray*}
&&\hat{a}_{11}=\int_1^2\Big(\frac{1}{\rho(\nabla_\bz\varphi^{(\tau)}, z_2)\partial_{z_2}\varphi^{(\tau)}}
    -\frac{(\partial_{z_1}\varphi^{(\tau)})^2}{\rho^3(\nabla_\bz\varphi^{(\tau)}, z_2)(\partial_{z_2}\varphi^{(\tau)})^3\big((q^{(\tau)})^2-(c^{(\tau)})^2\big)}\Big)\, \dd \tau, \\[2mm]
&&\hat{a}_{12}=-\int_1^2\Big(\frac{\partial_{z_1}\varphi^{(\tau)}}{\rho(\nabla_\bz\varphi^{(\tau)}, z_2)(\partial_{z_2}\varphi^{(\tau)})^2}
-\frac{\partial_{z_1}\varphi^{(\tau)}\big((\partial_{z_1}\varphi^{(\tau)})^2+1\big)}{\rho^3(\nabla_\bz\varphi^{(\tau)},z_2)(\partial_{z_2}\varphi^{(\tau)})^4\big((q^{(\tau)})^2-(c^{(\tau)})^2\big)}\Big)\, \dd \tau,\\[2mm]
&&\hat{a}_{21}=\int_1^2\frac{c^2\,\partial_{z_1}\varphi^{(\tau)}}{\rho(\nabla_\bz\varphi^{(\tau)}, z_2)(\partial_{z_2}\varphi^{(\tau)})^2\big((q^{(\tau)})^2-(c^{(\tau)})^2\big)}\, \dd \tau,\\[2mm]
&&\hat{a}_{22}=-\int_1^2\frac{c^2\big((\partial_{z_1}\varphi^{(\tau)})^2+1\big)}{\rho(\nabla_\bz\varphi^{(\tau)}, z_2)(\partial_{z_2}\varphi^{(\tau)})^3\big((q^{(\tau)})^2-(c^{(\tau)})^2\big)}\,\dd \tau,
\end{eqnarray*}
and
$
\varphi^{(\tau)}=(2-\tau)\varphi^{(1)}+(\tau-1)\varphi^{(2)}
$
for some $\tau\in(1,2)$, and $(q^{(\tau)})^2$ and $(c^{(\tau)})^2$ are defined in the same way as for $\varphi^{(\tau)}$.

On the other hand, noticing that $\hat{\varphi}=0$ on $z_2=0,m$, we  have the Poinc\'{a}re inequality:
\begin{equation*}
\int_0^m|\hat{\varphi}(\bz)|^2dz_2\le C(m)\int_0^m|\partial_{z_2}\hat{\varphi}(\bz)|^2\dd z_2,
\end{equation*}
where $C(m)>0$ depends only on $m$.

Now, we can select a smooth cut-off function $\eta_L(z_1)$ so that
$\eta_L\equiv1$ when $|z_1|\le L$ and $\eta_L\equiv0$ when $|z_1|\ge L+1$.
Taking  $\eta_L^2\hat{\varphi}$ as the test function and using the standard Cacciopolli inequality, we have
\begin{equation*}
\int_{[-L, L]\times[0, m]}|\nabla_\bz\hat{\varphi}|^2 \, \dd\bz
\le C(m)\int_{[-L-1, -L]\times[0, m]\cup[L, L+1]\times[0, m]}|\nabla_\bz\hat{\varphi}|^2\, \dd\bz,
\end{equation*}
where $C(m)$ is independent of $L$.
From the fact that $|\nabla_\bz\hat{\varphi}|$ is bounded in $\mathbb{R}\times [0, m]$,
we can use the above estimate to show that $\nabla_\bz\hat{\varphi}$ in $L^2$. This implies
$$
\int_{[-L-1, -L]\times[0, m]\cup[L, L+1]\times[0, m]}|\nabla_\bz\hat{\varphi}|^2\, \dd\bz \rightarrow 0 \qquad\mbox{as $L\rightarrow \infty$},
$$
and then
$$
\int_{[-L, L]\times[0, m]}|\nabla_\bz\hat{\varphi}|^2\, \dd\bz \rightarrow 0  \qquad\mbox{as $L\rightarrow \infty$}.
$$
Therefore, we obtain
\begin{equation*}
\iint_{\mathbb{R}\times[0, m]}|\nabla_\bz\hat{\varphi}|^2 \dd z_1 \dd z_2 \equiv 0.
\end{equation*}
Since $\hat{\varphi}$ vanishes at $z_2=0$, we conclude that $\hat{\varphi}\equiv0$.
This shows the uniqueness of non-degenerate flows of the system.
\end{proof}

\subsection{Uniform estimate on $\theta$ and $p$}
Based on the non-degeneracy of the speed from Corollary \ref{cor:5.10speed},
we consider the maximum principle of the direction of velocities and the pressure via the second-order equations
of $\theta$ and $p$, respectively.
This is crucial for our later study of the existence of weak solutions with vortex sheets
and entropy waves via the compensated compactness framework in \cite{ChenHuangWang}.

\begin{lemma}\label{lem:5.4qmax}
	For the given solution $\psi$, let
	\begin{equation}\label{5.13a}
	(q \cos\theta,q \sin\theta):=(u_1, u_2).
	\end{equation}
Then both $\theta$ and $p$ satisfy the maximum principle at the interior point{\rm ;} that is,
$\theta$ and $p$ cannot attain the maximum at the interior point
so that
\begin{eqnarray}
&&\min_{\partial\Omega}p=\min_{\Omega}p\leq p\leq\max_{\Omega}p=\max_{\partial\Omega}p,\label{est:theta5.20a}\\
&&\min_{\partial\Omega}\theta=\min_{\Omega}\theta\leq\theta\leq\max_{\Omega}\theta=\max_{\partial\Omega}\theta.\label{est:theta5.20}
\end{eqnarray}
\end{lemma}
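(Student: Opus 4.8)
The plan is to derive closed second-order elliptic equations for $p$ and for $\theta$ in the flow region, using the steady full Euler system together with the non-degeneracy $q>0$ from Corollary~\ref{cor:5.10speed} and the subsonicity $M<1$, and then to argue that neither equation admits an interior maximum (or minimum) unless the function is constant. Concretely, I would pass to the streamline--normal description: write $\bu=(q\cos\theta,q\sin\theta)$ as in \eqref{5.13a}, and use that $B$ and $S$ are constant along streamlines, i.e. $B=\mathbb{B}(\psi)$ and $S=\mathbb{S}(\psi)$. From Bernoulli's law \eqref{ber} one expresses $q$ (and hence $\rho,c$) as a function of $p$ and $\psi$ alone, so that the unknowns reduce effectively to $(p,\theta,\psi)$. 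Differentiating the momentum equations $(\ref{1.5})_2$--$(\ref{1.5})_3$ and eliminating the vorticity term by \eqref{1.11} yields a first-order system in $(p,\theta)$ whose principal part, because the flow is subsonic, is of Cauchy--Riemann (elliptic) type; cross-differentiating gives a scalar second-order equation
\begin{equation*}
a_{ij}\,\partial_{x_i x_j}p + b_i\,\partial_{x_i}p = 0,
\end{equation*}
with $(a_{ij})$ uniformly positive definite on compact sets (the ellipticity constant controlled by $1-M^2>0$ and $q>0$) and bounded lower-order coefficients; similarly a scalar equation for $\theta$ with no zeroth-order term. The point is that the lower-order terms arising from $\mathbb{B}'(\psi),\mathbb{S}'(\psi)$ enter only through first derivatives of $p$ and $\theta$, so that the zeroth-order coefficient is identically zero in both equations; hence the strong maximum principle of Hopf applies directly.

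The main steps, in order, are: (1) use \eqref{ber}--\eqref{ent} and $B=\mathbb{B}(\psi)$, $S=\mathbb{S}(\psi)$ to eliminate $\rho,q,c$ in favor of $(p,\psi)$, and rewrite $(\ref{1.5})_2$--$(\ref{1.5})_3$ as a first-order system for $(p,\theta)$ along the streamline and normal directions; (2) verify that the principal symbol is elliptic precisely when $M<1$, which holds on $\overline\Omega$ by \eqref{ieq:subsonic}; (3) cross-differentiate to obtain the scalar second-order equations for $p$ and $\theta$, checking that the coefficient of the undifferentiated unknown vanishes (this is where the structure of steady Euler — Crocco's relation, essentially \eqref{1.11} — is used); (4) invoke the strong maximum principle and the fact that $\Omega$ is connected with boundary $\partial\Omega=W_1\cup W_2$ (plus, if one wants interior extrema excluded on a bounded piece, an exhaustion by $\Omega_L$) to conclude that $p$ and $\theta$ attain their extrema only on $\partial\Omega$, giving \eqref{est:theta5.20a}--\eqref{est:theta5.20}. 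One should also note that $q>0$ guarantees $\theta$ is well-defined and smooth wherever $\psi$ is, so the equation for $\theta$ is genuinely valid in the interior.

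The main obstacle I expect is step (3): carrying out the elimination cleanly enough that the zeroth-order coefficient is manifestly zero, without spurious terms from the $\psi$-dependence of $\mathbb{B},\mathbb{S}$. One has to be careful that derivatives of $\mathbb{B}(\psi),\mathbb{S}(\psi)$ produce factors of $\nabla\psi=\rho(-u_2,u_1)$, which are tangent to streamlines and therefore combine with the transport structure rather than contributing an undifferentiated $p$ or $\theta$ term; making this cancellation explicit is the delicate part. A secondary technical point is the uniform ellipticity: on $\overline\Omega$ the Mach number is bounded away from $1$ only if one already has a uniform subsonic bound, which for the cut-off problem is \eqref{3.11}; on compact sets this is automatic, and the extension to the unbounded nozzle follows by the same exhaustion argument already used repeatedly in \S\ref{sec:existence smooth}. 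Once the two scalar equations are in hand with vanishing zeroth-order term, the maximum principle argument is routine, and the stated two-sided bounds \eqref{est:theta5.20a}--\eqref{est:theta5.20} — and in particular \eqref{2.61b} and \eqref{equ:2.61a} in Theorem~\ref{thm:smooth} — follow immediately.
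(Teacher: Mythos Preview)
Your proposal is correct and follows essentially the same route as the paper: one reduces to a first-order system in $(p,\theta)$ (the paper uses continuity plus the vorticity equation rather than the momentum equations directly, but this is equivalent), then cross-differentiates to obtain divergence-form second-order equations for $p$ and $\theta$ with no zeroth-order term, and applies the strong maximum principle. The cancellation you flag as the main obstacle in step~(3) is exactly what happens: writing $\partial_{x_i}q=-\tfrac{1}{\rho q}\partial_{x_i}p-\tfrac{\partial_{x_i}\psi}{\rho q}\,\omega$, the $\omega$-contribution vanishes in the streamline direction since $\cos\theta\,\partial_{x_1}\psi+\sin\theta\,\partial_{x_2}\psi=0$, and in the normal direction it cancels against the right-hand side of the vorticity equation via $-\sin\theta\,\partial_{x_1}\psi+\cos\theta\,\partial_{x_2}\psi=\rho q$.
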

	
For the notational simplicity, inequality \eqref{est:theta5.20} is denoted
as $|\bar{\theta}|\leq\theta_B$ later on.

\begin{proof} We divide the proof into four steps.

\smallskip
1. Consider the equations of the Euler system corresponding to the genuinely nonlinear characteristic fields:
	\begin{eqnarray}\label{A.5}
	\begin{cases}
	\partial_{x_1}(\rho u_1)+\partial_{x_2}(\rho u_2)=0,\\[1mm]
	\partial_{x_1}u_2-\partial_{x_2}u_1=\omega.
	\end{cases}
	\end{eqnarray}
From $(\ref{A.5})_1$ and the Bernoulli law \eqref{E:2.22}, we have
	\begin{eqnarray*}
	(1-M^2)\cos\theta\,\partial_{x_1} q - q \sin\theta\,\partial_{x_1} \theta +(1-M^2)\sin\theta\,\partial_{x_2} q + q \cos\theta\,\partial_{x_2} \theta=0.
	\end{eqnarray*}
Here we observe that $(\mathbb{B}', \mathbb{S}')$ do not explicitly appear in the equation above.
	From $(\ref{A.5})_2$, we have
	\begin{equation*}
	\omega
	=\partial_{x_1}(q \sin\theta)-\partial_{x_2}(q \cos\theta)
	=\sin\theta\,\partial_{x_1}q+q\cos\theta\,\partial_{x_1}\theta -\cos\theta\,\partial_{x_2} q +q\sin\theta\,\partial_{x_2} \theta.
	\end{equation*}
	
\smallskip	
2. We now compute the relationship between $\nabla q$ and $\nabla p$. Recall the definition of $(B, S)$:
\begin{equation*}
S:=\mathbb{S}(\psi)=\frac{\gamma p}{(\gamma-1)\rho^\gamma},
\qquad\,\,
B:=\mathbb{B}(\psi)=\frac{q^2}{2}+\frac{\gamma p}{(\gamma-1)\rho}.
\end{equation*}
Taking $\partial_{x_i}$ on the first equation, we have
\begin{equation*}
\mathbb{S}'\partial_{x_i}\psi
=\frac{\gamma}{(\gamma-1)\rho^\gamma}\partial_{x_i}p-\frac{\gamma^2 p}{(\gamma-1)\rho^{\gamma+1}}\partial_{x_i}\rho,
\end{equation*}
which is equivalent to
\begin{equation*}
-\frac{\gamma p}{(\gamma-1)\rho^2}\partial_{x_i}\rho=\frac{1}{\gamma}\rho^{\gamma-1}\mathbb{S}'\partial_{x_i}\psi-\frac{1 }{(\gamma-1)\rho}\partial_{x_i}p.
\end{equation*}
Taking $\partial_{x_i}$ on the second equation, we have
\begin{eqnarray*}
\mathbb{B}'\partial_{x_i}\psi&=&q\partial_{x_i}q+\frac{\gamma }{(\gamma-1)\rho}\partial_{x_i}p-\frac{\gamma p}{(\gamma-1)\rho^2}\partial_{x_i}\rho\nonumber\\
&=&q\partial_{x_i}q+\frac{1 }{\rho}\partial_{x_i}p+\frac{1}{\gamma}\rho^{\gamma-1}\mathbb{S}'\partial_{x_i}\psi,
\end{eqnarray*}
which is equivalent to
\begin{eqnarray*}
\partial_{x_i} q
=-\frac{1}{\rho q}\partial_{x_i} p-\frac{\partial_{x_i}\psi}{q\rho}\omega,
\end{eqnarray*}
where we have used the identity that $\omega=-\rho\mathbb{B}'+\frac{1}{\gamma}\rho^{\gamma}\mathbb{S}'$.
Now we plug the above identity into the two equations in \eqref{A.5}: For the first equation,
we note that $\cos\theta\,\partial_{x_1}\psi+\sin\theta\,\partial_{x_2}\psi=0$ to obtain
\begin{equation}\label{5.33a}
-\rho q \sin\theta\,\partial_{x_1} \theta+\rho q \cos\theta\,\partial_{x_2} \theta-\frac{(1-M^2)}{q}\cos\theta\,\partial_{x_1} p
-\frac{(1-M^2)}{ q}\sin\theta\,\partial_{x_2} p=0;
\end{equation}
For the second equation, noting that  $-\sin\theta\,\partial_{x_1} \psi+\cos\theta\,\partial_{x_2} \psi=\rho q$, we have
\begin{equation}\label{5.34a}
q\cos\theta\,\partial_{x_1}\theta +q\sin\theta\,\partial_{x_2} \theta-\frac{\sin\theta}{\rho q}\partial_{x_1}p
+\frac{\cos\theta}{\rho q}\partial_{x_2}p=0.
\end{equation}

Combining \eqref{5.33a}--\eqref{5.34a} together yields
\begin{eqnarray}\label{5.35}
\begin{cases}
\sin\theta\,\partial_{x_1} \theta-\cos\theta\,\partial_{x_2} \theta
  =-\frac{1-M^2}{ \rho q^2}\cos\theta\,\partial_{x_1} p-\frac{1-M^2}{\rho q^2}\sin\theta\,\partial_{x_2} p,\\[2mm]
\cos\theta\,\partial_{x_1}\theta +\sin\theta\,\partial_{x_2} \theta=\frac{\sin\theta}{\rho q^2}\partial_{x_1}p-\frac{\cos\theta}{\rho q^2}\partial_{x_2}p.
\end{cases}
\end{eqnarray}

\smallskip
3. We can now solve \eqref{5.35} for $\nabla\theta$ to obtain
\begin{eqnarray*}
\begin{cases}
\partial_{x_1}\theta= \frac{M^2}{2\rho q^2}\sin(2\theta)\partial_{x_1} p - \frac{1}{\rho q^2}\big(1-M^2\sin^2\theta\big)\partial_{x_2} p,\\[2mm]
\partial_{x_2}\theta=\frac{1}{\rho q^2}\big(1-M^2\cos^2\theta\big)\partial_{x_1}p-\frac{M^2}{2\rho q^2}\sin(2\theta)\partial_{x_2}p.
\end{cases}
\end{eqnarray*}
Then, by applying the identity that $\partial_{x_1x_2}\theta=\partial_{x_1x_2}\theta$, we have the following equation of second-order for $p$:
\begin{eqnarray}
&&\partial_{x_1}\left(\frac{1}{\rho q^2}\big(1-M^2\cos^2\theta\big)\partial_{x_1}p-\frac{M^2}{2\rho q^2}\sin(2\theta)\partial_{x_2}p\right)\nonumber\\[1mm]
&&=\partial_{x_2}\left(\frac{M^2}{2\rho q^2}\sin(2\theta)\partial_{x_1} p - \frac{1}{\rho q^2}\big(1-M^2\sin^2\theta\big)\partial_{x_2} p\right).
\label{5.35a}
\end{eqnarray}
Equation \eqref{5.35a} is strictly elliptic, provided that the flow is subsonic without stagnation points.
Therefore, the maximum principle implies that $p$ can not attain its maximum and minimum at any interior point, which implies
\eqref{est:theta5.20a}.

\medskip
4. We can also solve \eqref{5.35} for $\nabla p$ to obtain
\begin{eqnarray*}
\begin{cases}
\partial_{x_1} p=-\frac{M^2}{2(1-M^2)}\rho q^2 \sin(2\theta)\partial_{x_1}\theta
  +\frac{\rho q^2}{1-M^2}\big(1-M^2\sin^2\theta\big)\partial_{x_2} \theta\\[2mm]
\partial_{x_2} p= -\frac{\rho q^2}{1-M^2}\big(1-M^2\cos^2\theta\big)\partial_{x_1} \theta
 +\frac{M^2}{2(1-M^2)}\rho q^2\sin(2\theta)\partial_{x_2} \theta.
\end{cases}
\end{eqnarray*}

Then, by applying the identity that $\partial_{x_1x_2}p=\partial_{x_1x_2}p$, we have the following equation of second-order for $\theta$:
\begin{eqnarray}
&&\partial_{x_1}\left( -\frac{\rho q^2}{1-M^2}\big(1-M^2\cos^2\theta\big)\partial_{x_1} \theta
 +\frac{M^2}{2(1-M^2)}\rho q^2\sin(2\theta)\partial_{x_2} \theta\right)\nonumber\\[1mm]
&&=\partial_{x_2}\left(-\frac{M^2}{2(1-M^2)}\rho q^2 \sin(2\theta)\partial_{x_1}\theta
  +\frac{\rho q^2}{1-M^2}\big(1-M^2\sin^2\theta\big)\partial_{x_2} \theta\right).\label{5.35b}
\end{eqnarray}
Equation \eqref{5.35b} is also strictly elliptic, provided that the flow is subsonic without stagnation points.
Then, again, the maximum principle implies that $\theta$ can not attain its maximum and minimum at any interior point.
This completes the proof of \eqref{est:theta5.20}.
\end{proof}

\subsection{Uniform estimates of $|\nabla\psi|$}
Based on the maximum principle for the pressure in Lemma \ref{lem:5.4qmax},
in this section, we  show the subsonicity of the Euler flows such that  the Bers skill in \cite{Bers2} can be applied to release
the assumption of the largeness of flux $m$ to the critical value $\tilde{m}_{\rm c}$ which depends only on $(w_1,w_2)$ (the boundary of the nozzle),
$(u_-, S_-)$ at the inlet,
and $(u_-', S_-')$ away from the discontinuity.

\begin{lemma}\label{lem:6.5q}
There exists $\tilde{m}_{\rm c}>0$, depending only on $(w_1,w_2)$, $(u_-,S_-)$ at the inlet, and $(u_-', S_-')$ near the
walls {\rm (}i.e. $x_2=0$ and $x_2=1${\rm )},
such that, for any given $m>\tilde{m}_{\rm c}$, the solutions satisfy \eqref{3.11}.
\end{lemma}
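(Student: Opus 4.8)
The plan is to re-derive the quantitative subsonicity \eqref{3.11} using the maximum principle for the pressure from Lemma \ref{lem:5.4qmax} in place of the gradient estimates behind Theorem \ref{prop:5.1}: in \eqref{3.11} the threshold $\tilde m$ is controlled by $\|\tilde f\|_0/\lambda$, hence ultimately by bounds on $(\mathbb B',\mathbb S')$, whereas the second-order equations \eqref{5.35a}--\eqref{5.35b} for $p$ and $\theta$ do not involve $(\mathbb B',\mathbb S')$ at all. First I would put \eqref{3.11} into pointwise form. Using Bernoulli's law \eqref{E:2.22}, the definition \eqref{hat-Q} of $\hat Q$, and the identities $c^2=(\gamma-1)^{1/\gamma}(\gamma p)^{\frac{\gamma-1}{\gamma}}\mathbb S(\psi)^{1/\gamma}$ and $q^2=2\mathbb B(\psi)-\tfrac{2}{\gamma-1}c^2$, one checks that \eqref{3.11} holds for some $\v>0$ if and only if there is $\eta>0$ with $c^2(\bx)\ge(1+\eta)\tfrac{2(\gamma-1)}{\gamma+1}\mathbb B(\psi(\bx))$ on $\overline\Omega$, equivalently
\[
p(\bx)\;\ge\;(1+\eta)^{\frac{\gamma}{\gamma-1}}\,\frac{\gamma-1}{\gamma}\Big(\frac{2}{\gamma+1}\Big)^{\frac{\gamma}{\gamma-1}}\Big(\frac{\mathbb B(\psi(\bx))}{\mathbb S(\psi(\bx))^{1/\gamma}}\Big)^{\frac{\gamma}{\gamma-1}}\qquad\text{for all }\bx\in\overline\Omega .
\]

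Next, by Lemma \ref{lem:5.4qmax} we have $p(\bx)\ge\min_{\partial\Omega}p$ on $\overline\Omega$, so it suffices to bound $p$ from below on the walls $W_1\cup W_2$ by a quantity with the correct growth in $m$. On $W_j$ the stream function is constant ($\psi\equiv 0$ on $W_1$, $\psi\equiv m$ on $W_2$), so $(\mathbb B,\mathbb S)$ are frozen at the near--wall inlet values $(B_-(j-1),S_-(j-1))$, the flow is tangential with prescribed angle $\arctan w_j'$, and Bernoulli's law together with the equation of state reduces on $W_j$ to the algebraic relations $p=\tfrac{\gamma-1}{\gamma}S_-(j-1)\,\rho^\gamma$ and $q^2=2B_-(j-1)-2S_-(j-1)\,\rho^{\gamma-1}$. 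Subsonicity alone already forces the strict inequality $p|_{W_j}>\tfrac{\gamma-1}{\gamma}\big(\tfrac{2}{\gamma+1}\big)^{\frac{\gamma}{\gamma-1}}\big(B_-(j-1)S_-(j-1)^{-1/\gamma}\big)^{\frac{\gamma}{\gamma-1}}$; to upgrade this to a quantitative bound with uniform slack I would combine the far-field states, where $p\to p_\pm$ on the walls as $x_1\to\mp\infty$ with $p_\pm\sim m^\gamma$ explicitly by \eqref{3.5} and \S \ref{subset:asymptotic state plus}, with the boundary relation $\partial_\tau p=\tfrac{\rho q^2}{1-M^2}\,\partial_n\theta$ extracted from \eqref{5.35} and a Hopf-type/comparison estimate along the $C^{2,\alpha}$ walls, obtaining $\min_{\partial\Omega}p\ge c_*\,m^\gamma$ with $c_*>0$ depending only on $(w_1,w_2)$, $(u_{1-},S_-)$, and $(u_{1-}',S_-')$ near $x_2=0,1$ (the near--wall derivatives enter because the trace of the solution on $W_j$ is governed by the near--wall inlet profile). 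I expect this wall lower bound, with a slack that does not degenerate, to be the main obstacle: the bare maximum principle only yields $\min_{\partial\Omega}p\le\min(p_-,p_+)$, and one must rule out that $p$ collapses to the sonic threshold somewhere in the interior of a wall, which genuinely requires the boundary equations and the nozzle geometry, not just $L^\infty$ bounds.

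Finally I would close by comparing growth rates in $m$. By \eqref{equ:2.14a}, $\mathbb B(s)\mathbb S(s)^{-1/\gamma}=\tfrac12 (u_{1-}^2S_-^{-1/\gamma})(\psi_-^{-1}(s))+a(m)$ with $a(m)=m^{\gamma-1}\big(\int_0^1 u_{1-}S_-^{-1/\gamma}(y)\,\dd y\big)^{1-\gamma}$, so $\big(\sup_{s\in[0,m]}\mathbb B(s)\mathbb S(s)^{-1/\gamma}\big)^{\frac{\gamma}{\gamma-1}}=\tfrac{\gamma}{\gamma-1}p_-\big(1+O(m^{-(\gamma-1)})\big)$, where the implied constants depend only on the listed data; since $\big(\tfrac{2}{\gamma+1}\big)^{\frac{\gamma}{\gamma-1}}<1$ strictly, one can first fix $\eta>0$ small and then choose $\tilde m_{\rm c}>\hat m$ large, depending only on $(w_1,w_2)$, $(u_{1-},S_-)$ and $(u_{1-}',S_-')$ near the walls, so that for every $m>\tilde m_{\rm c}$ the bound $\min_{\partial\Omega}p\ge c_* m^\gamma$ dominates the right-hand side of the displayed pointwise inequality uniformly on $\overline\Omega$; this is exactly \eqref{3.11}, and with the cut-off thereby inactive the solution solves {\bf Problem 3.1($m$)}.
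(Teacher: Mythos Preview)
Your overall architecture matches the paper: reduce \eqref{3.11} to a pointwise pressure lower bound, invoke the maximum principle \eqref{est:theta5.20a} so that it suffices to bound $p$ from below on the walls, and then compare growth rates in $m$. The paper proceeds exactly this way, defining a uniform sonic threshold $\breve p_0$ (your displayed right-hand side with $\eta=0$, maximized over streamlines) and observing that $p\ge\breve p_0$ everywhere follows from $p|_{\partial\Omega}\ge\breve p_0$, which in turn follows from $|\nabla\psi|\le(\rho q)(\breve p_0)$ on the walls.

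The genuine gap in your proposal is the wall lower bound for $p$. You propose to obtain $\min_{\partial\Omega}p\ge c_*m^\gamma$ via the boundary relation $\partial_\tau p=\tfrac{\rho q^2}{1-M^2}\partial_n\theta$ and a Hopf-type argument, but $\partial_n\theta$ is not controlled without a gradient estimate, and you yourself flag this as the main obstacle without resolving it. The paper closes this step differently and more directly: it returns to the H\"older gradient estimate \eqref{3.23} from the existence theory, but applies it only in a strip $\{m_0<\psi<m\}$ adjacent to $W_2$ (respectively $\{0<\psi<m_0\}$ near $W_1$). In that strip $\tilde f$ involves $(\mathbb B',\mathbb S')$ only on $[m_0,m]$, hence only the near-wall inlet derivatives $(u_{1-}',S_-')$ enter, and one gets $|\nabla\psi|\big|_{W_2}\le C(r_0)\big(1+m+\|\tilde f\|_{0,\{m_0<\psi<m\}}/\lambda\big)=O(m^{\max(1,(\gamma-1)/2)})$. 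On the other hand a direct computation gives $(\rho q)(\breve p_0)=O(m^{(\gamma+1)/2})$, and since $(\gamma+1)/2>\max(1,(\gamma-1)/2)$ the comparison closes for $m$ large depending only on the listed data. Replace your Hopf/boundary-relation step by this strip gradient estimate and your outline becomes the paper's proof.
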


\begin{proof}
For any given $m$, and $(u_-, S_-)$ at the inlet,
we can define $(\mathbb{B}, \mathbb{S})$ that are invariant along each streamline.
For any fixed $(B, S)$, \emph{i.e.} along each streamline,  by the Bernoulli law:
$
B=\frac{q^2}{2}+S \rho^{\gamma-1},
$
we have
\begin{equation*}
\frac{d \rho}{d q}=-\frac{q}{(\gamma-1) S \rho^{\gamma-2}}=-\frac{\rho q}{c^2}<0.
\end{equation*}
Then, for the subsonic flow, we have
\begin{equation*}
\frac{d (\rho q)}{d q}=\rho (1-M^2)>0.
\end{equation*}
Next, note that $\frac{d p}{d\rho}=c^2$, so that
\begin{equation}\label{5.41}
\frac{d p}{d  q}=-\rho q<0, \qquad \,\, \frac{d p}{d (\rho q)}=-\frac{q}{1-M^2}<0.
\end{equation}
Thus, for the subsonic flow, we can use the lower bound of $p$ along each streamline
to control the upper bounds of $|\nabla\psi|$ and $q$ to control the subsonicity.
The critical pressure along each streamline is obtained by adding the constraint
that $c^2=q^2$, that is,
\begin{equation*}
B=\frac{(\gamma+1)c^2}{2(\gamma-1)}=\frac{\gamma+1}{2}\Big(\frac{\gamma}{\gamma-1}\Big)^{\frac{\gamma-1}{\gamma}} p^{\frac{\gamma-1}{\gamma}} S^{\frac{1}{\gamma}}.
\end{equation*}
Therefore, the critical pressure along each streamline (\emph{i.e.} for fixed $(B, S)$) is
\begin{eqnarray*}
\breve{p}
=\frac{\gamma-1}{\gamma}\Big(\frac{2}{\gamma+1}\Big)^{\frac{\gamma}{\gamma-1}}(S^{-\frac{1}{\gamma}}B)^{\frac{\gamma}{\gamma-1}}.
\end{eqnarray*}
Assume that the corresponding values of $(B,S)$ at the inlet which share the same streamlines are $(B_-, S_-)$. Then
\begin{eqnarray*}
\breve{p}
&=&\frac{\gamma-1}{\gamma}\Big(\frac{2}{\gamma+1}\Big)^{\frac{\gamma}{\gamma-1}}(S_-^{-\frac{1}{\gamma}}B_-)^{\frac{\gamma}{\gamma-1}}\nonumber\\
&=&\frac{\gamma-1}{\gamma}\Big(\frac{2}{\gamma+1}\Big)^{\frac{\gamma}{\gamma-1}}\Big(\frac{1}{2} u_{1-}^2 S_-^{-\frac{1}{\gamma}}
+\big(\frac{\gamma}{\gamma-1}\big)^{\frac{\gamma-1}{\gamma}}p_-^{\frac{\gamma-1}{\gamma}}\Big)^{\frac{\gamma}{\gamma-1}}.\nonumber
\end{eqnarray*}
This means that, at each streamline,  $M<1$ if $p>\breve{p}$, and $M=1$ if $p=\breve{p}$.

Define
\begin{equation}\label{5.44}
\breve{p}_{0}:=\frac{\gamma}{2(\gamma-1)}\Big(\frac{\gamma+1}{2}\Big)^{\frac{\gamma}{\gamma-1}}
\Big(\max_{s\in[0, 1]}( u_{1-}^2S_-^{-\frac{1}{\gamma}})(s)+2\big(\frac{\gamma}{\gamma-1}\big)^{\frac{\gamma-1}{\gamma}}p_-^{\frac{\gamma-1}{\gamma}}\Big)^{\frac{\gamma}{\gamma-1}}.
\end{equation}
Then, if $p>\breve{p}_{0}$, the flow is subsonic in $\Omega$, \emph{i.e.} $M<1$.

Denote the minimal value of pressure $p$ on the boundary to be $p_B$.
By Lemma \ref{lem:5.4qmax}, $p\geq p_B$.
Therefore, it suffices to show $p_B\geq\breve{p}_{0}$.
By \eqref{5.41}, it is equivalent to show
$$
|\nabla\psi|\leq (\rho q) (\breve{p}_0) \qquad \mbox{on the boundary},
$$
where $(\rho q) (\breve{p}_0)$ means that $\rho q$ is determined by $\breve{p}_0$ on the boundary that is also a streamline.
By the direct calculation, we have
\begin{equation*}
\rho(\breve{p}_0)=\Big(\frac{\gamma \breve{p}_0}{(\gamma-1)S_-}\Big)^{\frac{1}{\gamma}}
\end{equation*}
and
\begin{eqnarray*}
q(\breve{p}_0)
&=&\Big(u_{1-}^2+2(\frac{\gamma}{\gamma-1})^{\frac{\gamma-1}{\gamma}}p_-^{\frac{\gamma-1}{\gamma}}S_-^{\frac{1}{\gamma}} -2(\frac{\gamma}{\gamma-1})^{\frac{\gamma-1}{\gamma}}\breve{p}_0^{\frac{\gamma-1}{\gamma}}S_-^{\frac{1}{\gamma}}\Big)^{\frac{1}{2}}.
\end{eqnarray*}
Then, as before,
we obtain
$$
(\rho q)(\breve{p}_0)=O(1)\big(1+O(1)p_-^{\frac{1}{\gamma}}\big)\big(1+O(1)p_-^{\frac{\gamma-1}{2\gamma}}\big)
=O(1)m^{\frac{\gamma+1}{2}}
$$
when $m$ is a sufficiently large constant depending only on $(u_-, S_-)$ at the inlet.

On the other hand, for any $m_0\in(0,m)$ with property \eqref{3.11} when $m_0<\psi<m$,
$$
{\rm{dist}}(\{\bx\, :\,\psi(\bx)=m_0\}, W_2)>r_0>0,
$$
where constant $r_0$ depends only on $\|(\mathbb{B},\mathbb{S})\|_{L^\infty([m_0,m])}$ and $m-m_0$.
Therefore, \eqref{3.23} yields that $|\nabla\psi|$ at the boundary is estimated as
\begin{equation*}
[\psi]_{1,\mu;\{m_0<\psi<m\}}
\leq C(r_0)\Big(1+m+\frac{\|\tilde{f}\|_{0,\{m_0<\psi<m\}}}{\lambda}\Big).
\end{equation*}
where $1+m+\frac{\|\tilde{f}\|_{0,\{m_0<\psi<m\}}}{\lambda}=O(1)m^{\frac{\gamma-1}{2}}$
when $m$ is sufficiently large, depending only on $(u_-, S_-)$ at the inlet and $(u_-', S_-')$ near the boundaries.

Then we can see that, if $m$ is sufficient large, depending only on $(u_-, S_-)$ at the inlet and $(u_-',S_-')$ near the boundaries,
then the flow is subsonic.
Therefore, we can release the assumption of $m$ from the very large number to be $m_{\rm c}$ for the ellipticity
on the walls to have \eqref{3.11}, which also implies that $q$ is uniformly bounded in the nozzle.

This completes the proof.
\end{proof}

\subsection{The Bers method}\label{Bers method}

Till now, we have obtained the existence and uniqueness of {\bf Problem 3.1($m$)} when $m$ is sufficiently large,
where we have used $m$ as a parameter to emphasize that our results depend on $m$.
Parameter $m$ belongs to an {\it a priori} unknown interval.

We now introduce a function of $\psi$ as $\mathcal{M}$ such that $\mathcal{M}(\psi)<0$ is equivalent to that the flow  is subsonic,
while $\mathcal{M}(\psi)=0$ is equivalent to that the flow is sonic.
More precisely, we define
\[
\mathcal{M}:=\frac{|\nabla_{\bx}\psi|}{\hat{Q}(\psi)}-1.
\]
Now we define the following problem:

\bigskip
{\bf Problem 5.1($m$, $\v$)}: {\it Seek a function $\psi(\bx; m, \v)$ such that
it is a solution of {\bf Problem 3.1($m$)} with
$\mathcal{M}(\psi(\bx; m, \v))<-2\v$,
where $\v$ is used for the uniform ellipticity.}
\bigskip

First, we introduce the Bers conditions ({\it cf.} \cite{Bers2}):
 \begin{definition}  The Bers conditions for {\bf Problem 5.1 ($m,\v$)} with $\mathcal{M}(\psi)$ are the following{\rm :}
 	\begin{itemize}
 		\item[(a)] $\mathcal{M}(\psi(\bx; m, \v))$ is right-continuous respect to $m$, for any fixed $\v>0${\rm ;}
 		\item[(b)] For any fixed $m$, the solution of {\bf Problem 5.1($m,\v$)} is unique.
 	\end{itemize}
 \end{definition}

The Bers method can be stated via the following proposition:

\begin{proposition}\label{prop:6.9bers}
If {\bf Problem 5.1($m,\v$)} with $\mathcal{M}(\psi)$ satisfies the Bers conditions {\rm (a)}--{\rm (b)},
then there exists a critical parameter $m_{\rm c}$ such that,
if $m\in(m_{\rm c},\infty)$, then there exists $\v_{j}$ so that $\psi(\bx; m, \v_{j})$
as the  unique solution of {\bf Problem 5.1($m$, $\v_{j}$)} is the unique solution of {\bf Problem 3.1($m$)}.
Furthermore, either $\mathcal{M}(\psi(\bx; m, \v_{j}))\rightarrow 0$ as $m\rightarrow m_{\rm c}$,
or there does not exist $\sigma>0$ such that {\bf Problem 3.1($m$)} has a solution
for all $m\in(m_{\rm c}-\sigma, m_{\rm c})$ such that $\mathcal{M}(\psi(\bx; m))<0$.
 \end{proposition}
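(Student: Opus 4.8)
The plan is to carry out the classical Bers continuity argument, using \textbf{Problem 3.3($m$)} (equivalently \textbf{Problem 5.1($m,\v$)}) as a uniformly elliptic regularization of \textbf{Problem 3.1($m$)}. Set $\mathcal{A}:=\{m\ge\hat m:\ \textbf{Problem 5.1}(m,\v)\ \text{is solvable for some}\ \v>0\}$, so that for each $m\in\mathcal{A}$ the corresponding solution of \textbf{Problem 5.1($m,\v$)} is a solution of \textbf{Problem 3.1($m$)} with $\mathcal{M}(\psi)<-2\v<0$ on $\overline{\Omega}$ (the cutoff in \eqref{cut-off using} being inactive). By Lemma \ref{lem:6.5q} we have $(\tilde{m}_{\rm c},\infty)\subset\mathcal{A}$, and by \eqref{equ:4.10a}--\eqref{equ:4.11a} one has $\mathcal{M}(\psi)\to-1$ as $m\to\infty$, so the margin is uniform for large $m$. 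Hence $m_{\rm c}:=\inf\{m_*\ge\hat m:\ (m_*,\infty)\subset\mathcal{A}\}$ is finite with $\hat m\le m_{\rm c}\le\tilde{m}_{\rm c}$, and by construction $(m_{\rm c},\infty)\subset\mathcal{A}$.

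Next I would fix $m\in(m_{\rm c},\infty)$ and produce the required $\v_j$. Since $m\in\mathcal{A}$, there is a solution $\psi$ of \textbf{Problem 3.1($m$)}; by Lemma \ref{prop:5.2} it has the far-field behaviour of Theorem \ref{thm:smooth}, so $\mathcal{M}(\psi)$ tends at $x_1=\pm\infty$ to the subsonic values associated with $u_{1\pm}<q_{\rm cr}$, whence $\bar{\mathcal{M}}(m):=\sup_{\overline{\Omega}}\mathcal{M}(\psi)<0$. Then for every $\v_j\in(0,\tfrac{1}{2}|\bar{\mathcal{M}}(m)|)$ the function $\psi$ lies in the region where the cutoff is inactive, so $\psi$ solves \textbf{Problem 5.1($m,\v_j$)}; by Bers condition (b) it is the unique such solution, i.e. $\psi=\psi(\bx;m,\v_j)$ for all these $\v_j$. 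Finally, since $\psi$ carries the asymptotics of Theorem \ref{thm:smooth}, Lemma \ref{prop:6.1} shows it is the unique solution of \textbf{Problem 3.1($m$)}; letting $\v_j\downarrow0$ gives the first assertion.

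To obtain the dichotomy, suppose the first alternative fails: $\bar{\mathcal{M}}(m)\not\to0$ as $m\downarrow m_{\rm c}$. Then there are $c_0>0$ and $m_k\downarrow m_{\rm c}$ with $\bar{\mathcal{M}}(m_k)\le-c_0$, so the solutions $\psi_{m_k}$ of \textbf{Problem 3.1($m_k$)} are uniformly subsonic with margin $c_0$; on this set \eqref{equ:3.18} is uniformly elliptic, the coefficients depend on $m_k$ only through $(\mathbb{B},\mathbb{S})$, which converge to the data at $m_{\rm c}$ by \eqref{3.5}, \eqref{equ:2.14a} and \eqref{psi-ini}, and the uniform H\"older and Schauder estimates of \S\ref{sec:existence smooth} give uniform $C^{2,\alpha}_{\rm loc}$ bounds. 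A diagonal subsequence then converges in $C^{2,\alpha'}_{\rm loc}$ to a solution $\psi_{m_{\rm c}}$ of \textbf{Problem 3.1($m_{\rm c}$)}, with $0<\psi_{m_{\rm c}}<m_{\rm c}$ and $\partial_{x_2}\psi_{m_{\rm c}}>0$ (from \eqref{equ:5.18} and the strong maximum principle, as in Step 5 of the proof of Theorem \ref{prop:5.1}) and $\bar{\mathcal{M}}(\psi_{m_{\rm c}})\le-c_0<0$, so $m_{\rm c}\in\mathcal{A}$. If moreover the second alternative also failed, there would be $\sigma>0$ with $(m_{\rm c}-\sigma,m_{\rm c})\subset\mathcal{A}$, and then $(m_{\rm c}-\sigma,\infty)=(m_{\rm c}-\sigma,m_{\rm c})\cup\{m_{\rm c}\}\cup(m_{\rm c},\infty)\subset\mathcal{A}$, contradicting the definition of $m_{\rm c}$ as an infimum. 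Hence at least one of the two alternatives holds.

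Here Bers condition (b) is used to make the identification $\psi=\psi(\bx;m,\v_j)$ unambiguous, while Bers condition (a) (right-continuity of $\mathcal{M}(\psi(\bx;m,\v))$ in $m$) is what makes $\bar{\mathcal{M}}(m)$ behave well as $m\downarrow m_{\rm c}$ and identifies the limit $\psi_{m_{\rm c}}$ with the solution selected by right-limits. The step I expect to be the main obstacle is the compactness argument at $m=m_{\rm c}$: one must upgrade the interior convergence of $\psi_{m_k}$ to control that is uniform \emph{up to the infinite ends of the nozzle}, so that both $\bar{\mathcal{M}}(\psi_{m_{\rm c}})\le-c_0$ and the non-degeneracy $\partial_{x_2}\psi_{m_{\rm c}}>0$ survive in the limit; this requires combining the $\bx$-coordinate estimates with the Lagrangian far-field analysis of \S\ref{subset:5.1far field} and the continuity of the asymptotic states in $m$, exactly as in the proofs of Lemmas \ref{prop:5.2}--\ref{prop:6.1}.
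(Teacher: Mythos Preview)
Your approach is correct in spirit but takes a substantially different and heavier route than the paper's. The paper's proof is a short, purely \emph{abstract} argument that uses only the two Bers conditions as black boxes; it never touches Schauder estimates, Lagrangian coordinates, or compactness at $m_{\rm c}$. Concretely: the paper fixes a strictly decreasing sequence $\v_j\downarrow 0$, and for each $j$ uses Bers condition (a) to produce the maximal interval $(\bar m_j,\infty)$ on which the (unique, by (b)) solution satisfies $\mathcal{M}<-2\v_j$. Condition (b) forces these intervals to be nested ($\bar m_j$ decreasing in $j$), so $m_{\rm c}:=\lim_j \bar m_j$ exists. The dichotomy is then a one-line case split: if $\bar m_j>m_{\rm c}$ for all $j$, then each $m\in(m_{\rm c},\infty)$ falls in some $(\bar m_j,\bar m_{j-1}]$ and $-2\v_{j-1}\le\mathcal{M}<-2\v_j$, forcing $\mathcal{M}\to 0$ as $m\downarrow m_{\rm c}$; if instead $\bar m_j\equiv m_{\rm c}$ for large $j$, then no interval $(m_{\rm c}-\sigma,m_{\rm c})$ can be fully solvable, which is the second alternative.

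Your version rebuilds this via a compactness limit $\psi_{m_k}\to\psi_{m_{\rm c}}$ and invokes \S\ref{sec:existence smooth}--\S\ref{sec:uniqueness} machinery to make the limit a genuine solution. That works, but (i) it imports problem-specific structure into what is stated as an abstract proposition about any family satisfying (a)--(b); (ii) the ``main obstacle'' you flag---uniform-to-the-ends control so that $\sup\mathcal{M}(\psi_{m_{\rm c}})\le -c_0$ and $\partial_{x_2}\psi_{m_{\rm c}}>0$ survive---is real work that the paper's argument bypasses completely; and (iii) your reduction ``second alternative fails $\Rightarrow(m_{\rm c}-\sigma,m_{\rm c})\subset\mathcal{A}$'' tacitly assumes any solution of \textbf{Problem 3.1($m$)} with pointwise $\mathcal{M}<0$ has $\sup\mathcal{M}<0$, which needs the far-field analysis (Lemma~\ref{prop:5.2}) extended to arbitrary solutions rather than just those produced by Theorem~\ref{prop:5.1}. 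None of this is wrong, but the paper's nested-interval argument via $\{\bar m_j\}$ gets the same conclusion with no analysis at all beyond conditions (a) and (b).
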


 \begin{proof}
Let $\{\v_j\}_{j=1}^{\infty}$ be a strictly decreasing sequence of positive constants
such that $\v_j\rightarrow0$ as $j\rightarrow\infty$.
From the Bers condition (a), for fixed $j$, there exists a maximal interval $(\bar{m}_j, \infty)$ such that,
for $m\in(\bar{m}_j,\infty)$,
$$
\mathcal{M}(\psi(\bx; m, \v_j))<-2\v_j.
$$
Then, for $m\in(\bar{m}_j, \infty)$,
$\psi(\bx; m, \v_j)$ is the solution of {\bf Problem 5.1($m$, $\v_j$)}.
By the Bers condition (b),
we see that $\bar{m}_j\ge\bar{m}_k\ge 0$ for $j>k$.
Thus, $\{\bar{m}_j\}_{n=1}^{\infty}$ is a decreasing nonnegative sequence,
which implies the convergence of the sequence.
As a consequence, we have
$$
m_{\rm c}:=\lim_{j\rightarrow\infty}\bar{m}_j.
$$

If $\bar{m}_j>m_{\rm c}$ for any $j$, then, for any $m\in (m_{\rm c},\infty)$, there exists an index $j$
such that $\bar{m}_{j}<m\le\bar{m}_{j-1}$.
The solution of {\bf Problem 5.1($m$, $\v_j$)} is $\psi(\bx; m, \v_j)$ with the estimate:
$$
-2\v_{j-1}\le\mathcal{M}(\psi(\bx; m,\v_j))<-2\v_j.
$$
Then we conclude that
$\mathcal{M}(\psi(\bx; m, \v_{j}))\rightarrow 0$ as $m\rightarrow m_{\rm c}$.
 	
On the other hand, if $\bar{m}_j\equiv m_{\rm c}$ when $j$ is large enough,
then, by construction, there does not exist $\sigma>0$ such that
{\bf Problem 3.1($m$)} has a solution for all $m\in(m_{\rm c}-\delta,m_{\rm c})$
so that $\mathcal{M}(\psi(\bx; m))<0$.
This implies that some new phenomena like shock bubbles
may appear before the subsonic flows become subsonic-sonic flows.
\end{proof} 	

\begin{remark}
By Theorem {\rm \ref{prop:5.1}} and Lemma {\rm \ref{prop:6.1}},
the solution of {\bf Problem 5.1($m$, $\v$)} satisfies the Bers conditions.
Then, by Proposition {\rm \ref{prop:6.9bers}}, when $m>m_{\rm c}$,
{\bf Problem 3.1($m$)} has a unique subsonic solution.
 \end{remark}

 \begin{proof}[Proof of Theorem {\rm \ref{thm:smooth}}]
 The existence, uniqueness, and the properties listed in Theorem \ref{thm:smooth}
 have been proved in \S \ref{sec:existence smooth} and this section.
 Therefore, we complete the proof of Theorem \ref{thm:smooth}.
 \end{proof}

\section{Discontinuous Solutions}\label{sec:mathsettingweak}

Now we study the existence and uniqueness of weak solutions
of {\bf Problem 3.2($m$)} if $(u_{1-}, S_-)$ have discontinuities.
First, the weak solutions of  {\bf Problem 3.2($m$)}
are actually solutions $\psi$ of equations \eqref{equ:8.1discontinuous} with the boundary conditions \eqref{bou:2.9}.
We remark that, by the limiting process, the solutions of \eqref{equ:8.1discontinuous}
are the weak solutions of the full Euler equations \eqref{1.5}.

We now consider the existence of solutions with discontinuities.

\begin{lemma}\label{prop:9.1existence}
Let $(u_{1-}, S_-)$ satisfy \eqref{assumption:3.40}--\eqref{assumption:3.47a}, and let there be $\varepsilon_0>0$ such that
\begin{equation}\label{assumption2.2.xw}
(u_{1-},S_-)\in \big(BV([0,1])\cap C^{1,1}([0,2\varepsilon_0)\cup(1-2\varepsilon_0,1])\big)^2.
\end{equation}
Then there exists $\underline{m}$ such that, for any $m>\underline{m}$,
there is a weak solution $\bar{\psi}$ of  equation \eqref{equ:8.1discontinuous} so that
$\mathcal{M}(\bar{\psi})<1$, $\partial_{x_2}\bar{\psi}\geq0$, $|\bar{\theta}|\leq\theta_B$,
and $\min_{\partial \Omega}\bar{p}\le \bar{p}\le \max_{\partial \Omega}\bar{p}$\, a.e. in $\Omega$.
\end{lemma}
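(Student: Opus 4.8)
The plan is to obtain $\bar{\psi}$ as a limit of smooth solutions supplied by Theorem~\ref{thm:smooth}, with the passage to the limit effected by the compensated compactness framework of \cite{ChenHuangWang} (see also \cite{Chen}). First I would regularize the data: using the $C^{1,1}$--regularity of $(u_{1-},S_-)$ on $[0,2\varepsilon_0)\cup(1-2\varepsilon_0,1]$, one constructs a family $(u_{1-}^{\varepsilon},S_-^{\varepsilon})\in(C^{1,1}[0,1])^2$ that coincides with $(u_{1-},S_-)$ on $[0,\varepsilon_0]\cup[1-\varepsilon_0,1]$ and is a mollification in the interior. Since $(u_{1-},S_-)\in BV([0,1])$ with essential infima bounded away from $0$ (assumption \eqref{assumption:3.40}), the family has uniformly bounded $L^{\infty}$--norms and total variations and uniformly positive infima; hence each $(u_{1-}^{\varepsilon},S_-^{\varepsilon})$ satisfies \eqref{assumption:3.40}, and it inherits \eqref{assumption:3.47a} because that condition only involves the data at the endpoints $0$ and $1$, where it is unchanged. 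Moreover $(u_{1-}^{\varepsilon},S_-^{\varepsilon})\to(u_{1-},S_-)$ a.e.\ and in $L^1$, so the associated $(\mathbb{B}^{\varepsilon},\mathbb{S}^{\varepsilon})$ converge to $(\mathbb{B},\mathbb{S})$ off the at most countable jump set of the latter.

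\noindent\textbf{Solving the smooth problems with uniform bounds.} By Lemma~\ref{lem:6.5q} and Proposition~\ref{prop:6.9bers} (that is, Theorem~\ref{thm:smooth}), for each $\varepsilon$ there is a critical flux $\tilde m_{\rm c}^{\varepsilon}$ depending only on $(w_1,w_2)$, on $(u_{1-}^{\varepsilon},S_-^{\varepsilon})$, and on $((u_{1-}^{\varepsilon})',(S_-^{\varepsilon})')$ near the walls; by the previous paragraph all these quantities are controlled uniformly in $\varepsilon$, so $\underline m:=\sup_{0<\varepsilon<\varepsilon_1}\tilde m_{\rm c}^{\varepsilon}<\infty$. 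For $m>\underline m$, Theorem~\ref{thm:smooth} then yields a unique solution $(\rho^{\varepsilon},\bu^{\varepsilon},p^{\varepsilon})\in(C^{1,\alpha}(\Omega))^4$ of {\bf Problem 2.1($m$)} with data $(u_{1-}^{\varepsilon},S_-^{\varepsilon})$, with stream function $\psi^{\varepsilon}$. From the properties listed in Theorem~\ref{thm:smooth}, Lemma~\ref{lem:5.4qmax} and Lemma~\ref{lem:6.5q}, I would extract the following bounds \emph{uniform in} $\varepsilon$: $\min_{\partial\Omega}p^{\varepsilon}\le p^{\varepsilon}\le\max_{\partial\Omega}p^{\varepsilon}$; on the walls $W_1=\{\psi^\varepsilon=0\}$, $W_2=\{\psi^\varepsilon=m\}$ (streamlines carrying uniformly bounded $(\mathbb{B}^{\varepsilon},\mathbb{S}^{\varepsilon})$ and uniformly bounded $|\nabla\psi^{\varepsilon}|$) the pressure lies between two fixed positive constants, whence $0<c_1\le p^{\varepsilon}\le c_2$ in $\Omega$, and consequently $0<c_3\le\rho^{\varepsilon}\le c_4$ from $(\ref{equ:8.1discontinuous})_3$ and $|\bu^{\varepsilon}|\le c_5$ from Bernoulli's law; $|\theta^{\varepsilon}|\le\theta_B$ with $\theta_B:=\max_{W_1\cup W_2}\theta$, the wall--inclination bound, depending only on $(w_1,w_2)$ by \eqref{Con:Boundary2} and hence independent of $\varepsilon$; $\partial_{x_2}\psi^{\varepsilon}>0$; and, after enlarging $\underline m$ by a fixed margin, uniform subsonicity $\mathcal{M}(\psi^{\varepsilon})\le-c_0<0$. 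I would stress that $\psi^{\varepsilon}$ is only uniformly Lipschitz: the Schauder bound \eqref{3.23}--\eqref{3.24} carries $\|\tilde f^{\varepsilon}\|\sim\|((\mathbb{B}^{\varepsilon})',(\mathbb{S}^{\varepsilon})')\|$, which is \emph{not} uniformly bounded near the jumps, so no uniform $C^{1,\alpha}$ bound on $\psi^{\varepsilon}$ is available --- which is precisely why compensated compactness is forced upon us.

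\noindent\textbf{Passing to the limit.} By Arzel\`a--Ascoli, along a subsequence $\psi^{\varepsilon}\to\bar{\psi}$ in $C^0_{\rm loc}(\overline{\Omega})$, and $(\rho^{\varepsilon},\bu^{\varepsilon},p^{\varepsilon})$ converges weakly-$*$ in $L^{\infty}$ to some $(\bar{\rho},\bar{\bu},\bar{p})$. The crucial step is to upgrade this to a.e.\ convergence by the compensated compactness method of \cite{ChenHuangWang}: using the mass equation $(\ref{1.5})_1$ together with the transport/vorticity structure of the system, the uniform $L^{\infty}$ bounds and the uniform subsonicity place the sequence in the admissible regime for a suitable family of entropy--entropy flux pairs; the div--curl lemma then yields a Tartar-type commutation identity forcing the Young measure associated with $(\rho^{\varepsilon},\bu^{\varepsilon},p^{\varepsilon})$ to be a Dirac mass, whence $(\rho^{\varepsilon},\bu^{\varepsilon},p^{\varepsilon})\to(\bar{\rho},\bar{\bu},\bar{p})$ a.e.\ in $\Omega$. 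In particular $\nabla\psi^{\varepsilon}=\rho^{\varepsilon}(-u_2^{\varepsilon},u_1^{\varepsilon})\to\bar{\rho}(-\bar{u}_2,\bar{u}_1)=\nabla\bar{\psi}$ a.e., and $(\mathbb{B}^{\varepsilon},\mathbb{S}^{\varepsilon})(\psi^{\varepsilon})\to(\mathbb{B},\mathbb{S})(\bar{\psi})$ a.e.\ (using that $\psi^{\varepsilon}\to\bar{\psi}$ locally uniformly and that $\bar{\psi}$ meets each jump value of $(\mathbb{B},\mathbb{S})$ on a null set, a point that is part of the compactness analysis). Sending $\varepsilon\to0$ in the weak formulation \eqref{weaksolution} for $(\rho^{\varepsilon},\bu^{\varepsilon},p^{\varepsilon})$ by dominated convergence, one concludes that $(\bar{\rho},\bar{\bu},\bar{p})$, equivalently $\bar{\psi}$, is a weak solution of \eqref{equ:8.1discontinuous} in the sense of Definition~\ref{def:2.1weak}, with the slip condition \eqref{cdx-sc} passing to the limit as a normal trace in the sense of Chen--Frid \cite{Chen7} (encoded through $\bar{\psi}=0$ on $W_1$ and $\bar{\psi}=m$ on $W_2$, which persists under the uniform convergence up to the boundary). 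Each pointwise bound above then survives the a.e.\ limit: $\partial_{x_2}\bar{\psi}\ge0$; $|\bar{\theta}|\le\theta_B$; $\mathcal{M}(\bar{\psi})\le-c_0<1$ a.e.; and $\min_{\partial\Omega}\bar{p}\le\bar{p}\le\max_{\partial\Omega}\bar{p}$, using also the uniform convergence of the boundary traces. This gives all the asserted properties.

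\noindent\textbf{Main obstacle.} The decisive difficulty is the reduction of the Young measure above, i.e.\ showing that compensated compactness genuinely promotes weak-$*$ to a.e.\ convergence: this requires the right family of entropy pairs for the steady two-dimensional full Euler system and a careful verification of the div--curl hypotheses in the merely $L^{\infty}$ (plus uniform subsonicity) setting, and is exactly what the framework of \cite{ChenHuangWang} (see also \cite{Chen}) supplies. A secondary, structural obstruction --- and the reason a direct elliptic-compactness argument fails --- is that $(\mathbb{B}^{\varepsilon})''$ and $(\mathbb{S}^{\varepsilon})''$ are unsigned while $(\mathbb{B}^{\varepsilon})',(\mathbb{S}^{\varepsilon})'$ are not uniformly bounded, so one has no uniform higher regularity for $\psi^{\varepsilon}$ and must work with $L^{\infty}$ weak limits throughout.
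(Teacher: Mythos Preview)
Your proposal is correct and follows essentially the same approach as the paper: regularize the inlet data so as to leave it unchanged near the walls (preserving \eqref{assumption:3.40}--\eqref{assumption:3.47a}), apply Theorem~\ref{thm:smooth} with the uniform critical mass from Lemma~\ref{lem:6.5q}/Proposition~\ref{prop:6.9bers}, and pass to the limit via the compensated compactness framework of \cite{ChenHuangWang}. The only place where the paper is more explicit than you is in verifying the hypotheses of that framework --- it computes $\nabla B^{\varepsilon}=(\mathbb{B}^{\varepsilon})'(\psi^{\varepsilon})\rho^{\varepsilon}(-u_2^{\varepsilon},u_1^{\varepsilon})$ to show $B^{\varepsilon},S^{\varepsilon}$ are uniformly $BV$ (hence strongly convergent) and notes that $\omega^{\varepsilon}=-\rho^{\varepsilon}(\mathbb{B}^{\varepsilon})'+\tfrac{(\rho^{\varepsilon})^{\gamma}}{\gamma}(\mathbb{S}^{\varepsilon})'$ is uniformly bounded as a measure and thus compact in $H^{-1}_{\rm loc}$ --- whereas you fold this into your abstract description of the div--curl step.
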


\begin{proof}  We divide the proof into two steps.

\smallskip
1. First, we introduce the process for the discontinuous stream-conserved quantities.

Let $j_\varepsilon\in C^\infty_0(\mathbb{R})$ be the standard modifier satisfying $\mbox{supp}(j_\varepsilon) =(-\varepsilon, \varepsilon)$ with
$\|j_\varepsilon\|_{L^1}=1$.
For any $\varepsilon<\varepsilon_0$, we introduce
\begin{equation*}
\mathcal{M}_\varepsilon (g)= (1-I_{[\varepsilon_0, 1-\varepsilon_0]}*j_\varepsilon)g+((I_{[\varepsilon_0, 1-\varepsilon_0]}*j_\varepsilon)g)*j_\varepsilon.
\end{equation*}
Define
\begin{eqnarray*}
\begin{cases}
S_-^\varepsilon:=\mathcal{M}_\varepsilon (S_-),\\
u_{1-}^\varepsilon:=\mathcal{M}^{\frac{1}{2}}_\varepsilon (u_{1-}^2S_-^{-\frac{1}{\gamma}})\mathcal{M}^{\frac{1}{2\gamma}}_\varepsilon (S_-)
\end{cases}
\end{eqnarray*}
such that $(u_{1-}^\varepsilon, S_-^\varepsilon)$  converge to $(u_{1-}, S_-)$ pointwise in $[0, 1]$.
It is clear that $(u_{1-}^\varepsilon, S_-^\varepsilon)$ satisfy the assumptions of Theorem \ref{thm:smooth}.
Then, for any $\varepsilon\in(0,\varepsilon_0)$,
there is $\underline{m}^{(\varepsilon)}$ such that, when $m>\underline{m}^{(\varepsilon)}$,
there exists a unique $\psi^\varepsilon$ satisfying
\begin{eqnarray*}
\begin{cases}
\nabla\psi^\varepsilon=\rho^\varepsilon(- u_2^\varepsilon, u_1^\varepsilon),\\[2mm]
\mathbb{B^\varepsilon}(\psi^\varepsilon)=\frac{1}{2}|\bu^\varepsilon|^2 +\frac{\gamma p^\varepsilon}{(\gamma-1)\rho^\varepsilon},\\[2mm]
\mathbb{S^\varepsilon}(\psi^\varepsilon)= \frac{\gamma p^\varepsilon}{(\gamma-1)(\rho^\varepsilon)^\gamma},\\[2mm]
\partial_{x_1}u_2^\varepsilon-\partial_{x_2}u_1^\varepsilon=-\rho^\varepsilon (\mathbb{B}^\varepsilon)'(\psi^\varepsilon)+\frac{(\rho^\varepsilon)^{\gamma}}{\gamma}(\mathbb{S}^\varepsilon)'(\psi^\varepsilon),
\end{cases}
\end{eqnarray*}
with
\begin{equation*}
\psi^\varepsilon|_{x_2=w_1(x_1)}=0,\qquad \psi^\varepsilon|_{x_2=w_2(x_1)}=m,
\end{equation*}
and $\mathcal{M}(\psi^\varepsilon)<1$, $\partial_{x_2}\psi^\varepsilon>0$, $|\theta^\varepsilon|\leq \theta_B$, and
$\min_{\partial \Omega} p^{\varepsilon}\le p^{\varepsilon}\le \max_{\partial \Omega} p^{\varepsilon}$  in $\Omega$.

Moreover, by Proposition \ref{prop:6.9bers}, $\underline{m}^{(\varepsilon)}$ is independent of the regularization parameter  $\varepsilon$.
Then $(\rho^{\varepsilon},\bu^{\varepsilon},p^{\varepsilon})$ obtained from $\psi^{\varepsilon}$
satisfies the full Euler equations.

\smallskip
2. We then show the convergence of $(\rho^\varepsilon, \bu^\varepsilon, p^\varepsilon)(\bx)$ {\it a.e.} from $\psi^\varepsilon$,
by employing the compensated compactness framework in Chen-Huang-Wang \cite{ChenHuangWang}.
Here $(B^\varepsilon, S^\varepsilon, \omega^\varepsilon)$ are defined by
\begin{eqnarray*}
\begin{cases}
B^\varepsilon(\bx):=\mathbb{B^\varepsilon}(\psi^\varepsilon(\bx))=\frac{1}{2}|\bu^\varepsilon(\bx)|^2+\frac{\gamma p^\varepsilon}{(\gamma-1)\rho^\varepsilon}(\bx),\\[2mm]
S^\varepsilon(\bx):=\mathbb{S^\varepsilon}(\psi^\varepsilon)(\bx)= \frac{\gamma p^\varepsilon}{(\gamma-1)(\rho^\varepsilon)^\gamma}(\bx),\\[2mm]
\omega^\varepsilon(\bx):=-\rho^\varepsilon (\mathbb{B}^\varepsilon)'(\psi^\varepsilon)(\bx)+\frac{(\rho^\varepsilon)^{\gamma}}{\gamma}(\mathbb{S}^\varepsilon)'(\psi^\varepsilon)(\bx).
\end{cases}
\end{eqnarray*}
From a direct calculation, we have
\begin{eqnarray*}
\nabla B^\varepsilon(\bx)=(\mathbb{B}^\varepsilon)'(\psi^\varepsilon)(\bx)\rho^\varepsilon(\bx) (-u_2^\varepsilon(\bx),u_1^\varepsilon(\bx)).
\end{eqnarray*}
Then $B^\varepsilon$ is uniformly bounded in $BV$, which implies its strong convergence.
The similar argument can lead to the strong convergence of $S^\varepsilon$.
The vorticity sequence $\omega^\varepsilon$ as a measure sequence is uniformly bounded,
which is compact in $H^{-1}_{loc}$.

Combining these with the subsonic condition: $M(\psi^\varepsilon)<1-\delta$,
Theorem 2.1 in Chen-Huang-Wang \cite{ChenHuangWang} implies that
the solution sequence has a subsequence (still denoted
by) $(\rho^\varepsilon, \bu^{\varepsilon}, p^\varepsilon)(\bx)$
that converges {\it a.e.} in $\Omega$ to
a vector function $(\bar{\rho}, \bar{\bu}, \bar{p})(\bx)$.

For the vector function $(\bar{\rho}, \bar{\bu}, \bar{p})(\bx)$ obtained as the limit,
we introduce $\bar{\psi}$ with
\begin{eqnarray*}
\nabla\bar{\psi}=\bar{\rho}(-\bar{u}_2,\bar{u}_1)
\end{eqnarray*}
such that $\mathcal{M}(\bar{\psi})<1$, $\partial_{x_2}\bar{\psi}\geq0$, $|\bar{\theta}|\leq\theta_B$,
and $\min_{\partial \Omega}\bar{p}\le \bar{p}\le \max_{\partial \Omega}\bar{p} \,$  {\it a.e.} in $\Omega$,
and $\bar{\psi}|_{x_2=w_1(x_1)}=0$ and $\bar{\psi}|_{x_2=w_2(x_1)}=m$.
Also, it can be checked that $\psi^\varepsilon$ converges to $\bar{\psi}$ in the Lipschitz space $Lip(\Omega)$.
\end{proof}

Now we show the $C^{2, \alpha}$--convergence of $\psi^\varepsilon$ away from the discontinuity under assumption \eqref{assumption:3.44dis}.

\begin{lemma}\label{lem:6.2x}
If, in addition, assumption \eqref{assumption:3.44dis} holds,
then $\psi$ is $C^{2,\alpha}$ and $\partial_{x_2}\bar{\psi}>0$ when $\psi\neq m_d$.
Moreover, the solution satisfies \eqref{equ:2.59a}--\eqref{equ:2.60a} uniformly
for $x_2\in K\Subset(0,1)$ as $x_1\rightarrow-\infty$ and for $x_2\in K\Subset(a,b)$ as $x_1\rightarrow\infty$ when $\psi\neq m_d$.
\end{lemma}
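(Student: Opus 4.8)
The plan is to transfer the interior regularity and non-degeneracy from the approximating sequence $\psi^\varepsilon$ built in Lemma \ref{prop:9.1existence} to the limit $\bar\psi$ on the open set $\{\bar\psi\neq m_d\}$, and then to run the far-field analysis of Lemma \ref{prop:5.2} on each of the two substrips into which the streamline $\Gamma:=\{\bar\psi=m_d\}$ divides the nozzle.

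\textbf{Step 1 (uniform interior estimates away from $\Gamma$).} The key observation is that, as functions of $\psi$, the pairs $(\mathbb{B}^\varepsilon,\mathbb{S}^\varepsilon)$ are modified by $\mathcal{M}_\varepsilon$ only in an $O(\varepsilon)$-neighborhood of $\psi=m_d$: on any interval $\{|\psi-m_d|\ge c\}\cap[0,m]$ they are bounded in $C^{1,1}$ uniformly in $\varepsilon$ and converge to $(\mathbb{B},\mathbb{S})$ there, using assumption \eqref{assumption:3.44dis} and that $\psi_-^{-1}$ is $C^{2,1}$ away from $x_d$ (since $\psi_-'=\rho_-u_{1-}>0$). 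Now fix a compact $K\subset\Omega\setminus\Gamma$. Since $\psi^\varepsilon\to\bar\psi$ uniformly (indeed in $\mathrm{Lip}(\Omega)$ by Lemma \ref{prop:9.1existence}), there are $c>0$ and a compact $K'$ with $K\Subset K'\subset\Omega\setminus\Gamma$ such that $|\psi^\varepsilon-m_d|\ge c$ on $K'$ for all small $\varepsilon$. On $K'$ the coefficients $a_{ij}^\varepsilon(\nabla\psi^\varepsilon,\psi^\varepsilon)$ are uniformly elliptic (uniform subsonicity $M(\psi^\varepsilon)<1-\delta$ from the Bers construction, Lemma \ref{lem:6.5q}) and $f^\varepsilon$ is uniformly bounded in $L^\infty(K')$; the local De Giorgi--Nash estimate then gives a uniform $C^{1,\mu}(K)$ bound, whence the coefficients lie uniformly in $C^{0,\mu}(K)$ and $f^\varepsilon$ uniformly in $C^{0,\mu}(K)$ (a Lipschitz function of $\psi^\varepsilon$ composed with a H\"older map), and the local Schauder estimate produces a uniform $\|\psi^\varepsilon\|_{C^{2,\alpha}(K)}$ bound. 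By Arzel\`a--Ascoli, $\psi^\varepsilon\to\bar\psi$ in $C^{2,\alpha'}(K)$ for $\alpha'<\alpha$, so $\bar\psi$ is $C^{2,\alpha}$ on compact subsets of $\{\bar\psi\neq m_d\}$ and solves \eqref{equ:3.18} classically there.

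\textbf{Step 2 (strict monotonicity away from $\Gamma$).} From Lemma \ref{prop:9.1existence}, $\cu:=\partial_{x_2}\bar\psi\ge0$. Since $\bar\psi$ is non-decreasing in $x_2$, the open set $\{\bar\psi\neq m_d\}$ splits into the connected pieces $\{\bar\psi<m_d\}$, adjacent to $W_1$, and $\{\bar\psi>m_d\}$, adjacent to $W_2$. On each piece $\bar\psi$ is a $C^{2,\alpha}$ subsonic solution and $(\mathbb{B},\mathbb{S})$ are $C^{1,1}$, so $\cu$ satisfies the second-order elliptic equation \eqref{equ:5.18} (into whose coefficient $\bar d$ the terms $\mathbb{B}'',\mathbb{S}''$ enter). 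As in Lemma \ref{lem:4.12}, Hopf's lemma at the adjacent wall gives $\cu>0$ near that wall, so $\cu\not\equiv0$ on the piece, and the strong maximum principle forces $\cu>0$ throughout. Hence $\partial_{x_2}\bar\psi>0$ wherever $\bar\psi\neq m_d$.

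\textbf{Step 3 (far-field behavior) and main obstacle.} Fix $x_1\to-\infty$, the case $x_1\to\infty$ being identical. In the Lagrangian coordinates $(z_1,z_2)=(x_1,\bar\psi)$ the discontinuity is the line $z_2=m_d$, and $\bar\varphi=x_2$ solves \eqref{Leqninfity} on each substrip $\mathbb{R}\times[0,m_d]$ and $\mathbb{R}\times[m_d,m]$, on each of which $(\mathbb{B},\mathbb{S})$ are $C^{1,1}$ and $\partial_{z_2}\bar\varphi=1/\partial_{x_2}\bar\psi$ is locally bounded by Step 2 (together with the blow-up argument of \S\ref{subsubsec:non-degeneracy} adapted to each substrip, to keep this uniform at infinity). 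Then the argument of Lemma \ref{prop:5.2} applies substrip by substrip: the Cacciopoli and Poincar\'e inequalities for $\bar{\mathcal{V}}=\partial_{z_1}\bar\varphi$ (vanishing at the wall side $z_2=0$, resp.\ $z_2=m$) give $\nabla_\bz\bar{\mathcal{V}}\in L^2$ and hence $\bar{\mathcal{V}}\equiv0$, i.e.\ $\bar u_2\equiv0$ in the limit, and the uniqueness step together with the outlet analysis of \S\ref{subset:asymptotic state plus} identifies the $z_1$-independent limit with the states $\rho_\pm,u_{1\pm},p_\pm$ of \S\ref{subset:asymptotic state}--\S\ref{subset:asymptotic state plus}, giving \eqref{equ:2.59a}; differentiating and invoking the interior elliptic estimates on the substrips yields \eqref{equ:2.60a}. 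The crux is Step 1: one must ensure that on a fixed compact set disjoint from $\Gamma$ the approximate solutions $\psi^\varepsilon$ themselves stay uniformly away from the critical value $m_d$, so that neither ellipticity nor the $C^{0,\alpha}$ control of $f^\varepsilon$ degenerates as $\varepsilon\to0$; this is exactly where the $\mathrm{Lip}$-convergence $\psi^\varepsilon\to\bar\psi$ and the localized structure of $\mathcal{M}_\varepsilon$ are indispensable, and the monotonicity $\partial_{x_2}\bar\psi\ge0$ is what makes the component decomposition in Step 2 (and hence the strong maximum principle) available.
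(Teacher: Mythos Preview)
Your proof is correct and follows essentially the same route as the paper: localize away from the critical level $\psi=m_d$ using the uniform (Lipschitz) convergence $\psi^\varepsilon\to\bar\psi$ together with the $C^{1,1}$ convergence of $(\mathbb{B}^\varepsilon,\mathbb{S}^\varepsilon)$ on $\{|\psi-m_d|\ge c\}$, pass to the limit in \eqref{equ:3.18} on compact subsets of $\{\bar\psi\neq m_d\}$, and then rerun the smooth-case arguments of \S\ref{sec:existence smooth}--\S\ref{sec:uniqueness} (Hopf lemma + strong maximum principle for $\partial_{x_2}\bar\psi$, and the blow-up/Lagrangian far-field analysis) piecewise on the two substrips. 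The paper phrases Step~1 via the monotone family of sublevel sets $\Omega^\varepsilon(\dot m)\to\Omega(\dot m)$ rather than via a fixed compact $K'$, but this is only a cosmetic difference.
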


\begin{proof}
For the existence, we only need to repeat the proof of Lemma \ref{prop:9.1existence}
by changing only the modifiers as follows:
Without loss of generality, assume that there is only one jump point at $x_2=x_d$ at the inlet $x_1=-\infty$,
which implies that
there exist $x_{d-}$ and $x_{d+}$ such that $0< x_{d-}<x_d<x_{d+} < 1$.
Then $(u_{1-}^\varepsilon, S_-^\varepsilon)$ determined in the proof of Lemma \ref{prop:9.1existence}
converge to $(u_{1-}, S_-)$ pointwise in $[0, 1]$, and $C^{1,1}$ uniformly in any subset away from $x_2=x_d$.
Thus, we obtain the existence of weak solutions as stated in Lemma \ref{prop:9.1existence}.

\smallskip
Next,
for fixed $\psi^\varepsilon$, we define
\begin{equation*}
\Omega^\varepsilon(\dot{m}):=
\begin{cases}
\{\bx\; :\; \bx\in\Omega, \psi^\varepsilon(\bx)<\dot{m} \}  \qquad \mbox{for $0<\dot{m}<m_d$},\\[1mm]
\{\bx\; :\; \bx\in\Omega, \psi^\varepsilon(\bx)>\dot{m} \} \qquad \mbox{for $m_d<\dot{m}<m$}.
\end{cases}
\end{equation*}
Then $\Omega^\varepsilon(\dot{m})$ satisfies the following properties:
\begin{enumerate}
\item[(i)] For $0<\dot{m}_1<\dot{m}_2<m_d$, $\Omega^\varepsilon(\dot{m}_1)\subsetneq\Omega^\varepsilon(\dot{m}_2)$;

\smallskip
\item[(ii)] For $m_d<\dot{m}_1<\dot{m}_2<m$, $\Omega^\varepsilon(\dot{m}_1)\supsetneq\Omega^\varepsilon(\dot{m}_2)$;

\smallskip
\item[(iii)]
$\overline{\Omega}=\overline{\big(\cup_{0<\dot{m}<m_d} \Omega^\varepsilon(\dot{m})\big)\cup \big(\cup_{m_d<\dot{m}<m} \Omega^\varepsilon(\dot{m})\big)}$ for each $\varepsilon$.
\end{enumerate}

Similarly, we define the corresponding sets with respect to $\bar{\psi}$:
\begin{equation*}
\Omega(\dot{m}):=
\begin{cases}
\{\bx\; :\; \bx\in\Omega, \bar{\psi}(\bx)<\dot{m} \}  \qquad \mbox{for $0<\dot{m}<m_d$},\\[1mm]
\{\bx\; :\; \bx\in\Omega, \bar{\psi}(\bx)>\dot{m} \} \qquad \mbox{for $m_d<\dot{m}<m$}.
\end{cases}
\end{equation*}
and the level set:
$$
\Omega(m_d):=\{\bx\,:\, \bx\in\Omega, \bar{\psi}(\bx)=m_d \}
$$
with the following properties:
\begin{enumerate}
\item[(i)] For $0<\dot{m}_1<\dot{m}_2<m_d$, $\Omega(\dot{m}_1)\subset\Omega(\dot{m}_2)$;

\smallskip
\item[(ii)] For $m_d<\dot{m}_1<\dot{m}_2<m$, $\Omega(\dot{m}_1)\supset\Omega(\dot{m}_2)$;

\smallskip
\item[(iii)] $\overline{\Omega}
=\overline{\big(\cup_{0<\dot{m}<m_d} \Omega(\dot{m})\big)\cup \big(\cup_{m_d<\dot{m}<m} \Omega(\dot{m}) \cup \Omega(m_d)\big)}$.
\end{enumerate}

Since the Lipschitz convergence of $\psi^\varepsilon$ to $\bar{\psi}$ as $\varepsilon\to 0$,
we see that, for each $\dot{m}\neq m_d$, $\Omega^\varepsilon(\dot{m})$ converges to $\Omega(\dot{m})$.
Thus, for $0<\dot{m}<m_d$,
$(\mathbb{B}^\varepsilon, \mathbb{S}^\varepsilon)$
converge to $(\mathbb{B}, \mathbb{S})$ in $C^{2, \alpha'}$ with $\alpha'<\alpha$ in the interval $[0, \dot{m}]$.

On the other hand, for the approximate sequence $\psi^\varepsilon$, $\mathbb{B}^\varepsilon(\psi^\varepsilon)=B^\varepsilon$,
$\mathbb{S}^\varepsilon(\psi^\varepsilon)=S^\varepsilon$,
and $\partial_{x_1}u_2^\varepsilon-\partial_{x_2}u_1^\varepsilon=\omega^\varepsilon$.
Then, by taking the limit of subsequence, we find that, in $\Omega(\dot{m})$,
\begin{eqnarray*}
\begin{cases}
\mathbb{B}(\bar{\psi})=\frac{1}{2}|\bu|^2+\frac{\gamma \bar{p}}{\bar{\rho}},\\[1mm]
\mathbb{S}(\bar{\psi})= \frac{\gamma \bar{p}}{(\gamma-1)\bar{\rho}^\gamma},\\[1mm]
\partial_{x_1}\bar{u}_2-\partial_{x_2}\bar{u}_1=-\bar{\rho} \mathbb{B}'(\bar{\psi})+\frac{\bar{\rho}^{\gamma}}{\gamma}\mathbb{S}'(\bar{\psi}).
\end{cases}
\end{eqnarray*}

Employing the same argument for the smooth flow in \S 4--\S 5, we can prove that,
for any $K\Subset\cup_{0<\dot{m}<m_d} \Omega(\dot{m})$,
$\bar{\psi}$ is a $C^{2, \alpha}$--smooth function
with $\partial_{x_2}\bar{\psi}>0$.
The argument for $\cup_{m_d<\dot{m}<m} \Omega(\dot{m})$ is similar.
Now we can make the standard diagonal argument and the blowup argument done similarly in Step 5 of the proof of Theorem \ref{prop:5.1}
in \S \ref{sec:existence smooth} to show the asymptotic behavior \eqref{equ:2.59a}--\eqref{equ:2.60a} when $\psi\neq m_d$.
\end{proof}

Then we prove the interior of $\Omega(m_d)$ is empty.
In fact, by the monotonicity of $\psi$ with respect to the $x_2$-coordinate, we have

\begin{lemma}\label{lem:6.2xw}
If, in addition, assumption \eqref{assumption:4.2} or \eqref{assumption:4.3} holds,
then the discontinuity defined by the level set $\{\psi=m_d\}$ is a Lipschitz curve.
\end{lemma}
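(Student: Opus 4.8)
The plan is to show that $\Omega(m_d)$, the level set $\{\bar\psi=m_d\}$, has empty interior, and then to upgrade this to the Lipschitz regularity of the curve by a contradiction argument that traces regularity from the approximate solutions $\psi^\varepsilon$. First I would recall from Lemma \ref{lem:6.2x} that away from $\{\bar\psi=m_d\}$ the limit $\bar\psi$ is $C^{2,\alpha}$, with $\partial_{x_2}\bar\psi>0$ there, and that on all of $\Omega$ one has $\partial_{x_2}\bar\psi\ge 0$ (the Lipschitz limit of $\partial_{x_2}\psi^\varepsilon>0$, which follows from Step 2--3 of the proof of Theorem \ref{prop:5.1}). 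The key point is the strict monotonicity: suppose, for contradiction, that $\Omega(m_d)$ contains a nonempty open set $U$. On $U$ we have $\bar\psi\equiv m_d$, hence $\partial_{x_2}\bar\psi\equiv 0$ on $U$. On the other hand, for $\dot m$ slightly less than $m_d$, the sets $\Omega(\dot m)$ approach $U$ from below, and by Lemma \ref{lem:6.2x} the equation $\mathrm{div}(\nabla\bar\psi/\bar\rho)=\bar\rho\mathbb{B}'-\frac{\bar\rho^\gamma}{\gamma}\mathbb{S}'$ holds in $\cup_{0<\dot m<m_d}\Omega(\dot m)$, which is an interior region whose closure touches $U$. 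Since $\partial_{x_2}\bar\psi$ satisfies the homogeneous linear elliptic equation \eqref{equ:5.18} in this region and is nonnegative, the strong maximum principle forces $\partial_{x_2}\bar\psi>0$ on the interior there, contradicting the vanishing of $\partial_{x_2}\bar\psi$ on the boundary portion adjacent to $U$ (via Hopf's lemma, exactly as in Step 5, \S\ref{subsubsec:non-degeneracy}). This rules out the interior ball touching $U$ from the region $\{\bar\psi<m_d\}$; the symmetric argument using $\{\bar\psi>m_d\}$ closes the other side. Hence $\Omega(m_d)$ has empty interior.

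Next I would show the level set is actually the graph of a function of $x_1$. Because $\partial_{x_2}\bar\psi\ge 0$ everywhere and $\bar\psi$ is continuous, for each fixed $x_1$ the slice $\{x_2:\bar\psi(x_1,x_2)=m_d\}$ is a (possibly degenerate or empty) closed interval $[\varphi_-(x_1),\varphi_+(x_1)]$. The emptiness of the interior of $\Omega(m_d)$ forces $\varphi_-(x_1)=\varphi_+(x_1)=:\gamma_d(x_1)$ for a.e.\ $x_1$; one then checks using monotonicity in $x_2$ and continuity of $\bar\psi$ that this holds for every $x_1$, and that $\bar\psi<m_d$ below the graph and $\bar\psi>m_d$ above it. So $\{\bar\psi=m_d\}$ is the graph of a single-valued function $\gamma_d$. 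It remains to prove $\gamma_d$ is Lipschitz, and this is where the approximate solutions enter: for each $\varepsilon$, define $\gamma_d^\varepsilon(x_1)$ as the unique $x_2$ with $\psi^\varepsilon(x_1,x_2)=m_d$ (unique since $\partial_{x_2}\psi^\varepsilon>0$). I would show the family $\{\gamma_d^\varepsilon\}$ is uniformly Lipschitz in $x_1$ with a bound independent of $\varepsilon$, and then pass to the limit. The uniform Lipschitz bound comes from the uniform $L^\infty$ estimate on the flow angle $\theta^\varepsilon$ (namely $|\theta^\varepsilon|\le\theta_B$ from Lemma \ref{lem:5.4qmax}, which survives the regularization by Lemma \ref{prop:9.1existence}): since $\{\psi^\varepsilon=m_d\}$ is a streamline, its slope is $\tan\theta^\varepsilon$, so $|(\gamma_d^\varepsilon)'(x_1)|=|\tan\theta^\varepsilon|\le\tan\theta_B$. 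By Arzel\`a--Ascoli a subsequence of $\gamma_d^\varepsilon$ converges uniformly on compacta to a Lipschitz function with the same bound, and by the Lipschitz convergence $\psi^\varepsilon\to\bar\psi$ established in Lemma \ref{prop:9.1existence} this limit must coincide with $\gamma_d$. Therefore $\gamma_d$ is Lipschitz with $\|\gamma_d'\|_{L^\infty}\le\tan\theta_B$, completing the proof.

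The main obstacle I anticipate is the passage from "empty interior of $\Omega(m_d)$" plus "$\partial_{x_2}\bar\psi\ge 0$" to the clean statement that the level set is a single-valued graph with the expected separation of $\{\bar\psi\lessgtr m_d\}$ on either side. Monotonicity in $x_2$ alone permits the slice to be a nondegenerate interval at isolated values of $x_1$ even when the total interior is empty (a "comb" with measure-zero teeth); ruling this out needs the interior ellipticity near the level set — more precisely, one uses that on each $\Omega(\dot m)$ with $\dot m\neq m_d$ the function $\partial_{x_2}\bar\psi$ is strictly positive (from \eqref{equ:5.18} and the strong maximum principle), so $\bar\psi$ is strictly increasing across each such slice, which together with continuity and $\Omega(m_d)$ having no interior forces single-valuedness for every $x_1$, not just a.e. A second, more routine technical point is justifying that the approximate level curves $\gamma_d^\varepsilon$ converge to $\gamma_d$ rather than to some other level set component; this is handled by the uniform convergence $\psi^\varepsilon\to\bar\psi$ in $\mathrm{Lip}(\Omega)$ together with $\inf_K\partial_{x_2}\psi^\varepsilon>0$ on compacta away from a neighborhood of the discontinuity, which pins down $\gamma_d^\varepsilon$ near $\gamma_d$ quantitatively.
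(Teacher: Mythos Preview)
Your proposal has a genuine gap in the heart of the argument --- the step showing $\Omega(m_d)$ has empty interior. The Hopf/strong-maximum-principle attack on $\mathcal{U}=\partial_{x_2}\bar\psi$ does not close. In the region $D=\{\bar\psi<m_d\}$ you have $\mathcal{U}>0$ in the interior and the elliptic equation \eqref{equ:velocity positive} holds there; but at a boundary point $P\in\Gamma_-\subset\partial D$, Hopf's lemma only gives one-sided information: the inward normal derivative $\partial_\nu\mathcal{U}(P)>0$ from within $D$. This is entirely compatible with $\mathcal{U}\equiv 0$ in the adjacent open set $U\subset\Omega(m_d)$, because $\bar\psi$ is only Lipschitz globally and $\partial_{x_2}\bar\psi$ is allowed to jump across $\Gamma_-$ (indeed, a jump in $\rho u_1$ is precisely what a vortex sheet does). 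The analogy with \S\ref{subsubsec:non-degeneracy} fails: there the limit function is $C^{2,\alpha}$ on the whole strip and the equation for $\mathcal{U}$ holds everywhere, so the strong maximum principle yields a uniform positive lower bound contradicting the assumed infimum; here the equation for $\mathcal{U}$ breaks down on $\Omega(m_d)$, and no contradiction emerges. A telling symptom: your argument never invokes assumption \eqref{assumption:4.2} or \eqref{assumption:4.3}, yet the lemma explicitly requires one of them.

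The paper's proof is of a completely different nature. It works with the vorticity $\bar\omega$ as a bounded measure: if $Int(\Omega(m_d))\neq\emptyset$, one chooses a test function $\eta$ straddling $\Gamma_-$, passes to Lagrangian coordinates \eqref{La coordinates9.11}, and splits the integral over $\Sigma_-=\Sigma\cap\{\bar\psi<m_d\}$ into a region $\Sigma_b$ where $\bar u_1>\sigma$ and a thin region $\Sigma_s$ where $\bar u_1\le\sigma$. With a carefully scaled $\eta$ one shows $\langle\eta,\bar\omega\rangle\ge -\frac{C\sigma}{b}|\Sigma_-|$. On the other hand, the sign conditions \eqref{assumption:4.2}--\eqref{assumption:4.3}, via Lemma~\ref{lem:3.1xw}, force $\bar\omega=-\bar\rho\mathbb{B}'+\frac{\bar\rho^\gamma}{\gamma}\mathbb{S}'\le -C^{-1}<0$ near $\psi=m_d$ from the appropriate side, giving $\langle\eta,\bar\omega\rangle\le -C^{-1}|\Sigma_-|$. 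Taking $\sigma$ small yields the contradiction. This is the step where the hypotheses actually enter, and your proposal bypasses it entirely.

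Your treatment of the Lipschitz regularity (second paragraph) is essentially sound and close to the paper's Step~1: the paper takes limits of the level curves $\{\bar\psi=\dot m\}$ as $\dot m\to m_d^\pm$, using the uniform angle bound $|\bar\theta|\le\theta_B$; you instead pass to the limit in the approximate streamlines $\gamma_d^\varepsilon$. Either route works once the empty-interior step is established, but that step needs the vorticity-sign argument above, not a maximum principle.
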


\begin{proof} We divide the proof into four steps.

\medskip
1. First, by Lemma \ref{lem:6.2x}, for any $m\neq m_d$,
the level set $\{\psi=m\}$ is a $C^{2,\alpha}$--curve.
Using the fact that $|\bar{\theta}|\leq\theta_B$ and $\mathcal{M}(\bar{\psi})<1$ in Lemma \ref{prop:9.1existence},
we know that, if $m\neq m_d$, the level set $\{\psi=m\}$ as a curve is uniformly bounded and equicontinuous,
so that there is a subsequence such that $\Omega(m)$ strongly converges to a continuous curve $\Gamma_+$ as $m\rightarrow m_d+$.
On the other hand, $\Omega(m)$ converges in the weak* sense to a Lipschitz curve $\tilde{\Gamma}_+$.
Note that, by $\partial_{x_2}\bar{\psi}>0$ in Lemma \ref{lem:6.2x},
$\Omega(m)$ is monotonically increasing with respect to $m$, and hence the limit is unique.
This implies that $\Gamma_+=\tilde{\Gamma}_+$, which is the boundary of $\cup_{m_d<\dot{m}<m} \Omega(\dot{m})$.
Similar argument yields that the upper boundary $\Gamma_-$ of $\cup_{m_d<\dot{m}<m} \Omega(\dot{m})$
is also a Lipschitz curve.

Let $\Omega(m_d)=\Omega\backslash\big((\cup_{0<\dot{m}<m_d} \Omega(\dot{m}))\cup (\cup_{m_d<\dot{m}<m} \Omega(\dot{m}))\big)$.
Then, by $\partial_{x_2}\bar{\psi}>0$ in Lemma \ref{lem:6.2x},
$\Omega(m_d)$ is connected, and its boundary $\Gamma_+\cup\Gamma_-$ is Lipschitz.

Therefore, it suffices to show that the interior of $\Omega(m_d)$ is an empty set.
This can be achieved by the contradiction argument.

\medskip
2. If the interior $Int(\Omega(m_d))$ of $\Omega(m_d)$ is non-empty,
then $\bar{u}_1=\bar{u}_2=0$ in $Int(\Omega(m_d))$, since $\bar{\psi}=m_d$.
By
Lemma \ref{lem:3.1xw}, without loss of generality, we assume
\begin{equation}\label{condition:8.2}
\big(\mathbb{B}\mathbb{S}^{-\frac{1}{\gamma}}\big)'(m_d-)>0,\qquad\,\,
\big(\mathbb{B}\mathbb{S}^{-\frac{2}{\gamma(\gamma+1)}}\big)'(m_d-)>0.
\end{equation}

The lower boundary of $\Omega(m_d)$ is $\Gamma_-$.
Then, for any test function $\eta$ compactly supported in a connected open set $\Sigma$ with $\Sigma\cap \Gamma_-\neq \emptyset$,
and for the bounded measure $\bar{\omega}$, we have
\begin{eqnarray}
\langle \eta, \bar{\omega}\rangle
=-\int_{\Sigma}\big(\bar{u}_2\partial_{x_1}\eta-\bar{u}_1\partial_{x_2}\eta\big)\,\dd\bf{x},
\end{eqnarray}
where $\langle\eta, \bar{\omega}\rangle$ is the limit of $\int \eta\omega^{\varepsilon} \dd\bf{x}$.

\begin{figure}[htbp]
\small \centering
\includegraphics[width=10cm]{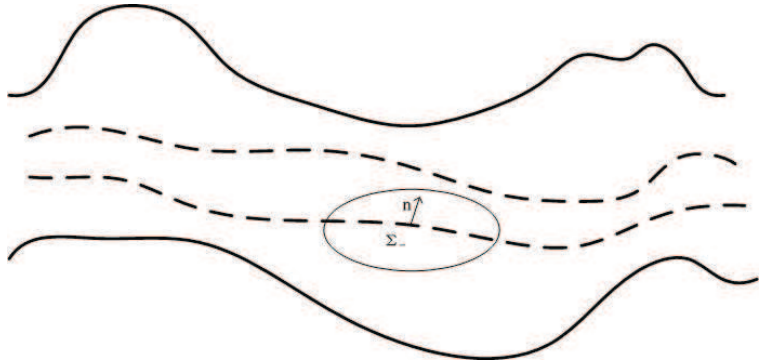}
\caption{Contradiction argument}
\label{Fig2}
\end{figure}

As shown in Figure \ref{Fig2},
let $\Sigma_-= Int(\cup_{0<\dot{m}<m_d} \Omega(\dot{m}))\cap \Sigma $ be the lower part of $\Sigma$,
while the upper part is  $\Sigma_+= Int ( \Omega(m_d))\cap \Sigma$.
Then
$$
\int_{\Sigma_+}\big(\bar{u}_2\partial_{x_1}\eta-\bar{u}_1\partial_{x_2}\eta\big) \dd\bx=0.
$$

3. Next, we  prove the integral:
$$
\int_{\Sigma_-}\big(\bar{u}_2\partial_{x_1}\eta-\bar{u}_1\partial_{x_2}\eta\big)\, \dd\bf{x}
$$
is almost negative.

First, we notice that, in $Int (\cup_{0<\dot{m}<m_d} \Omega(\dot{m}))$,
$\partial_{x_2}\bar{\psi}>0$, which implies that $\bar{\rho} \bar{u}_1>0$.
Divide $\Sigma_-$ into two subregions $\Sigma_-=\Sigma_s\cup\Sigma_b$ with
\begin{eqnarray*}
\Sigma_b:=\{\bx\, :\, \bar{u}_1>\sigma>0\}\cap\Sigma_-,\qquad
\Sigma_s:=\{\bx\,:\, \sigma\geq\bar{u}_1>0\}\cap\Sigma_-,
\end{eqnarray*}
where $\sigma$ is a given constant that is sufficiently small.
The reason of the division is that the integrand can be shown to be negative,
if the horizontal speed $u_1$ has a lower positive bound,
so that the integrand on $\Sigma_s$ is almost negative with an error controlled by the small constant $\sigma$.

Next, let
\begin{equation}\label{La coordinates9.11}
\bz=(z_1, z_2):=(x_1, \bar{\psi}(\bx)).
\end{equation}
In the new coordinates, $\Sigma_-$, $\Sigma_s$, and $\Sigma_b$ are transformed into $\Sigma_-'$, $\Sigma_s'$, and $\Sigma_b'$,
respectively.
Then
\begin{eqnarray}
&&\int_{\Sigma_-}\big(\bar{u}_2\partial_{x_1}\eta-\bar{u}_1\partial_{x_2}\eta\big)\, \dd\bf{x}\nonumber\\
&&=
\int_{\Sigma'_-}\Big(\bar{u}_2(\frac{\partial \eta}{\partial z_1}-\bar{\rho} \bar{u}_2\frac{\partial \eta}{\partial z_2})
   -\bar{u}_1(\bar{\rho} \bar{u}_1\frac{\partial \eta}{\partial z_2})\Big)\frac{1}{\bar{\rho} \bar{u}_1} \dd\mathbf{z}\nonumber\\
&&=\int_{\Sigma_-'}\Big(\bar{u}_2\frac{\partial \eta}{\partial z_1}-\bar{\rho} \bar{q}^2\frac{\partial \eta}{\partial z_2}\Big)
   \frac{1}{\bar{\rho} \bar{u}_1}\dd\mathbf{z}\nonumber\\
&&=\int_{\Sigma_s'}\Big(\bar{u}_2\frac{\partial \eta}{\partial z_1}-\bar{\rho} \bar{q}^2\frac{\partial \eta}{\partial z_2}\Big) \frac{1}{\bar{\rho} \bar{u}_1}\,\dd\mathbf{z}
  +\int_{\Sigma'_b}\Big(\bar{u}_2\frac{\partial \eta}{\partial z_1}-\bar{\rho} \bar{q}^2\frac{\partial \eta}{\partial z_2}\Big) \frac{1}{\bar{\rho} \bar{u}_1}\dd\mathbf{z}.
  \label{Idvertest}
\end{eqnarray}

By choosing a proper $\eta$,
$-\bar{\rho} \bar{q}^2\frac{\partial \eta}{\partial z_2} \frac{1}{\bar{\rho} \bar{u}_1}$ is always negative in $\Sigma_-'$,
while the sign of integral $\int_{\Sigma'_b}\bar{u}_2\frac{\partial \eta}{\partial z_1} \frac{1}{\bar{\rho} \bar{u}_1}\dd\bf{z}$ is not clear.
In order to control it,
we will use the negative term to control the other.

Let function $\varrho$, supported in $[-1,1]$, satisfy that $\varrho(0)=1$, $\varrho(y)=\varrho(-y)$, $\int_{-1}^{1}\varrho(y)\dd y=1$,
and $\varrho'(y)\ge 0$ when $y\le0$.
Denote
\begin{equation}\label{6.14a}
\eta(\bz):=\varrho(\frac{z_1-z_{01}}{b})\varrho(\frac{z_2-m_d}{b w}),
\end{equation}
where $b$ and $w$ are two parameters defined later.
Then $\Sigma'_-$ is $[z_{01}-b, z_{01}+b]\times[m_d-bw, m_d]$.
In this case,
\begin{eqnarray}
\int_{\Sigma'_-}\Big|\frac{\partial \eta}{\partial z_1}\Big|\,\dd\mathbf{z}
=\int_{z_{01}-b}^{z_{01}+b}\Big|\frac{\partial \varrho(\frac{z_1-z_{01}}{b})}{\partial z_1}\Big|\, \dd z_1
\int_{m_d-bw}^{m_d}\varrho(\frac{z_2-m_d}{b w})\,\dd z_2
=bw
\end{eqnarray}
and
\begin{eqnarray}
\int_{\Sigma'_-} \frac{\partial \eta}{\partial z_2} \dd\mathbf{z}
=\int_{z_{01}-b}^{z_{01}+b}\varrho(\frac{z_1-z_{01}}{b})\,\dd z_1
\int_{m_d-bw}^{m_d}\frac{\partial\varrho(\frac{z_2-m_d}{b w})}{\partial z_2}\dd z_2
=b.
\end{eqnarray}

Now we show the following inequalities:
\begin{align}
\Big|\int_{\Sigma'_b}\bar{u}_2\frac{\partial \eta}{\partial z_1} \frac{1}{\bar{\rho} \bar{u}_1}\,\dd\mathbf{z}\Big|
&\leq \sup_{\Sigma_b'} \big\{\frac{\bar{u}_2}{\bar{\rho} \bar{u}_1}\big\}\int_{\Sigma'_b}\Big|\frac{\partial \eta}{\partial z_1}\Big|\,\dd\mathbf{z}\nonumber\\
&\leq \sup_{\Sigma_b'} \big\{\frac{\bar{u}_2}{\bar{\rho} \bar{u}_1}\big\}\int_{\Sigma'_-}\Big|\frac{\partial \eta}{\partial z_1}\Big|\,\dd\mathbf{z}\nonumber\\
&= w \sup_{\Sigma_b'} \big\{\frac{\bar{u}_2}{\bar{\rho} \bar{u}_1}\big\}\int_{\Sigma'_-}\frac{\partial \eta}{\partial z_2}\,\dd\mathbf{z}\nonumber\\
&= w \sup_{\Sigma_b'} \big\{\frac{\bar{u}_2}{\bar{\rho} \bar{u}_1}\big\}\int_{\Sigma'_s}\frac{\partial \eta}{\partial z_2}\,\dd\mathbf{z}
+ w \sup_{\Sigma_b'} \big\{\frac{\bar{u}_2}{\bar{\rho} \bar{u}_1}\big\}\int_{\Sigma'_b}\frac{\partial \eta}{\partial z_2}\,\dd\mathbf{z}.\label{7.2.3.1}
\end{align}
Choosing $\sigma \rho_{\rm cr}\cot{\theta_B}\geq w$, we have
$$
w \sup_{\Sigma_b'} \big\{\frac{\bar{u}_2}{\bar{\rho} \bar{u}_1}\big\}\int_{\Sigma'_b}\frac{\partial \eta}{\partial z_2}\,\dd\mathbf{z}
\leq \inf_{\Sigma_b'} \big\{\frac{\bar{\rho}\bar{q}^2}{\bar{\rho} \bar{u}_1}\big\}\int_{\Sigma'_b}\frac{\partial \eta}{\partial z_2}\,\dd\mathbf{z}\leq
\int_{\Sigma'_b}\bar{\rho} \bar{q}^2\frac{\partial \eta}{\partial z_2} \frac{1}{\bar{\rho} \bar{u}_1}\,\dd\mathbf{z}.
$$
Then
$$
\Big|\int_{\Sigma'_b}\bar{u}_2\frac{\partial \eta}{\partial z_1} \frac{1}{\bar{\rho} \bar{u}_1}\,\dd\mathbf{z}\Big|
\leq \int_{\Sigma'_b}\rho q^2\frac{\partial \eta}{\partial z_2} \frac{1}{\rho u_1}\dd\mathbf{z}+w\sup_{\Sigma_b'}\big\{\frac{u_2}{\rho u_1}\big\}
\int_{\Sigma'_s} \frac{\partial \eta}{\partial z_2}\, \dd\mathbf{z}.
$$

By \eqref{Idvertest}, we have
\begin{align*}
\int_{\Sigma_-}\big(\bar{u}_2\partial_{x_1}\eta-\bar{u}_1\partial_{x_2}\eta\big)\, \dd\mathbf{x}
&\leq\int_{\Sigma_s'}\Big(\bar{u}_2\frac{\partial \eta}{\partial z_1}-\bar{\rho} \bar{q}^2\frac{\partial \eta}{\partial z_2}\Big)\frac{1}{\bar{\rho} \bar{u}_1}\,\dd\mathbf{z}
 +w\sup_{\Sigma_b'}\big\{\frac{\bar{u}_2}{\bar{\rho} \bar{u}_1}\big\} \int_{\Sigma'_s} \frac{\partial \eta}{\partial z_2}\, \dd\mathbf{z}\\
&\leq\int_{\Sigma_s'}\frac{\bar{u}_2}{\bar{\rho}\bar{u}_1}\frac{\partial \eta}{\partial z_1} \dd\mathbf{z}
 +w\sup_{\Sigma_b'}\big\{\frac{\bar{u}_2}{\bar{\rho} \bar{u}_1}\big\} \int_{\Sigma'_s} \frac{\partial \eta}{\partial z_2} \dd\bf{z}\\
&\leq\sup_{\Sigma_s'}\Big|\frac{\partial\eta}{\partial z_1}\Big|\int_{\Sigma_s}|\bar{u}_2|\dd\mathbf{x}
  +w\sup_{\Sigma_b'}\big\{\frac{\bar{u}_2}{\bar{\rho} \bar{u}_1}\big\}\sup_{\Sigma_s'}\big\{\frac{\partial\eta}{\partial z_2}\big\}\int_{\Sigma_s}\bar{\rho} \bar{u}_1 \dd\mathbf{x}\\
&\leq\frac{C\sigma}{b}|\Sigma_s|,
\end{align*}
where $C$ is the uniform positive constant independent of $w$,
and we have used the fact that $|u_2|\leq C\sigma$ in $\Sigma_s$,
by the estimate: $|\theta^{\varepsilon}|\leq\theta_B$.

On the other hand, by \eqref{condition:8.2},
the vorticity $\omega$ is a negative measure with a uniform lower bound.
In fact, we have
\begin{eqnarray}
\omega=-\rho \mathbb{B}'+\frac{\rho^{\gamma}}{\gamma}\mathbb{S}'
=-\rho\Big(\mathbb{B}'-\frac{\mathbb{B}\mathbb{S}'}{\gamma\mathbb{S}}+\frac{q^2}{2}\frac{\mathbb{S}'}{\gamma \mathbb{S}}\Big).
\end{eqnarray}
Note that
\eqref{condition:8.2} is equivalent to the inequality:
$$
\frac{\mathbb{B}'}{\mathbb{B}}-\frac{\mathbb{S}'}{\gamma\mathbb{S}}>C^{-1},\qquad\,\,
\frac{\mathbb{B}'}{\mathbb{B}}-\frac{2}{\gamma(\gamma+1)}\frac{\mathbb{S}'}{\mathbb{S}}>C^{-1}
$$
in the small neighborhood $\Sigma_-$.
Then, by the fact that $0<q<c$,
$\omega$ is strictly negative when $0<q<c$.
This implies that $w\leq-C^{-1}<0$ in $\Sigma_-$.
Therefore, we have
$$
-\frac{C\sigma}{b}|\Sigma_-|\leq -\frac{C\sigma}{b}|\Sigma_s|\leq-\int_{\Sigma_-}\big(u_2\partial_{x_1}\eta-u_1\partial_{x_2}\eta\big)\, \dd\mathbf{x}
= \langle w,\eta \rangle \leq -C^{-1}|\Sigma_-|<0,
$$
which is impossible, if $\sigma$ is chosen small enough.
Thus, we have
$$
\overline{\big(\cup_{0<\dot{m}<m_d} \Omega(\dot{m})\big)\cup \big(\cup_{m_d<\dot{m}<m} \Omega(\dot{m})\big)}\equiv\overline{\Omega}.
$$

4. Therefore, we have proved that $\bar{\psi}$ is a solution of {\bf Problem 3.2($m$)}
with $\mathcal{M}(\bar{\psi})<1$, $\partial_{x_2}\bar{\psi}>0$,  and $|\bar{\theta}|\leq\theta_B$ \textit{a.e.}.
Furthermore, $\Omega(m_d)$ is a Lipschitz curve.
In any compact set $K\subset \Omega\backslash\Omega(m_d)$, $\bar{\psi}\in C^\infty(K)$
and $\partial_{x_2}\bar{\psi}>0$.
This completes the proof.
\end{proof}

In the proof of Lemma \ref{lem:6.5q}, we have also proved the following corollary
on the distance between the
discontinuity and the nozzle walls.

\begin{corollary}
For the weak solution with
the discontinuity obtained by Lemma {\rm \ref{prop:9.1existence}},
the distance between the
discontinuity $\Gamma$ and the solid walls $W_1$ or $W_2$ has a positive lower bound.
\end{corollary}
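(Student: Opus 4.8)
The plan is to deduce the corollary from a uniform-in-$\varepsilon$ separation of the level set $\{\psi^\varepsilon=m_d\}$ from the walls $W_1,W_2$, for the approximate stream functions $\psi^\varepsilon$ constructed in Lemma~\ref{prop:9.1existence}, and then to pass to the limit using the Lipschitz convergence $\psi^\varepsilon\to\bar\psi$. The heart of the matter is already contained in the proof of Lemma~\ref{lem:6.5q}, where the distance from a level set $\{\psi=m_0\}$ to $W_2$ was bounded below in terms of $\|(\mathbb{B},\mathbb{S})\|_{L^\infty([m_0,m])}$ and $m-m_0$; here one only has to take $m_0=m_d$ and verify that all constants stay uniform in $\varepsilon$.

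First I would observe that, under \eqref{assumption:3.44dis}--\eqref{assumption:4.3}, the inlet data $(u_{1-},S_-)$ is bounded on $[0,1]$: the one-sided limits at $x_2=x_d$ exist because the jumps $[u_{1-}^2S_-^{-\frac1\gamma}]$ and $[S_-]$ are finite. Consequently $(\mathbb{B},\mathbb{S})$, and all the regularizations $(\mathbb{B}^\varepsilon,\mathbb{S}^\varepsilon)$ from Lemma~\ref{prop:9.1existence}, are bounded on $[0,m]$ by a constant depending only on the data, since the operator $\mathcal{M}_\varepsilon$ is $L^\infty$-contractive ($I_{[\varepsilon_0,1-\varepsilon_0]}*j_\varepsilon\in[0,1]$ and mollification does not increase sup-norms), while $\inf\mathbb{S}^\varepsilon>0$ by \eqref{assumption:3.40}. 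Hence the critical speed $\hat Q(\cdot)$ of \eqref{hat-Q} is bounded above by a constant $\bar C$ depending only on the data, and the bound \eqref{3.11} for $\psi^\varepsilon$ gives the \emph{uniform} estimate $|\nabla_{\bx}\psi^\varepsilon|\le\bar C$ in $\Omega$; in particular each $\psi^\varepsilon$, and hence its limit $\bar\psi$, is $\bar C$-Lipschitz on $\overline\Omega$.

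Next, using the $C^{2,\alpha}$-regularity and the uniform bound \eqref{Con:Boundary2} of the walls, I would fix a tubular neighborhood of $W_2$ of width $\delta^{\mathrm{geom}}>0$ depending only on $(w_1,w_2)$, so that $\{\bx\in\overline\Omega:\mathrm{dist}(\bx,W_2)<\delta^{\mathrm{geom}}\}$ is covered by inward unit-normal segments of length $\delta^{\mathrm{geom}}$ issuing from $W_2$. Since $\psi^\varepsilon\equiv m$ on $W_2$ and $|\nabla_{\bx}\psi^\varepsilon|\le\bar C$, along each such segment $\psi^\varepsilon\ge m-\bar C\,\delta$ at arclength $\delta$; taking $\delta_0:=\min\{\delta^{\mathrm{geom}},\,(m-m_d)/(2\bar C),\,m_d/(2\bar C)\}$ forces $\psi^\varepsilon\ge(m+m_d)/2>m_d$ on the whole $\delta_0$-neighborhood of $W_2$, and, by the symmetric argument using $\psi^\varepsilon\equiv0$ on $W_1$, $\psi^\varepsilon\le m_d/2<m_d$ on the whole $\delta_0$-neighborhood of $W_1$. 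Thus $\{\psi^\varepsilon=m_d\}$ stays at distance $\ge\delta_0$ from both walls, with $\delta_0$ independent of $\varepsilon$. (Combined with $\partial_{x_2}\psi^\varepsilon>0$, this even says $\{\psi^\varepsilon\le m_d\}$ and $\{\psi^\varepsilon\ge m_d\}$ are at distance $\ge\delta_0$ from $W_2$ and $W_1$ respectively.)

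Finally I would let $\varepsilon\to0$. By Lemma~\ref{prop:9.1existence} the convergence $\psi^\varepsilon\to\bar\psi$ holds in $\mathrm{Lip}(\Omega)$, so for every $\bx$ with $\mathrm{dist}(\bx,W_2)<\delta_0$ one gets $\bar\psi(\bx)=\lim_{\varepsilon\to0}\psi^\varepsilon(\bx)\ge(m+m_d)/2>m_d$, and for every $\bx$ with $\mathrm{dist}(\bx,W_1)<\delta_0$ one gets $\bar\psi(\bx)\le m_d/2<m_d$. Hence the discontinuity $\Gamma$ — which by Lemma~\ref{lem:6.2xw} is exactly the level set $\{\bar\psi=m_d\}$ — meets neither the $\delta_0$-neighborhood of $W_2$ nor that of $W_1$, which gives $\mathrm{dist}(\Gamma,W_j)\ge\delta_0>0$ for $j=1,2$. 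The one delicate point is the $\varepsilon$-uniformity of $\bar C$: smoothing the jump of $(u_{1-},S_-)$ across $x_2=x_d$ must not spoil the $L^\infty$-bounds of $(\mathbb{B}^\varepsilon,\mathbb{S}^\varepsilon)$, hence of $\hat Q$, which is precisely the $L^\infty$-contractivity of $\mathcal{M}_\varepsilon$; the conversion of a bound on the variation of $\psi^\varepsilon$ into a lower bound on Euclidean distance is then handled once and for all by the tubular-neighborhood choice of $\delta_0$.
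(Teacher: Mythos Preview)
Your proof is correct and rests on the same core observation as the paper's: a uniform Lipschitz bound on the stream function, together with the boundary values $\psi=0$ on $W_1$ and $\psi=m$ on $W_2$, forces the level set $\{\psi=m_d\}$ to stay a definite distance from both walls. The execution differs. You work with the approximations $\psi^\varepsilon$, carefully verify the $\varepsilon$-uniformity of the Lipschitz constant via the $L^\infty$-contractivity of $\mathcal{M}_\varepsilon$, run the argument along inward-normal segments in a tubular neighborhood of the walls, and then pass to the limit. The paper instead works directly with the limit $\bar\psi$ (already known to be $C^{2,\alpha}$ away from $\{\bar\psi=m_d\}$ by Lemma~\ref{lem:6.2x}) and simply integrates $\partial_{x_2}\bar\psi\le C$ along vertical segments from a point where $\bar\psi=m_d+\varepsilon$ up to $W_2$, obtaining $w_2(x_1)-\hat x_2\ge (m-m_d-\varepsilon)/C$. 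Your route is slightly longer but has the virtue of being self-contained about the geometry (the tubular neighborhood handles the conversion from variation of $\psi$ to Euclidean distance cleanly) and about the passage to the limit; the paper's route is quicker because vertical integration is natural once one knows $\partial_{x_2}\bar\psi>0$, though it tacitly uses that vertical and Euclidean distances to $W_2$ are comparable by \eqref{Con:Boundary2}.
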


\begin{proof}
Note that the level set  $\{\psi=m_d\in(0,m)\}$ is the discontinuity,
and $\psi\in C^{2,\alpha}$ in the domain determined by $\psi\in(m_d+\varepsilon,m)$.
Then
$$
m-m_d-\varepsilon=\int_{\hat{x}_2}^{w_2(x_1)}\psi_{x_2}\,\textrm{d}x\leq C\big(w_2(x_1)-\hat{x}_2\big),
$$
where $\psi(x_1,\hat{x}_2)=m_d+\varepsilon$, and
$C$ is determined by \eqref{3.11}.
Therefore, we have
$$
\textrm{dist}(\Gamma,W_2)\geq\frac{m-m_d-\varepsilon}{C}.
$$
This completes the proof.
\end{proof}

Next, we see that the discontinuity is a characteristic discontinuity.

\begin{lemma}
The solution satisfies
\eqref{equ:2.28xw}
as the normal trace in the sense of Chen-Frid \cite{Chen7}.
\end{lemma}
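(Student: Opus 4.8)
The plan is to read off $(\bar\rho\bar\bu\cdot\bn)_{\pm}=0$ from the stream-function representation, using the Gauss--Green formula for divergence-measure fields of Chen--Frid \cite{Chen7}. First I would record the structure already available: by Lemmas \ref{prop:9.1existence}--\ref{lem:6.2xw} and the corollary above, $\bar\psi\in\mathrm{Lip}(\Omega)$, $\bar\rho\bar\bu=(\partial_{x_2}\bar\psi,\,-\partial_{x_1}\bar\psi)$ a.e., and the discontinuity $\Gamma=\{\bx\in\Omega:\bar\psi(\bx)=m_d\}$ is a Lipschitz curve whose distance to $\partial\Omega$ is bounded below, splitting $\Omega$ into the open subdomains $\Omega_{\pm}=\{\bx\in\Omega:\pm(\bar\psi(\bx)-m_d)>0\}$, each with (locally) Lipschitz boundary made of an arc of $\Gamma$ and an arc of a nozzle wall. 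Since $\eqref{weaksolution}_1$ says $\mathrm{div}_{\bx}(\bar\rho\bar\bu)=0$ in $\mathcal{D}'(\Omega)$, the field $\bar\rho\bar\bu$ is a bounded divergence-measure field on $\Omega$ with vanishing divergence measure, so — after localizing to a large ball containing the support of any given test function — the normal traces $(\bar\rho\bar\bu\cdot\bn)_{\pm}$ of $\bar\rho\bar\bu$ on $\Gamma$ from $\Omega_{\pm}$ are well defined in the sense of Chen--Frid and are characterized by the attendant Gauss--Green identities.

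The key step I would carry out is to show that $\int_{\Omega_+}\bar\rho\bar\bu\cdot\nabla\phi\,\dd\bx=0$ for every $\phi\in C^{2}_{0}(\Omega)$, and likewise on $\Omega_-$. Setting $\phi^{\perp}:=(-\partial_{x_2}\phi,\,\partial_{x_1}\phi)$, one has $\mathrm{div}_{\bx}\phi^{\perp}\equiv0$ and, because $\bar\psi\in\mathrm{Lip}(\Omega)$,
\[
\bar\rho\bar\bu\cdot\nabla\phi=\nabla_{\bx}\bar\psi\cdot\phi^{\perp}=\mathrm{div}_{\bx}\big(\bar\psi\,\phi^{\perp}\big)\qquad\text{a.e. in }\Omega.
\]
The vector field $\bar\psi\,\phi^{\perp}$ is Lipschitz and vanishes near $\partial\Omega$, so the divergence theorem on the Lipschitz domain $\Omega_+$ yields
\[
\int_{\Omega_+}\bar\rho\bar\bu\cdot\nabla\phi\,\dd\bx=\int_{\partial\Omega_+}\bar\psi\,\big(\phi^{\perp}\cdot\nnu_{\Omega_+}\big)\,\dd\mathcal{H}^{1}=m_d\int_{\Gamma}\phi^{\perp}\cdot\nnu_{\Omega_+}\,\dd\mathcal{H}^{1}=0,
\]
where $\nnu_{\Omega_+}$ is the outer unit normal of $\Omega_+$, the middle equality uses $\bar\psi\equiv m_d$ on $\Gamma$ together with $\phi^{\perp}\equiv0$ near the walls, and the last one comes from applying the divergence theorem once more to $\phi^{\perp}$ on $\Omega_+$ (its divergence vanishes, and its flux through the walls is zero). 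The same computation applies on $\Omega_-$.

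Finally I would feed this into the Chen--Frid Gauss--Green formula on $\Omega_+$: since the divergence measure of $\bar\rho\bar\bu$ there is zero, this formula reads $\langle(\bar\rho\bar\bu\cdot\nnu_{\Omega_+})|_{\partial\Omega_+},\,\phi\rangle=\int_{\Omega_+}\bar\rho\bar\bu\cdot\nabla\phi\,\dd\bx$, hence this pairing is zero for all $\phi\in C^{2}_{0}(\Omega)$; since such $\phi$ vanish near the walls and $\Gamma$ lies in the interior of $\Omega$, their traces on $\Gamma$ range over a dense class, so $(\bar\rho\bar\bu\cdot\nnu_{\Omega_+})=0$ on $\Gamma$, i.e. $(\bar\rho\bar\bu\cdot\bn)_{+}=0$, and likewise $(\bar\rho\bar\bu\cdot\bn)_{-}=0$ — which is \eqref{equ:2.28xw} as normal traces in the sense of Chen--Frid. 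Since the mass flux then vanishes across $\Gamma$ from both sides, $\Gamma$ cannot be a shock; the Rankine--Hugoniot relations \eqref{R-HCondition}, read in this trace sense, then give $[\bar p]=0$ on $\Gamma$, and, the flow being subsonic, the classification in \S\ref{sec:2b} (following \cite{CFr}) identifies $\Gamma$ as a vortex sheet or an entropy wave. The main obstacle I anticipate is technical rather than conceptual: carrying the argument cleanly through the Chen--Frid trace machinery on the merely Lipschitz subdomains $\Omega_{\pm}$, and identifying the vanishing boundary pairing with the pointwise condition \eqref{equ:2.28xw}; I emphasize that this route uses no boundary regularity of $\bar\psi$ up to $\Gamma$, only its global Lipschitz bound and the Lipschitz regularity of $\Gamma$ proved above.
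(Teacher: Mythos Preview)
Your argument is correct and takes a genuinely different route from the paper. The paper proves \eqref{equ:2.28xw} by going through the Rankine--Hugoniot relations for the full system: from the weak formulation one has $[\overline{\rho\bu}\cdot\bn]=0$, $[\overline{\rho\bu B}\cdot\bn]=0$, $[\overline{\rho\bu S}\cdot\bn]=0$ as Chen--Frid traces, and since $(B,S)\in BV$ with at least one of $[\bar B]$, $[\bar S]$ nonzero across the discontinuity, the factorization $(\overline{\rho\bu}\cdot\bn)_{\pm}[\bar B]=(\overline{\rho\bu}\cdot\bn)_{\pm}[\bar S]=0$ forces $(\overline{\rho\bu}\cdot\bn)_{\pm}=0$. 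Your approach instead exploits only the stream-function identity $\bar\rho\bar\bu=\nabla^{\perp}\bar\psi$ together with the constancy $\bar\psi\equiv m_d$ on $\Gamma$: writing $\bar\rho\bar\bu\cdot\nabla\phi=\mathrm{div}(\bar\psi\,\phi^{\perp})$ and integrating by parts twice on the Lipschitz subdomain $\Omega_{+}$ gives $\int_{\Omega_+}\bar\rho\bar\bu\cdot\nabla\phi\,\dd\bx=0$ directly, whence the Chen--Frid trace vanishes. Your route is more elementary---it uses neither the transport of $(B,S)$ nor the nontriviality of the jump, and would apply verbatim to any bounded $\nabla^{\perp}\bar\psi$ with $\bar\psi$ Lipschitz and constant along a Lipschitz level curve---while the paper's route has the advantage of packaging the identification of $\Gamma$ as a vortex sheet or entropy wave into the same R--H calculation rather than appending it at the end as you do.
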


\begin{proof}
Note that the solutions satisfies \eqref{equ:2.28xw} as the normal trace in the sense of Chen-Frid \cite{Chen7} if and only if
\begin{equation}\label{6.17xw}
(\overline{\rho \bu}\cdot\bn)_{\pm}=0\qquad \mbox{on }\Gamma.
\end{equation}
By the construction, the solutions are actually the weak solutions of \eqref{1.5}. Then
\begin{eqnarray}\label{R-HConidtionVortex}
\begin{cases}
[\overline{\rho \bu}\cdot\bn]=0,\\
[\overline{\rho \bu B}\cdot \bn]=0,\\
[\overline{\rho \bu S}\cdot \bn]=0
\end{cases}
\end{eqnarray}
as the normal trace in the sense of Chen-Frid \cite{Chen7}.
Since $(B, S)\in BV$,  then $(\ref{R-HConidtionVortex})_2$--$(\ref{R-HConidtionVortex})_3$
can be rewritten as
$$
[\overline{\rho \bu}\,\bar{B}\cdot \bn]=[\overline{\rho \bu}\, \bar{S}\cdot \bn]=0.
$$
Then, by $(\ref{R-HConidtionVortex})_1$,
$$
(\overline{\rho \bu}\cdot\bn)_{\pm}[\bar{B}]=(\overline{\rho \bu}\cdot\bn)_{\pm}[\bar{S}]=0.
$$
By the definition, we know that, for the entropy wave, $[\bar{S}]\neq0$, so that \eqref{6.17xw} holds.
For the vortex sheet, $[\bar{B}]\neq0$, so that \eqref{6.17xw} also holds.
This completes the proof.
\end{proof}

Finally, let us consider the uniqueness of weak solutions with discontinuities.
Owing to the loss of estimate \eqref{ine:5.32},
the proof is different from that for the smooth case.

\begin{lemma}\label{prop:9.2uniqueness}
The weak solutions obtained in Lemma {\rm \ref{prop:9.1existence}} are unique.
\end{lemma}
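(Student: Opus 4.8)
The plan is to reduce to the situation of Lemma \ref{prop:6.1} by passing to the Lagrangian coordinates, in which the discontinuities of \emph{both} solutions become one and the same fixed straight line $z_2=m_d$. Let $(\rho^{(i)},\bu^{(i)},p^{(i)})$, $i=1,2$, be two weak solutions of \textbf{Problem 3.2($m$)} enjoying the properties established in Lemmas \ref{prop:9.1existence}--\ref{lem:6.2xw}, with stream functions $\psi^{(i)}$ and discontinuity curves $\Gamma^{(i)}=\{\psi^{(i)}=m_d\}$. First I would show, by running the arguments of \S \ref{sec:existence smooth}--\S \ref{sec:uniqueness} on each of the two smooth components of $\psi^{(i)}$ separately -- in the Lagrangian coordinates each such component is a strip $\R\times(0,m_d)$ or $\R\times(m_d,m)$ with \emph{flat} boundary portion $z_2=m_d$, so that Hopf's lemma and the strong maximum principle apply to $\partial_{x_2}\psi^{(i)}$ through \eqref{equ:5.18} -- that $\partial_{x_2}\psi^{(i)}$ admits a positive lower bound up to $\Gamma^{(i)}$ from either side. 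This is what replaces the global bound \eqref{ine:5.32}, which is no longer available. Consequently, setting $\varphi^{(i)}(\bz):=x_2$ under $\bz=(x_1,\psi^{(i)})$, the function $\varphi^{(i)}$ is continuous and Lipschitz on $\R\times[0,m]$, piecewise $C^{2,\alpha}$ on $\R\times([0,m_d)\cup(m_d,m])$, with $\varphi^{(i)}|_{z_2=0}=w_1(z_1)$, $\varphi^{(i)}|_{z_2=m}=w_2(z_1)$, and $\nabla_\bz\varphi^{(i)}\in L^\infty(\R\times[0,m])$; indeed $|\partial_{z_1}\varphi^{(i)}|=|\tan\theta^{(i)}|\le\tan\theta_B$, while $\partial_{z_2}\varphi^{(i)}=1/(\rho^{(i)}u_1^{(i)})$ is bounded below by subsonicity and above by the nondegeneracy just obtained.

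Next I would observe that, transforming the third equation in \eqref{weaksolution} into the Lagrangian coordinates exactly as in \eqref{weakLequation}, each $\varphi^{(i)}$ is a \emph{distributional} solution of \eqref{equ:6.14} on all of $\R\times(0,m)$:
\begin{equation*}
\int_{\R\times[0,m]}\Big(\frac{\partial_{z_1}\varphi^{(i)}}{\rho(\nabla_\bz\varphi^{(i)},z_2)\,\partial_{z_2}\varphi^{(i)}}\,\partial_{z_1}\zeta+p(\nabla_\bz\varphi^{(i)},z_2)\,\partial_{z_2}\zeta\Big)\,\dd\bz=0
\qquad\text{for all }\zeta\in C^1_0(\R\times(0,m)).
\end{equation*}
Since the test functions are free to be nonzero on $z_2=m_d$, this identity already encodes both the continuity of $\varphi^{(i)}$ there and the Rankine--Hugoniot relation $[p]=0$ across the characteristic discontinuity. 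Crucially, in the Lagrangian variable the pair $(\mathbb{B},\mathbb{S})(z_2)$ is \emph{the same} fixed pair of functions for both solutions, so the flux functions appearing above depend on the solution only through $\nabla_\bz\varphi^{(i)}$ and $z_2$ (they are Lipschitz in $z_2$ away from $z_2=m_d$ and may jump across it, but stay bounded and subsonic-admissible). Subtracting the two identities and writing each coefficient as an integral over $\tau\in(1,2)$ of the $\nabla_\bz\varphi$-gradient of the corresponding flux along the segment $\varphi^{(\tau)}:=(2-\tau)\varphi^{(1)}+(\tau-1)\varphi^{(2)}$ -- precisely as in the proof of Lemma \ref{prop:6.1} -- one finds that $\hat\varphi:=\varphi^{(1)}-\varphi^{(2)}$ solves, distributionally on all of $\R\times(0,m)$, a linear equation in pure divergence form
\begin{equation*}
\partial_{z_1}\big(\hat{a}_{11}\partial_{z_1}\hat\varphi+\hat{a}_{12}\partial_{z_2}\hat\varphi\big)+\partial_{z_2}\big(\hat{a}_{21}\partial_{z_1}\hat\varphi+\hat{a}_{22}\partial_{z_2}\hat\varphi\big)=0,
\end{equation*}
with $\hat\varphi=0$ on $z_2=0,m$, and with $(\hat{a}_{ij})$ bounded, measurable, and uniformly elliptic -- the ellipticity following from $\mathcal{M}(\psi^{(i)})<1$ together with the two-sided bounds on $\partial_{z_2}\varphi^{(i)}$, and being uniform on each side of $z_2=m_d$ even though the coefficients merely jump there. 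No lower-order terms arise, since the flux functions carry no explicit dependence on $\varphi$.

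Finally I would run the energy argument of Lemma \ref{prop:6.1} verbatim: testing with $\eta_L^2\hat\varphi$ for a cut-off $\eta_L(z_1)$ and invoking the Caccioppoli inequality (which needs only bounded measurable uniformly elliptic coefficients, so the jump at $z_2=m_d$ is irrelevant) yields
\begin{equation*}
\int_{[-L,L]\times[0,m]}|\nabla_\bz\hat\varphi|^2\,\dd\bz\le C\int_{([-L-1,-L]\cup[L,L+1])\times[0,m]}|\nabla_\bz\hat\varphi|^2\,\dd\bz
\end{equation*}
with $C$ independent of $L$; since $\nabla_\bz\hat\varphi\in L^\infty$, this forces $\nabla_\bz\hat\varphi\in L^2(\R\times(0,m))$ and hence $\int_{\R\times[0,m]}|\nabla_\bz\hat\varphi|^2\,\dd\bz=0$. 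As $\hat\varphi$ vanishes on $z_2=0$, we get $\hat\varphi\equiv0$, i.e. $\psi^{(1)}\equiv\psi^{(2)}$, and the two weak solutions coincide.

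The main obstacle is the loss of the global nondegeneracy \eqref{ine:5.32}: one must re-prove, on each smooth component separately and up to the discontinuity, the two-sided bound $0<\delta_\pm\le\partial_{z_2}\varphi^{(i)}\le C_\pm$, so that $\varphi^{(i)}$ is Lipschitz on $\R\times[0,m]$ and the telescoping Caccioppoli estimate closes. Once this gradient bound is secured, the discontinuity itself is benign for the energy method, which is insensitive to the jump of the coefficients $(\hat{a}_{ij})$ across $z_2=m_d$.
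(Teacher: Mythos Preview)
Your overall strategy -- pass to the Lagrangian coordinates so that both discontinuities become the fixed line $z_2=m_d$, subtract the two weak formulations, and run an energy argument on $\hat\varphi=\varphi^{(1)}-\varphi^{(2)}$ -- is exactly the paper's route. The divergence occurs at the step you flag as ``the main obstacle'': you claim a two-sided positive lower bound $\partial_{x_2}\psi^{(i)}\ge\delta_\pm>0$ up to $\Gamma^{(i)}$, which would give $\nabla_\bz\varphi^{(i)}\in L^\infty$ and let the Caccioppoli argument of Lemma~\ref{prop:6.1} go through verbatim. The paper explicitly does \emph{not} do this (it opens the proof by noting that \eqref{ine:5.32} is lost), and your proposed justification for recovering it has a genuine gap.

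The difficulty is this. Equation \eqref{equ:5.18} for $\bar{\cu}=e^{-\alpha x_2}\partial_{x_2}\psi$ lives in the Euler coordinates, where $\Gamma$ is only Lipschitz (Lemma~\ref{lem:6.2xw}); Hopf's lemma therefore does not apply at $\Gamma$. Saying ``in the Lagrangian coordinates the boundary is flat'' does not resolve this: to carry \eqref{equ:5.18} to the $\bz$-variables you must multiply by the Jacobian \eqref{equ:3.2Jacob}, which is $\partial_{x_2}\psi$ itself, so the transformed coefficients are uncontrolled precisely where you need them. Nor can you argue via the $\varphi$-equation \eqref{equ:6.14} directly, since its ellipticity degenerates as $\partial_{z_2}\varphi\to\infty$, which is exactly the regime you are trying to exclude. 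In short, the arguments of \S\ref{sec:existence smooth}--\S\ref{sec:uniqueness} do not transplant to either side of $\Gamma$ without already knowing what you want to prove.

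The paper sidesteps this entirely. It observes that, while $\partial_{z_2}\varphi^{(i)}=1/(\rho^{(i)}u_1^{(i)})$ need not be bounded, one always has $\int_0^m|\partial_{z_2}\varphi^{(i)}(z_1,s)|\,\dd s\le\max(w_2-w_1)$, so $\varphi^{(i)}\in BV_{\rm loc}$ and $\hat\varphi\eta_L$ is a legitimate test function by density. The passage $L\to\infty$ is handled not by Caccioppoli (which would need uniform ellipticity) but by the far-field asymptotics of Lemma~\ref{lem:6.2x} and dominated convergence, yielding the energy identity \eqref{UUUUUU1} directly. The ellipticity bound then reads $\sum\hat a_{jk}\xi_j\xi_k\ge C\!\int_1^2[\rho(\partial_{z_2}\varphi^{(\tau)})^3]^{-1}\dd\tau\cdot|\xi|^2$, a weight that may degenerate but is strictly positive almost everywhere (since each $\partial_{z_2}\varphi^{(i)}$ is finite a.e.), which suffices to conclude $\nabla_\bz\hat\varphi=0$ a.e. Your proof becomes correct if you replace the unproven $L^\infty$ bound on $\partial_{z_2}\varphi^{(i)}$ by this $BV$/degenerate-ellipticity mechanism.
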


\begin{proof}
Let $\psi^{(i)}$, $i=1, 2$, be any two weak solutions.
For each $\psi^{(i)}$, we can introduce the corresponding Lagrangian coordinates to obtain
the corresponding potential function $\varphi^{(i)}$.
From \eqref{weakLequation} and the fact that $(u^{(i)}, p^{(i)})\in L^{\infty}$ in Lemma \ref{prop:9.1existence},
\begin{eqnarray*}
0&=&\int_{\mathbb{R}\times[0, m]} \big(u_2^{(i)}\partial_{z_1} \eta+ p^{(i)}\partial_{z_2}\eta\big)\, \dd \mathbf{z}\nonumber\\
  &=&\int_{\mathbb{R}\times[0, m_d]} \big(u_2^{(i)}\partial_{z_1} \eta+ p^{(i)}\partial_{z_2}\eta\big)\, \dd \mathbf{z}
        +\int_{\mathbb{R}\times[m_d, m]} \big(u_2^{(i)}\partial_{z_1} \eta+ p^{(i)}\partial_{z_2}\eta\big)\, \dd \mathbf{z}\nonumber\\
  &=&\int_{\mathbb{R}\times[0, m_d]}
    \Big(\frac{\partial_{z_1}\varphi^{(i)}}{\rho(\nabla_\bz\varphi^{(i)}, z_2)\partial_{z_2}\varphi^{(i)}}\partial_{z_1} \eta+ p(\nabla_\bz\varphi^{(i)}, z_2)\partial_{z_2}\eta\Big)\,\dd \mathbf{z} \\
&& +\int_{\mathbb{R}\times[m_d, m]}
   \Big(\frac{\partial_{z_1}\varphi^{(i)}}{\rho(\nabla_\bz\varphi^{(i)}, z_2)\partial_{z_2}\varphi^{(i)}}\partial_{z_1} \eta
     + p(\nabla_\bz\varphi^{(i)}, z_2)\partial_{z_2}\eta\Big)\, \dd\mathbf{z}\nonumber
\end{eqnarray*}
for $i=1$, $2$, and any smooth test function $\eta$.

Let $\hat{\varphi}=\varphi^{(1)}-\varphi^{(2)}$.
Therefore, we have
\begin{eqnarray}
0&=&\int_{\mathbb{R}\times[0, m]} \big((u_2^{(1)}-u_2^{(2)})\partial_{z_1} \eta+ (p^{(1)}-p^{(2)})\partial_{z_2}\eta\big)\, \dd \mathbf{z}\nonumber\\
  &=&\int_{\mathbb{R}\times[0, m_d]}
\big((\hat{a}_{11}\partial_{z_1}\hat{\varphi}+\hat{a}_{12}\partial_{z_2}\hat{\varphi})
\partial_{z_1} \eta+ (\hat{a}_{21}\partial_{z_1}\hat{\varphi}+\hat{a}_{22}\partial_{z_2}\hat{\varphi})\partial_{z_2}\eta\big)\, \dd \mathbf{z}\nonumber\\
   & &  + \int_{\mathbb{R}\times[m_d, m]}
\big((\hat{a}_{11}\partial_{z_1}\hat{\varphi}+\hat{a}_{12}\partial_{z_2}\hat{\varphi})
\partial_{z_1} \eta+ (\hat{a}_{21}\partial_{z_1}\hat{\varphi}+\hat{a}_{22}\partial_{z_2}\hat{\varphi})\partial_{z_2}\eta\big)\, \dd \mathbf{z} \label{6.22x}
\end{eqnarray}
with
\begin{eqnarray*}
&&\hat{a}_{11}=\int_1^2\Big(\frac{1}{\rho(\nabla_\bz\varphi^{(\tau)}, z_2)\partial_{z_2}\varphi^{(\tau)}}
   -\frac{(\partial_{z_1}\varphi^{(\tau)})^2}{\rho^3(\nabla_\bz\varphi^{(\tau)}, z_2)(\partial_{z_2}\varphi^{(\tau)})^3(q^2-c^2)}\Big)\,\dd \tau,\\[1mm]
&&\hat{a}_{12}=-\int_1^2\Big(\frac{\partial_{z_1}\varphi^{(\tau)}}{\rho(\nabla_\bz\varphi^{(\tau)}, z_2)(\partial_{z_2}\varphi^{(\tau)})^2}
  -\frac{\partial_{z_1}\varphi^{(\tau)}\big((\partial_{z_1}\varphi^{(\tau)})^2+1\big)}{\rho^3(\nabla_\bz\varphi^{(\tau)},z_2)(\partial_{z_2}\varphi^{(\tau)})^4(q^2-c^2)}\Big)\,\dd \tau,\\[1mm]
&& \hat{a}_{21}=\int_1^2\frac{c^2(\partial_{z_1}\varphi^{(\tau)})}{\rho(\nabla_{\bz}\varphi^{(\tau)}, z_2)(\partial_{z_2}\varphi^{(\tau)})^2(q^2-c^2)}\,\dd \tau,\\[1mm]
&& \hat{a}_{22}=-\int_1^2\frac{c^2\big((\partial_{z_1}\varphi^{(\tau)})^2+1\big)}{\rho(\nabla_{\bz}\varphi^{(\tau)}, z_2)(\partial_{z_2}\varphi^{(\tau)})^3(q^2-c^2)}\, \dd \tau,
\end{eqnarray*}
where
$\varphi^{(\tau)}=(2-\tau)\varphi^{(1)}+(\tau-1)\varphi^{(2)}$
for some $\tau\in(1,2)$.

\smallskip
Note that $|\frac{u_2}{u_1}|=|\tan\theta|\leq \tan\theta_B$ and $\frac{1}{\rho u_1}>0$.
Since
$\nabla_\bz\varphi=(\frac{u_2}{u_1}, \frac{1}{\rho u_1})$,
then
$|\partial_{z_1}\varphi|$ is bounded in $L^\infty$
and, on the $z_2$--direction,
\begin{align*}
\int_0^{z_2}|\partial_{z_2}\varphi(z_1, s)| \dd s
&=\int_0^{z_2}|\frac{1}{\rho u_1}(z_1, s)| \dd s
=\int_0^{z_2}\frac{1}{\rho u_1}(z_1, s)\dd s
= \int_0^{z_2}\partial_{z_2}\varphi(z_1, s) \dd s\\[2mm]
&=\varphi(z_1, z_2)-\varphi(z_1, 0)
\le \max_{x_1\in \R}\,(w_2-w_1)(x_1).
\end{align*}
This implies that  $\varphi\in BV_{\rm loc}$.

Let $\eta_L$ be a smooth cut-off function supported on $[-L,L]$ and identically equal to $1$ on $[-L+1,L-1]$.
Let $\eta=\hat{\varphi}\eta_L$.
Then \eqref{6.22x} holds for this $\eta$, since $C_{\rm c}^{\infty}$ is dense in $BV$ and
$(u^{(i)}, p^{(i)})\in L^{\infty}$.
Note that, for the $L^{\infty}$ functions $(u,v)$,
\begin{align*}
&\int_{\mathbb{R}\times[0, m]} \big(u\partial_{z_1} \eta+ v\partial_{z_2}\eta\big)\, \dd \mathbf{z}\\
&= \int_{\mathbb{R}\times[0, m]} \big(u\hat{\varphi}\partial_{z_1} \eta_L+ v\hat{\varphi}\partial_{z_2}\eta_L\big)\, \dd \mathbf{z}
        +\int_{\mathbb{R}\times[0, m]} \big(u\eta_L\partial_{z_1}\hat{\varphi} + v\eta_L\partial_{z_2}\hat{\varphi}\big)\, \dd \mathbf{z}.
\end{align*}
Therefore, we have
\begin{align}
&\int_{[-L+1,L-1]\times[0, m]} \big(u\partial_{z_1}\hat{\varphi} + v\partial_{z_2}\hat{\varphi}\big)\, \dd \mathbf{z}\nonumber\\
&= \int_{\mathbb{R}\times[0, m]} \big(u\partial_{z_1} \eta+ v\partial_{z_2}\eta\big)\, \dd \mathbf{z}
-\int_{([-L, -L+1]\cup[L-1,L])\times[0, m]} \big(u\hat{\varphi}\partial_{z_1} \eta_L+ v\hat{\varphi}\partial_{z_2}\eta_L\big)\, \dd \mathbf{z}\nonumber\\
&\quad -\int_{([-L, -L+1]\cup[L-1,L])\times[0, m]} \big(u\eta_L\partial_{z_1}\hat{\varphi} + v\eta_L\partial_{z_2}\hat{\varphi}\big)\, \dd \mathbf{z}. \label{weakLx}
\end{align}

By \eqref{equ:2.59a}--\eqref{equ:2.60a} in Lemma \ref{lem:6.2x},
the fact that $(u, v)\in L^{\infty}$ and $\hat{\varphi}\in BV$ when $L$ is sufficiently large in Lemma \ref{prop:9.1existence},
and the dominant convergence theorem, passing the limit $L\rightarrow\infty$, \eqref{weakLx} becomes
\begin{eqnarray}
\lim_{L\rightarrow\infty}\int_{\mathbb{R}\times[0, m]}
\big(u\partial_{z_1} \eta+ v\partial_{z_2}\eta\big)\, \dd \mathbf{z}
&=&\int_{\mathbb{R}\times[0, m]} \big(u\partial_{z_1} \hat{\varphi}+ v\partial_{z_2}\hat{\varphi}\big)\, \dd \mathbf{z}.\label{weakLa}
\end{eqnarray}

Plugging \eqref{weakLa} into \eqref{6.22x}, we have
\begin{equation}\label{UUUUUU1}
\int_{\mathbb{R}\times[0, m]}
\big(
(\hat{a}_{11}\partial_{z_1}\hat{\varphi}+\hat{a}_{12}\partial_{z_2}\hat{\varphi})
\partial_{z_1} \hat{\varphi}+ (\hat{a}_{21}\partial_{z_1}\hat{\varphi}+\hat{a}_{22}\partial_{z_2}\hat{\varphi})\partial_{z_2}\hat{\varphi}\big)\, \dd \mathbf{z}=0.
\end{equation}

By the ellipticity, we know that, for any $\xxi\in\mathbb{R}^2\backslash\{\mathbf{0}\}$, there exist positive constants $C$ and $C'$ such that
$$
0 \leq C\int_1^2 \frac{\dd\tau}{\rho(\nabla_\bz\varphi^{(\tau)}, z_2)\big(\partial_{z_2}\varphi^{(\tau)}\big)^3}\, |\xxi|^2
\leq\sum_{j,k=1}^2 \hat{a}_{jk}\xi_j\xi_k
\leq C'\int_1^2 \frac{\dd\tau}{\rho(\nabla_\bz\varphi^{(\tau)}, z_2)\partial_{z_2}\varphi^{(\tau)}}\, |\xxi|^2.
$$
Then \eqref{UUUUUU1} becomes
\begin{equation}
\int_{\mathbb{R}\times[0, m]}
\Big(\int_1^2\frac{\dd\tau}{\rho(\nabla_\bz\varphi^{(\tau)}, z_2)\big(\partial_{z_2}\varphi^{(\tau)}\big)^3}\, |\nabla_{\mathbf{z}} \hat{\varphi}|^2\Big)
\,\dd \mathbf{z}=0.
\end{equation}
	
On the other hand,
$\int_1^2 \frac{\dd\tau}{\rho(\nabla_\bz\varphi^{(\tau)}, z_2)\big(\partial_{z_2}\varphi^{(\tau)}\big)^3}=0$
if and only if $\partial_{z_2}\hat{\varphi}=0$.
Therefore, we conclude that  $\varphi^{(1)}=\varphi^{(2)}$.
\end{proof}

\medskip
Now, we can complete the proof of Theorem \ref{thm:discontinuity}.

\begin{proof}[Proof of Theorem {\rm \ref{thm:discontinuity}}]
We employ a similar argument to the Bers skill in \cite{Bers2}.

Before the limiting process,  we introduce an approximate problem
in the case that $(u_-, S_-)$ have the jump point $x_2=x_d$,
so that \textbf{Problem 5.1($m$, $\varepsilon$)} satisfies the inlet conditions, since
$(u_-^{\varepsilon},S_-^{\varepsilon})$ satisfy the assumptions of Theorem \ref{thm:smooth}, which
converge to $(u_-, S_-)$ pointwise in $[0,1]$ and $C^{1,1}$ away from the jump point $x_2=x_d$.

Let $\{\epsilon_i\}_{i=1}^{\infty}$ be a strictly decreasing sequence such that $\epsilon_i\rightarrow0$ as $i\rightarrow\infty$.
By the construction of the solution, there exists a maximal interval $(\bar{m}_i, \infty)$ such that, for $m\in(\bar{m}_i,\infty)$,
$$
\mathcal{M}(\psi(x; m))<-2\epsilon_i.
$$
For each fixed $\varepsilon>0$, $(\bar{m}_i^\varepsilon, \infty)$ is the maximal interval for $m\in(\bar{m}_i^\varepsilon,\infty)$ such that
$$
\mathcal{M}(\psi^\varepsilon(x; m))<-2\epsilon_i,
$$
where $\psi^\varepsilon(x; m)$ is the solution of \textbf{Problem 5.1($m$, $\varepsilon$)}.
Also $\bar{m}_i=\varliminf_{\varepsilon\rightarrow0} \bar{m}_i^\varepsilon$,

By property of the lower limit, we can see that $\bar{m}_j\ge\bar{m}_i$ for $j>i$.
Thus, $\{\bar{m}_i\}_{i=1}^{\infty}$ is a nonnegative decreasing sequence,
which implies its convergence.
As a consequence, we have
$$
m_c:=\lim_{i\rightarrow\infty}\bar{m}_i=\lim_{i\rightarrow\infty}\varliminf_{\varepsilon\rightarrow0} \bar{m}_i^\varepsilon
=\varliminf_{\varepsilon\rightarrow0} \lim_{i\rightarrow\infty} \bar{m}_i^\varepsilon=\varliminf_{\varepsilon\rightarrow0}m_{\rm c}^\varepsilon,
$$
where $m_{\rm c}^\varepsilon$ is the critical mass of \textbf{Problem 5.1($m$, $\varepsilon$)}.

If $\bar{m}_i>m_{\rm c}$ for any $i$,
then, for any $m\in (m_{\rm c},\infty)$,
there exist an index $i$ such that $\bar{m}_{i}<m\le\bar{m}_{i-1}$.
The solution of \textbf{Problem 3.2($m$)} is $\psi(x; m)$ with the estimate:
$$
-2\epsilon_{i-1}\le\mathcal{M}(\psi(x; m))<-2\epsilon_i.
$$
Then $\mathcal{M}(\psi(x; m))\rightarrow 0$ as $m\rightarrow m_{\rm c}$.

On the other hand, if $\bar{m}_i\equiv m_c$ when $i$ is large enough, then, by construction,
there does not exist $\sigma>0$ such that, for $\varepsilon_0$ small enough,
\textbf{Problem 5.1($m$, $\varepsilon$)} has solutions
for all $0<\varepsilon\leq \varepsilon_0$ and  $m\in(m_{\rm c}-\sigma,m_{\rm c})$
so that $\mathcal{M}(\psi^\varepsilon(x; m))<0$.
This means that some structures like shock bubbles may appear before the subsonic flows become subsonic-sonic flows.
\end{proof}

\section{Subsonic-Sonic Limits and Incompressible Limits}\label{section:7b}
In this section, we study the subsonic-sonic limit and the incompressible limit of the solutions
obtained in Theorems \ref{thm:smooth}--\ref{thm:discontinuity}.
Since the arguments for the two cases are similar,
we consider only the harder case that the sequence of solutions
is obtained via Theorem \ref{thm:discontinuity}, which allows discontinuities.

\subsection{Subsonic-sonic limits}
We first show the subsonic-sonic limit.

\begin{theorem}\label{thm5.2}
Let $m^{(\v)}>\tilde{m}_{\rm c}$ be a sequence of mass fluxes,
and let $(\rho^{(\v)}, \bu^{(\v)}, p^{(\v)})(\bx)$
be the corresponding sequence of solutions of  {\rm \bf Problem 2.1($m^{(\v)}$)}.
Then, as $m^{(\v)}\rightarrow \tilde{m}_{\rm c}$, the solution
sequence possesses a subsequence {\rm (}still denoted by{\rm )}
$(\rho^{(\v)}, \bu^{(\v)}, p^{(\v)})(\bx)$ that converges  to
a vector function $(\rho, \bu, p)(\bx)$ strongly {\it a.e.} in $\Omega$.
Furthermore, the limit function $(\rho, \bu, p)(\bx)$ satisfies $(\ref{1.5})$
in the distributional sense and the boundary condition \eqref{cdx-sc}
on $\partial \Omega$ as the normal trace of the divergence-measure field
in the sense of Chen-Frid {\rm \cite{Chen7}}.
\end{theorem}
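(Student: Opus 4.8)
The plan is to combine the uniform \emph{a priori} bounds already obtained for solutions of {\bf Problem 2.1($m$)} with the compensated compactness framework of Chen--Huang--Wang \cite{ChenHuangWang}, and then to pass to the limit in the weak formulation \eqref{weaksolution}. Since the sequence under consideration is the harder one produced by Theorem \ref{thm:discontinuity}, the inlet data $(u_{1-},S_-)$ may carry finitely many jumps, so $(\mathbb{B}^{(\v)})'$ and $(\mathbb{S}^{(\v)})'$ are Radon measures rather than $L^\infty$ functions; this is the feature that dictates which compactness tool is available.

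First I would record the uniform estimates. Since $m^{(\v)}\to\tilde m_{\rm c}$, the sequence $\{m^{(\v)}\}$ is bounded, so the inlet pressure $p_-^{(\v)}$ from \eqref{3.5}, and hence $B_-^{(\v)}$ and the boundary values of $p^{(\v)}$, are uniformly bounded and bounded away from zero. By the maximum principle \eqref{2.61b} (Lemma \ref{lem:5.4qmax}), $p^{(\v)}$ is uniformly bounded above and below; by the Bernoulli relation \eqref{E:2.22}, $\rho^{(\v)}$ is uniformly bounded and nondegenerate and $q^{(\v)}_{\rm cr}$ is uniformly bounded, so the subsonic constraint $|\bu^{(\v)}|^2\le (c^{(\v)})^2$ a.e.\ yields a uniform bound on $q^{(\v)}$. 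Hence $(\rho^{(\v)},\bu^{(\v)},p^{(\v)})$ and all the nonlinear quantities $\rho^{(\v)}u_i^{(\v)}$, $\rho^{(\v)}u_i^{(\v)}u_j^{(\v)}$ are uniformly bounded in $L^\infty(\Omega)$. Moreover, exactly as in Lemma \ref{prop:9.1existence}, $B^{(\v)}=\mathbb{B}^{(\v)}(\psi^{(\v)})$ and $S^{(\v)}=\mathbb{S}^{(\v)}(\psi^{(\v)})$ are uniformly bounded in $BV(\Omega)$, since $\nabla B^{(\v)}=(\mathbb{B}^{(\v)})'\rho^{(\v)}(-u_2^{(\v)},u_1^{(\v)})$ with $(\mathbb{B}^{(\v)})',(\mathbb{S}^{(\v)})'$ bounded in the space of Radon measures uniformly in $\v$; hence $B^{(\v)},S^{(\v)}$ converge strongly \emph{a.e.}\ to $B,S$ along a subsequence.

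Next I would run the compensated compactness argument. The mass equation gives $\mathrm{div}(\rho^{(\v)}\bu^{(\v)})=0$, and the vorticity
\[
\omega^{(\v)}=\partial_{x_1}u_2^{(\v)}-\partial_{x_2}u_1^{(\v)}=-\rho^{(\v)}(\mathbb{B}^{(\v)})'(\psi^{(\v)})+\frac{(\rho^{(\v)})^\gamma}{\gamma}(\mathbb{S}^{(\v)})'(\psi^{(\v)})
\]
is uniformly bounded in the space of Radon measures, hence compact in $H^{-1}_{\rm loc}(\Omega)$ by Murat's lemma. Together with the uniform $L^\infty$ bounds, the $BV$ compactness of $(B^{(\v)},S^{(\v)})$, and the pointwise subsonic constraint $|\bu^{(\v)}|^2\le (c^{(\v)})^2$ a.e.\ (the limiting flow being allowed to reach the sonic state), the hypotheses of Theorem 2.1 in \cite{ChenHuangWang} are met, so a further subsequence of $(\rho^{(\v)},\bu^{(\v)},p^{(\v)})$ converges \emph{a.e.}\ in $\Omega$ to a vector function $(\rho,\bu,p)$. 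With this \emph{a.e.}\ convergence and the uniform $L^\infty$ bounds, the dominated convergence theorem lets me pass to the limit in \eqref{weaksolution}: $\rho^{(\v)}u_i^{(\v)}\to\rho u_i$, $\rho^{(\v)}u_i^{(\v)}u_j^{(\v)}+\delta_{ij}p^{(\v)}\to\rho u_iu_j+\delta_{ij}p$, and, using $B^{(\v)}\to B$, also $\rho^{(\v)}u_i^{(\v)}B^{(\v)}\to\rho u_iB$, all in $L^p_{\rm loc}$ for $1\le p<\i$. Recalling $\rho\bu E+p\bu=\rho\bu B$, this shows $(\rho,\bu,p)$ satisfies \eqref{1.5} in the distributional sense. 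For the boundary condition, each $\rho^{(\v)}\bu^{(\v)}$ is a bounded divergence-free field with $(\rho^{(\v)}\bu^{(\v)})\cdot\nnu=0$ on $\partial\Omega$ as a Chen--Frid normal trace; since $\rho\bu$ is again a bounded divergence-free field and $\rho^{(\v)}\bu^{(\v)}\to\rho\bu$ \emph{a.e.}\ with (vanishing, hence uniformly convergent) divergences, the normal traces converge and $(\rho\bu)\cdot\nnu=0$ on $\partial\Omega$ in the same sense.

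The main obstacle is the compensated compactness step. As $M^{(\v)}\uparrow 1$ the equations degenerate from elliptic to locally hyperbolic, so no uniform elliptic estimate is available; one must verify that the entropy-pair analysis and the Tartar commutator relation in \cite{ChenHuangWang}, and especially the reduction of the associated Young measure to a Dirac mass, remain valid uniformly up to the sonic state. The needed $H^{-1}_{\rm loc}$-compactness of $\omega^{(\v)}$ ultimately rests on the uniform Radon-measure bound for $((\mathbb{B}^{(\v)})',(\mathbb{S}^{(\v)})')$, which follows from the uniform $BV$ bound on the inlet data, uniform in $\v$ and in $m^{(\v)}$ near $\tilde m_{\rm c}$.
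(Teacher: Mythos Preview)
Your proposal is correct and follows essentially the same route as the paper: verify the hypotheses of the Chen--Huang--Wang compensated compactness framework (uniform $L^\infty$ bounds, $BV$ compactness of $B^{(\v)},S^{(\v)}$, and $H^{-1}_{\rm loc}$-compactness of the vorticity via its uniform bound as a Radon measure), extract an \emph{a.e.}\ convergent subsequence, and pass to the limit in the weak formulation and in the Chen--Frid normal trace. The only adjustment is that the relevant result in \cite{ChenHuangWang} is the subsonic-sonic theorem (Theorem 2.4 there, as the paper cites), not Theorem 2.1, since here $M^{(\v)}$ is only known to satisfy $M^{(\v)}\le 1$ in the limit rather than a uniform strict bound $M^{(\v)}<1-\delta$; you already flag this distinction in your final paragraph.
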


\begin{proof}
We divide the proof into three steps.

\smallskip
1. First, we recall the compactness condition in \cite{ChenHuangWang}:

\medskip
(A.1). $M^{(\v)}(\bx)\leq 1$  {\it a.e.} $\bx\in \Omega$;

\vspace{1mm}

(A.2). $(B^{(\v)}, S^{(\v)})$ are uniformly bounded and,
for any compact set $K$, there exists a uniform constant $c(K)$ such
that $\inf\limits_{\bx\in K} S^{(\v)}(\bx)\ge c(K)>0$.
Moreover,  $(B^{(\v)}, S^{(\v)})(\bx)\to (\overline{B}, \overline{S})$
{\it a.e.} $\bx\in \Omega$;

\vspace{1mm}
(A.3). $\mbox{curl}\  \bu^{(\v)}$ and $\mbox{div}(\rho^{(\v)} \bu^{(\v)})$ are in a compact set
in $W_{loc}^{-1, p}$ for some $1<p \le 2$.

\medskip
Next, we show that $(\rho^{(\v)}, \bu^{(\v)}, p^{(\v)})(\bx)$
satisfy conditions (A.1)--(A.3).

For $(B^{(\v)}, S^{(\v)})$, we have
\begin{eqnarray}\label{3.2.1}
\begin{cases}
\partial_{x_1}(\rho^{(\v)} u_1^{(\v)})+\partial_{x_2}(\rho^{(\v)} u_2^{(\v)})=0,\\[2mm]
\partial_{x_1}(\rho^{(\v)} u_1^{(\v)} B^{(\v)})+\partial_{x_2}(\rho^{(\v)} u_2^{(\v)} B^{(\v)})=0,\\[2mm]
\partial_{x_1}(\rho^{(\v)} u_1^{(\v)} S^{(\v)})+\partial_{x_2}(\rho^{(\v)} u_2^{(\v)} S^{(\v)})=0.
\end{cases}
\end{eqnarray}

From $(\ref{3.2.1})_1$, we introduce the following stream function $\psi^{(\v)}$:
\begin{eqnarray*}
\nabla_{\bx}\psi^{(\v)}=\rho^{(\v)} (-u_2^{(\v)}, u_1^{(\v)}),
\end{eqnarray*}
which means that $\psi^{(\v)}$ is constant along the streamlines.

From the far-field behavior of the Euler flows,
we define
$$
\psi^{(\v)}_-(x_2):=\lim\limits_{x_1\rightarrow-\infty}\psi^{(\v)}(x_1, x_2).
$$
Since both the upstream Bernoulli function $B_-$ and entropy function $S_-$ are given,
$(B^{(\v)}, S^{(\v)})$ have the following expression:
$$
B^{(\v)}(\bx)=B_{-}((\psi^{(\v)}_{-})^{-1}(\psi^{(\v)}(\bx))),
\qquad S^{(\v)}(\bx)=S_{-}((\psi^{(\v)}_{-})^{-1}(\psi^{(\v)}(\bx))),
$$
where $(\psi^{(\v)}_{-})^{-1}\psi^{(\v)}(\bx)$ is a function
from $\Omega$ to $[0,1]$, which
can be regarded as a backward characteristic map for fixed $x_1$ with
$$
\frac{\partial ((\psi^{(\v)}_{-})^{-1}\psi^{(\v)})}{\partial x_2}
(\bx)=\frac{\rho^{(\v)} u_1^{(\v)}}{\rho^{(\v)}_-u^{(\v)}_{1-}}(\bx)>0
\qquad  a.e.\,\, \bx\in \Omega.
$$
The boundedness and positivity of $\rho^{(\v)}_-u_{1-}^{(\v)}$
and  $\rho^{(\v)} u_1^{(\v)}$ show that
the map is not degenerate.
Then we have
\begin{eqnarray}
\begin{cases}
\partial_{x_1}B^{(\v)}(\bx)=-B'_{-}((\psi^{(\v)}_{-})^{-1}(\psi^{(\v)}(\bx)))\frac{\rho^{(\v)} u^{(\v)}_2}{\rho^{(\v)}_-u^{(\v)}_{1-}}(\bx),\\[2mm]
\partial_{x_2}B^{(\v)}(\bx)=B'_{-}((\psi^{(\v)}_{-})^{-1}(\psi^{(\v)}(\bx)))\frac{\rho^{(\v)} u^{(\v)}_1}{\rho^{(\v)}_-u^{(\v)}_{1-}}(\bx).
\end{cases}
\end{eqnarray}
Thus, $B^{(\v)}$ is uniformly bounded in $BV$, which implies its strong convergence.
The similar argument can lead to the strong convergence of $S^{(\v)}$.

\medskip
2. For the corresponding vorticity sequence $\omega^{(\v)}$,  we have
\begin{eqnarray}
\begin{cases}
\partial_{x_1} B^{(\v)}=  u_2^{(\v)} \omega^{(\v)}+\frac{1}{\gamma-1}(\rho^{(\v)})^{\gamma-1}\partial_{x_1} S^{(\v)},\\[2mm]
\partial_{x_2} B^{(\v)}=  - u_1^{(\v)} \omega^{(\v)}+\frac{1}{\gamma-1}(\rho^{(\v)})^{\gamma-1}\partial_{x_2} S^{(\v)}.
\end{cases}
\end{eqnarray}
By a direct calculation, we have
\begin{eqnarray}
\omega^{(\v)}&=&\frac{1}{(q^{(\v)})^2}
\Big(u_2^{(\v)}\big(\partial_{x_1} B^{(\v)}-\frac{1}{\gamma-1}(\rho^{(\v)})^{\gamma-1} \partial_{x_1} S^{(\v)}\big)
-u_1^{(\v)}\big(\partial_{x_2} B^{(\v)}-\frac{1}{\gamma-1}(\rho^{(\v)})^{\gamma-1}\partial_{x_2} S^{(\v)}\big)\Big)\nonumber\\
&=&\frac{1}{\rho_-^{(\v)} u_{1-}^{(\v)}}\Big(-\rho^{(\v)} B_-'+\frac{1}{\gamma-1}(\rho^{(\v)})^\gamma S_-'\Big),
\end{eqnarray}
which implies that $\omega^{(\v)}$ as a measure sequence is uniformly bounded and compact in $H^{-1}_{loc}$.

\smallskip
Then Theorem 2.4 in \cite{ChenHuangWang}
implies that
the solution sequence has a subsequence (still denoted
by) $(\rho^{(\v)}, \bu^{(\v)}, p^{(\v)})(\bx)$
that converges  to
a vector function $(\rho, \bu, p)(\bx)$ {\it a.e.} $\bx\in \Omega$.

\smallskip
Since
\eqref{1.5} holds for the sequence of subsonic solutions
$(\rho^{(\v)}, \bu^{(\v)}, p^{(\v)})(\bx)$,
then
the limit vector function $(\rho, \bu, p)(\bx)$ also
satisfies \eqref{1.5} in the distributional sense.

\medskip
3. The boundary condition is satisfied
in the sense of Chen-Frid \cite{Chen7}, which  implies
\begin{equation*}
\int_{\partial \Omega}\phi(\bx)(\rho \bu)(\bx)\cdot {\boldsymbol \nu} (\bx)\, \dd\mathcal{H}^{1}(\bx)
= \int_{\Omega} (\rho \bu)(\bx) \cdot \nabla \phi(\bx)\, \dd\bx + \langle \mbox{div}(\rho \bu)|_{\Omega}, \phi\rangle
\qquad \mbox{for $\psi\in C^1_0$}.
\end{equation*}
From above, we can see that $\langle \mbox{div}(\rho \bu)|_{\Omega}, \phi\rangle=0$. Moreover, we have
\begin{equation*}
\int_{\Omega}(\rho \bu)(\bx) \cdot \nabla \phi(\bx)\, \dd\bx
=\lim\limits_{\varepsilon \rightarrow 0}\int_{\Omega}(\rho^\varepsilon \bu^\varepsilon)(\bx) \cdot \nabla \phi(\bx)\, \dd\bx=0.
\end{equation*}
Then we have
\begin{equation*}
\int_{\partial \Omega}\phi(\bx)(\rho \bu)(\bx)\cdot{\boldsymbol \nu}(\bx)\, \dd\mathcal{H}^{1}(\bx)=0,
\end{equation*}
that is, $(\rho \bu)\cdot {\boldsymbol \nu} = 0$ on $\partial \Omega$ in $\mathcal{D}'$.
\end{proof}

\subsection{Incompressible limits}
Introduce the following quantity which is uniform with respect to the limit $\gamma\rightarrow\infty$:
$$
G := \rho p^{-\frac{1}{\gamma}}\ge 0.
$$
Then $S=\frac{\gamma}{\gamma-1}G^{-\gamma}$.
Similar to {\bf Problem 2.1($m$)}, we introduce

\medskip
\textbf{Problem 7.1($m$, $\gamma$)}:  For given $(u_{1-}, G_-)(x_2)$,
find $(\rho, \bu, p)$ that satisfies \eqref{1.5}, with the condition \eqref{cdx-6}, \eqref{cdx-11}, and
the upstream conditions:
\begin{equation}\label{cd-8}
\rho p^{-\frac{1}{\gamma}}\longrightarrow G_-(x_2)
\qquad \mbox{as} ~x_1\rightarrow -\infty.
\end{equation}

Now we have the following theorem.
\begin{theorem}\label{thm5.3}
For any $\bar{m}>0$, there exists a convergence sequence $\{m^{(\gamma)}\}$ with $\lim_{\gamma\rightarrow\infty}m^{(\gamma)}=\bar{m}$
such that $(\rho^{(\gamma)}, \bu^{(\gamma)}, p^{(\gamma)})(\bx)$ is the solution of {\bf Problem 7.1($m^{(\gamma)}, \gamma$)}.
Then, as $\gamma\rightarrow \infty$, the solution
sequence possesses a subsequence {\rm (}still denoted by{\rm )}
$(\rho^{(\gamma)}, \bu^{(\gamma)}, p^{(\gamma)})(\bx)$
that converges  to
a vector function $(\bar{\rho}, \bar{\bu}, \bar{p})(\bx)$ strongly {\it a.e.}  $\bx\in \Omega$,
which is a weak solution of 	
	\begin{eqnarray}\label{ICIHE}\label{ICEuler}
	\begin{cases}
	\mbox{\rm div}\,\bar{\bu}=0,\\[1mm]
	\mbox{\rm div}(\bar{\rho}\bar{\bu})=0,\\[1mm]
	\mbox{\rm div}(\bar{\rho}\bar{\bu}\otimes\bar{\bu})+\nabla\bar{p}=0.
	\end{cases}
	\end{eqnarray}
Furthermore, the limit solution $(\bar{\rho}, \bar{\bu}, \bar{p})(\bx)$ also satisfies
the boundary condition $\bu\cdot {\boldsymbol \nu} = 0$ as the normal trace of the divergence-measure
field $\bu$ on the boundary in the sense of Chen-Frid {\rm \cite{Chen7}}.
\end{theorem}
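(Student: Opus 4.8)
The plan is to follow the same strategy as in the proof of Theorem \ref{thm5.2} for the subsonic-sonic limit, replacing the parameter $\v \to 0$ by $\gamma \to \infty$ and checking that the compensated compactness framework of Chen-Huang-Wang \cite{ChenHuangWang} still applies with estimates that are uniform in $\gamma$. First I would fix $\bar m>0$ and, for each large $\gamma$, invoke Theorem \ref{thm:discontinuity} (or Theorem \ref{thm:smooth} in the smooth case) with incoming data $(u_{1-}, G_-)$ rewritten via $S_-^{(\gamma)} = \frac{\gamma}{\gamma-1}G_-^{-\gamma}$, obtaining a critical mass $m_{\rm c}^{(\gamma)}$ and a solution for $m > m_{\rm c}^{(\gamma)}$. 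Since $m_{\rm c}^{(\gamma)}$ depends only on the data, one first checks $\limsup_{\gamma\to\infty} m_{\rm c}^{(\gamma)} \le \bar m$ (or picks $m^{(\gamma)} \in (m_{\rm c}^{(\gamma)}, m_{\rm c}^{(\gamma)} + 1/\gamma)$ with $m^{(\gamma)} \to \bar m$, using the Bers-type selection argument from the proof of Theorem \ref{thm:discontinuity}); this gives the claimed convergent sequence $\{m^{(\gamma)}\}$ together with solutions $(\rho^{(\gamma)}, \bu^{(\gamma)}, p^{(\gamma)})$.

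Next I would verify the three compactness hypotheses (A.1)--(A.3) of \cite{ChenHuangWang} with $\gamma$-uniform bounds. For (A.1), the subsonic condition $M^{(\gamma)} \le 1$ holds a.e. by construction. For (A.2), since $G_-$ is given and $B^{(\gamma)}$, $G^{(\gamma)}$ are constant along streamlines, the stream-function representation $B^{(\gamma)}(\bx) = B_-((\psi^{(\gamma)}_-)^{-1}(\psi^{(\gamma)}(\bx)))$ together with the non-degeneracy $\partial_{x_2}\psi^{(\gamma)} = \rho^{(\gamma)} u_1^{(\gamma)} > 0$ gives uniform $BV$ bounds; here I must be careful to phrase the entropy/Bernoulli bounds in terms of $G$ rather than $S$, since $S^{(\gamma)} \to \infty$ pointwise, whereas $G^{(\gamma)}$, $p^{(\gamma)}$, $\rho^{(\gamma)}$, and $\bu^{(\gamma)}$ stay uniformly bounded and bounded below away from vacuum by the estimates \eqref{2.61b}, \eqref{equ:2.61a}, Corollary \ref{cor:5.10speed}, and Lemma \ref{lem:5.4qmax}. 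For (A.3), $\mathrm{curl}\,\bu^{(\gamma)} = \omega^{(\gamma)}$ is a uniformly bounded measure hence compact in $H^{-1}_{\rm loc}$, and $\mathrm{div}(\rho^{(\gamma)}\bu^{(\gamma)}) = 0$. Applying the relevant theorem of \cite{ChenHuangWang} then yields a.e. convergence $(\rho^{(\gamma)}, \bu^{(\gamma)}, p^{(\gamma)}) \to (\bar\rho, \bar\bu, \bar p)$ along a subsequence.

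To identify the limit system \eqref{ICEuler}, I would pass to the limit in the weak formulation \eqref{weaksolution}: the second and third (momentum) equations of \eqref{1.5} converge directly to $\mathrm{div}(\bar\rho\bar\bu\otimes\bar\bu) + \nabla\bar p = 0$, and the first (mass) equation gives $\mathrm{div}(\bar\rho\bar\bu) = 0$. For $\mathrm{div}\,\bar\bu = 0$, I would use the energy equation $\mathrm{div}(\rho^{(\gamma)}\bu^{(\gamma)} E^{(\gamma)} + p^{(\gamma)}\bu^{(\gamma)}) = 0$ combined with mass conservation to derive $\mathrm{div}\big(\rho^{(\gamma)}\bu^{(\gamma)}\,\frac{\gamma}{\gamma-1}\,\frac{p^{(\gamma)}}{\rho^{(\gamma)}}\big) = \mathrm{div}(\rho^{(\gamma)}\bu^{(\gamma)} S^{(\gamma)}(\rho^{(\gamma)})^{\gamma-1}\cdot\text{(const)})$, more cleanly $\mathrm{div}(\rho^{(\gamma)}\bu^{(\gamma)} G^{(\gamma)}) = 0$ in an appropriate normalization, so that in the incompressible limit $G^{(\gamma)} \to \bar G$ with $\bar\rho = \bar G \bar p^{0} = \bar G$ forces $\bar\rho$ constant along streamlines and, together with $\mathrm{div}(\bar\rho\bar\bu) = 0$, yields $\mathrm{div}\,\bar\bu = 0$; the cleanest route is to observe that $\frac{1}{\gamma}\log p^{(\gamma)} \to 0$ so $\rho^{(\gamma)} = G^{(\gamma)}(p^{(\gamma)})^{1/\gamma} \to \bar G$ along each streamline. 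Finally, the boundary condition $\bar\bu\cdot\nnu = 0$ on $\partial\Omega$ in the normal-trace sense of Chen-Frid \cite{Chen7} follows exactly as in Step 3 of the proof of Theorem \ref{thm5.2}: since $\mathrm{div}\,\bar\bu = 0$ in $\Omega$, and $\int_\Omega \bar\bu\cdot\nabla\phi\,\dd\bx = \lim_\gamma \int_\Omega \bu^{(\gamma)}\cdot\nabla\phi\,\dd\bx$, which vanishes because $\mathrm{div}\,\bu^{(\gamma)}$ is itself controlled (via $G^{(\gamma)}$ and the same normalization) so that $\bu^{(\gamma)}$ converges to a divergence-measure field with zero normal trace. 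The main obstacle I anticipate is precisely the derivation of $\mathrm{div}\,\bar\bu = 0$: one must extract the incompressibility from the energy equation with constants blowing up like $\frac{\gamma}{\gamma-1}$ and $S^{(\gamma)} \sim \gamma$, so the bookkeeping must be organized entirely through the $\gamma$-uniform quantity $G$ and one must justify that $(p^{(\gamma)})^{1/\gamma} \to 1$ uniformly on compact sets using the uniform two-sided bounds on $p^{(\gamma)}$ from \eqref{2.61b}.
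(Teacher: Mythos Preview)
Your overall strategy is correct and matches the paper's, but two concrete points deserve attention.

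First, the compensated compactness framework you plan to invoke is not quite the right one. The hypotheses (A.1)--(A.3) you cite are those of \cite{ChenHuangWang}, and (A.2) asks for uniform two-sided bounds on $S^{(\gamma)}$; as you yourself note, $S^{(\gamma)}=\frac{\gamma}{\gamma-1}G_-^{-\gamma}$ blows up, so that theorem does not apply as stated. The paper instead invokes Theorem~2.2 of Chen--Huang--Wang--Xiang \cite{ChenHuangWangXiang}, a companion framework written specifically for the incompressible limit, whose hypotheses are formulated in terms of the $\gamma$-stable quantity $G=\rho p^{-1/\gamma}$ together with the uniform pressure bounds
\[
\Big(\tfrac{2(\gamma-1)}{\gamma(\gamma+1)}\min(\mathbb{B}^{(\gamma)}\mathbb{G}^{(\gamma)})\Big)^{\frac{\gamma}{\gamma-1}}
< p^{(\gamma)} \le
\Big(\tfrac{\gamma-1}{\gamma}\max(\mathbb{B}^{(\gamma)}\mathbb{G}^{(\gamma)})\Big)^{\frac{\gamma}{\gamma-1}}.
\]
Your instinct to ``rephrase everything through $G$'' is exactly right, and your proposed derivation of $\mathrm{div}\,\bar\bu=0$ via $(p^{(\gamma)})^{1/\gamma}\to 1$ and $\bar\rho=\bar G$ constant along streamlines is a valid hands-on substitute for what \cite{ChenHuangWangXiang} packages abstractly; but you should be aware that you are reproving that framework rather than citing \cite{ChenHuangWang}.

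Second, your construction of the sequence $m^{(\gamma)}\to\bar m$ is more elaborate than necessary. The paper observes directly from estimates \eqref{equ:4.10a}--\eqref{equ:4.11a} that $m_{\rm c}^{(\gamma)}\to 0$ as $\gamma\to\infty$ (the subsonic threshold $\hat Q(\psi)\ge C^{-1}m^{(\gamma+1)/2}$ dominates the gradient bound $Cm^{(\gamma-1)/2}$ for any fixed $m>0$ once $\gamma$ is large). Hence for any $\bar m>0$ one may simply take $m^{(\gamma)}=\bar m$ for all large $\gamma$, with no Bers-type selection needed. This is also why the remark following the theorem emphasizes that $\bar m$ is arbitrary.
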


\begin{remark}
In Theorem {\rm \ref{thm5.3}}, $\bar{m}$ can be any arbitrary positive constant.
\end{remark}

\begin{proof}
For any fixed $\gamma>1$, denote $m^{(\gamma)}_{\rm c}$ as the critical mass of {\bf Problem 7.1($m^{(\gamma)}, \gamma$)}.
From estimates \eqref{equ:4.10a}--\eqref{equ:4.11a},
we see that $m_{\rm c}^{(\gamma)}\rightarrow0$ as $\gamma\rightarrow\infty$.
By Theorem \ref{thm:smooth},
for any given $\gamma>1$,
there exists $\underline{m}^{(\gamma)}$ such that, when $m>\underline{m}^{(\gamma)}$,
there exists a unique $\psi^{(\gamma)}$ satisfying
	\begin{eqnarray*}
	\begin{cases}
	\partial_{x_1}\psi^{(\gamma)}=-\rho^{(\gamma)} u_2^{(\gamma)},\\[1mm]
	\partial_{x_2}\psi^{(\gamma)}=\rho^{(\gamma)} u_1^{(\gamma)},\\[1mm]
	\mathbb{B^{(\gamma)}}(\psi^{(\gamma)})=\frac{1}{2}|\bu^{(\gamma)}|^2+\frac{\gamma}{\gamma-1}\frac{ p^{(\gamma)}}{\rho^{(\gamma)}},\\[1mm]
G^{(\gamma)} (\psi^{(\gamma)}) = \frac{\rho^{(\gamma)}}{(p^{(\gamma)})^{\frac{1}{\gamma}}},\\[1mm]
	\partial_{x_1}u_2^{(\gamma)}-\partial_{x_2}u_1^{(\gamma)}=-\rho^{(\gamma)} (\mathbb{B}^{(\gamma)} )'-\frac{\gamma}{\gamma-1}\frac{(p^{(\gamma)})^{\frac{\gamma-1}{\gamma}}} {\rho^{(\gamma)}}(\mathbb{G}^{(\gamma)} )',
	\end{cases}
	\end{eqnarray*}
	with
	\begin{equation*}
	\psi^{(\gamma)}|_{x_2=w_1(x_1)}=0,\qquad \psi^{(\gamma)}|_{x_2=w_2(x_1)}=m,
	\end{equation*}
$\partial_{x_2}\psi^{(\gamma)}>0$, and $|\theta^{(\gamma)}|\leq \theta_B$.

We then show the almost everywhere convergence of $(\rho^{(\gamma)}, \bu^{(\gamma)}, p^{(\gamma)})(\bx)$,
by employing the compensated compactness framework established in Chen-Huang-Wang-Xiang \cite{ChenHuangWangXiang}.
Here $B^{(\gamma)}$, $S^{(\gamma)}$, and $\omega^{(\gamma)}$ are defined as:
\begin{eqnarray*}
\begin{cases}
	B^{(\gamma)}(\bx):=\mathbb{B^{(\gamma)}}(\psi^{(\gamma)}(\bx))=\frac{1}{2}|\bu^{(\gamma)}(\bx)|^2+\frac{\gamma}{\gamma-1}\frac{ p^{(\gamma)}}{\rho^{(\gamma)}}(\bx),\\[1mm]
	G^{(\gamma)}(\bx):=\mathbb{G^{(\gamma)}}(\psi^{(\gamma)})(\bx)=(\rho^{(\gamma)}(p^{(\gamma)})^{-\frac{1}{\gamma}})(\bx),\\[1mm]
	\omega^{(\gamma)}(\bx):=-\rho^{(\gamma)} (\mathbb{B}^{(\gamma)} )'-\frac{\gamma}{\gamma-1}\frac{(p^{(\gamma)})^{\frac{\gamma+1}{\gamma}}} {\rho^{(\gamma)}}(\mathbb{G}^{(\gamma)} )'.
	\end{cases}
	\end{eqnarray*}
By a direct calculation, we have
	\begin{eqnarray*}
	\begin{cases}
	\partial_{x_1}B^{(\gamma)}(\bx)=-(\mathbb{B}^{(\gamma)})'(\psi^{(\gamma)})(\bx)\,(\rho^{(\gamma)} u_2^{(\gamma)})(\bx),\\[2mm]
	\partial_{x_2}B^{(\gamma)}(\bx)=(\mathbb{B}^{(\gamma)})'(\psi^{(\gamma)})(\bx)\,(\rho^{(\gamma)} u_1^{(\gamma)})(\bx).
	\end{cases}
	\end{eqnarray*}
Then $B^{(\gamma)}$ is uniformly bounded in $BV$, which implies its strong convergence.
The similar argument leads to the strong convergence of $G^{(\gamma)}$.
The vorticity sequence $\omega^{(\gamma)}$ is a measure sequence uniformly bounded, and compact in $H^{-1}_{loc}$.
	
Combining the subsonic condition $M(\psi^{(\gamma)})<1$ and
\begin{equation*}
\Big(\frac{2(\gamma-1)}{\gamma(\gamma+1)}\min (\mathbb{B}^{(\gamma)} \mathbb{G}^{(\gamma)})\Big)^{\frac{\gamma}{\gamma-1}}
	< p^{(\gamma)}
\le\Big(\frac{\gamma-1}{\gamma}\max (\mathbb{B}^{(\gamma)} \mathbb{G}^{(\gamma)})\Big)^{\frac{\gamma}{\gamma-1}}.
\end{equation*}
Theorem 2.2 in Chen-Huang-Wang-Xiang \cite{ChenHuangWangXiang} implies that
the solution sequence has a subsequence (still denoted 	by)
$(\rho^{(\gamma)}, \bu^{(\gamma)}, p^{(\gamma)})(\bx)$
that converges to a vector function
$(\bar{\rho}, \bar{\bu}, \bar{p})(\bx)$ {\it a.e.} in $\Omega$.
Then $\bar{\psi}$ can also be introduced as
	\begin{eqnarray*}
	\nabla_\bx\bar{\psi}=\bar{\rho}(-\bar{u}_2, \bar{u}_1),
	\end{eqnarray*}
with  $\partial_{x_2}\bar{\psi}\geq0$, $|\bar{\theta}|\leq\theta_B$,
$\bar{\psi}|_{x_2=w_1(x_1)}=0$, and $\bar{\psi}|_{x_2=w_2(x_1)}=m$.
Also, it is easy to see that $\psi^{(\gamma)}$ converges to $\bar{\psi}$
in $Lip(\Omega)$.
Moreover, $\bar{\rho}>0$ and
$p^{(\gamma)}=\frac{\gamma-1}{\gamma}\rho^{(\gamma)}\big(B^{(\gamma)}-\frac{1}{2}|\bu^{(\gamma)}|^2\big)\rightarrow \bar{p}$ almost everywhere in $\Omega$.
\end{proof}

\begin{remark}
Different from the proof of the subsonic-sonic limit, we need to construct the incompressible solutions with free boundary
via the incompressible limit with the approximating solutions obtained by Theorem {\rm \ref{thm:smooth}}.
Therefore, the proof of the incompressible limit is different from that of the subsonic-sonic limit,
{i.e.} Theorem {\rm \ref{thm5.2}}, from the very beginning.
\end{remark}

\section{Well-posedness of Inhomogeneous Incompressible Euler Flows}\label{section:8b}
Finally, we study the regularity and uniqueness of solutions of
the incompressible Euler equations obtained by Theorem \ref{thm5.3}.

For the conditions at the inlet for the smooth case,
we require that $(\rho_-, u_{1-})\in (C^{1,1}([0,1]))^2$
and satisfy
\begin{equation}\label{assumption:3.40ic}
m=\int_0^1(\rho_-u_{1-})(x_2)\textrm{d}x_2,\quad\, \inf\limits_{x_2\in[0,1]}u_{1-}(x_2)>0,\quad\,
\inf\limits_{x_2\in[0,1]}\rho_{-}(x_2)>0.
\end{equation}
On the boundary, we always assume the following monotone properties for $(\rho_-, u_{1-})$:
\begin{equation}\label{assumption:3.47aic}
(\rho_-u_{1-}^2)'(0)\leq0,
\qquad\,
(\rho_-u_{1-}^2)'(1)\geq0.
\end{equation}

Similar to the smooth case, for the conditions at the inlet of the non-smooth case,
we require that $(\rho_-, u_{1-})\in (C^{1,1}[0,x_d))^2\cup (C^{1,1}(x_d,1])^2$
and satisfy \eqref{assumption:3.40ic}--\eqref{assumption:3.47aic}.
Moreover, $(\rho_-, u_{1-})$ satisfy either
\begin{equation}\label{assumption:4.2ic}
\begin{cases}
\inf\limits_{x_{d}-\varepsilon_0<s<x_{d}}(\rho_-u_{1-}^2)'(s)\geq0,\\[1mm]
\sup\limits_{x_{d}-\varepsilon_0<s< x_{d}}\rho_{-}'(s)\leq0,\\[2mm]
[u_{1-}^2\rho_-]\geq0,\\[1mm]
[\rho_{-}]\leq0,
\end{cases}
\end{equation}
or
\begin{equation}\label{assumption:4.3ic}
\begin{cases}
\sup\limits_{x_{d}<s<x_{d}+\varepsilon_0}(\rho_-u_{1-}^2)'(s)\leq0,\\[1mm]
\inf\limits_{x_{d}<s<x_{d}+\varepsilon_0}\rho_{-}'(s)\geq0,\\[2mm]
[u_{1-}^2\rho_-]\leq0, \\[1mm]
[\rho_{-}]\geq0.
\end{cases}
\end{equation}

Finally, we have the following theorem for the regularity and uniqueness of solutions of
the inhomogeneous incompressible Euler equations via the incompressible limit.

\begin{theorem}[Incompressible Euler flows]\label{thm:smoothIC}
For any given density function $\rho_{-}$ and horizontal velocity $u_{1-}$ at the inlet,
which either are $C^{1,1}$ and satisfy assumptions \eqref{assumption:3.40ic}--\eqref{assumption:3.47aic}
or are piecewise $C^{1,1}$ and satisfy assumptions \eqref{assumption:3.40ic}--\eqref{assumption:4.3ic},
there exists a solution $(\rho, \bu, p)$ of the inhomogeneous incompressible Euler equations \eqref{ICIHE}
with conditions \eqref{cdx-6}--\eqref{cdx-11} satisfying
$$
|\theta|\le \theta_B, \qquad \min_{\partial\Omega} p\le p\le \max_{\partial\Omega} p,
$$
and
\begin{enumerate}
\item[(i)] When $(\rho_{-}, u_{1-})$ are $C^{1,1}$--functions,
then solution $(\rho, \bu,p)\in (C^{1,\alpha}(\Omega))^4$.
The solution satisfies the additional properties \eqref{equ:2.59a}--\eqref{equ:2.60a}
as $x_1\rightarrow\pm\infty$
uniformly for $x_2\in K\Subset(0,1)$.
Moreover, the solution satisfies
\eqref{ieq:nondegeneracy0}--\eqref{ieq:streamline}.
	
\smallskip
\item[(ii)] When $(\rho_{-}, u_{1-})$ are piecewise $C^{1,1}$--functions,
then $(\rho, \bu,p)\in (C^{1,\alpha}(\Omega)\backslash\Gamma)^4$
and is a solution of the inhomogeneous incompressible Euler equations \eqref{ICEuler} in the weak sense.
The solution satisfies the additional properties \eqref{equ:2.59a}$-$\eqref{equ:2.60a} as $x_1\rightarrow\pm\infty$
uniformly for $x_2\in K\Subset(0,1)$ away from the discontinuity $\Gamma$.
The discontinuity $\Gamma$ is a streamline with the Lipschitz regularity.
Moreover, the solution satisfies
\eqref{ieq:nondegeneracy0}--\eqref{ieq:streamline}.
\end{enumerate}
\end{theorem}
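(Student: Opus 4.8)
The plan is to obtain $(\rho,\bu,p)$ as the incompressible limit $\gamma\to\infty$ of the compressible solutions of Theorems \ref{thm:smooth}--\ref{thm:discontinuity} supplied by Theorem \ref{thm5.3}, and then to recover the claimed regularity, the non-degeneracy of the streamlines, the far-field behavior, and the uniqueness by re-running, in the incompressible setting, the arguments of \S\ref{sec:existence smooth}--\S\ref{sec:mathsettingweak}. First I would set up the approximation: given $(\rho_-,u_{1-})$ as in the statement, take $G_-:=\rho_-$ and keep $u_{1-}$ unchanged, regularizing away from the jump point $x_2=x_d$ as in \S\ref{sec:mathsettingweak} when needed. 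Since $S=\frac{\gamma}{\gamma-1}G^{-\gamma}$ and $\rho\mapsto\rho^{-\gamma}$ is decreasing, one has $u_{1-}^2S_-^{-\frac1\gamma}=\big(\frac{\gamma}{\gamma-1}\big)^{-\frac1\gamma}u_{1-}^2\rho_-$ while $S_-'$ has the sign opposite to $\rho_-'$ and $[S_-]$ the sign opposite to $[\rho_-]$; hence, for every $\gamma$, the monotonicity and jump conditions \eqref{assumption:3.47aic}--\eqref{assumption:4.3ic} are equivalent to the hypotheses \eqref{assumption:3.47a}--\eqref{assumption:4.3} required by Theorems \ref{thm:smooth}--\ref{thm:discontinuity}. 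Theorem \ref{thm5.3} then yields a sequence $m^{(\gamma)}\to\bar m$ and an a.e.\ limit $(\bar\rho,\bar\bu,\bar p)$ that solves \eqref{ICEuler} weakly, together with $|\bar\theta|\le\theta_B$, $\min_{\partial\Omega}\bar p\le\bar p\le\max_{\partial\Omega}\bar p$, $\partial_{x_2}\bar\psi\ge0$, $0\le\bar\psi\le m$, the boundary values $\bar\psi|_{x_2=w_1(x_1)}=0$ and $\bar\psi|_{x_2=w_2(x_1)}=m$, and $\psi^{(\gamma)}\to\bar\psi$ in $Lip(\Omega)$, where $\nabla_\bx\bar\psi=\bar\rho(-\bar u_2,\bar u_1)$.

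Next I would establish interior regularity and non-degeneracy. On $\{\partial_{x_2}\bar\psi\neq0\}$ the density is transported along streamlines, so $\bar\rho=\mathbb{G}(\bar\psi)$ takes values in a fixed compact subset of $(0,\infty)$; eliminating $\bar p$ from the momentum equations exactly as in the derivation of \eqref{equ:3.18} yields a quasilinear second-order equation for $\bar\psi$ with principal part comparable to $\bar\rho^{-1}\Delta$, which is therefore uniformly elliptic (uniform ellipticity being automatic in the incompressible case, with no Mach-number restriction). Differentiating this equation in $x_2$ gives the incompressible analogue of \eqref{equ:5.18} for $\cu:=\partial_{x_2}\bar\psi$; combining the strong maximum principle for it with Hopf's lemma along $\partial\Omega$ as in \S\ref{subsec:5.3} and the blow-up argument at the far field of Step 5 of the proof of Theorem \ref{prop:5.1}, I would upgrade $\partial_{x_2}\bar\psi\ge0$ to $\partial_{x_2}\bar\psi>0$ (everywhere in case (i), and in $\Omega\backslash\Gamma$ in case (ii)), which together with the boundary values yields \eqref{ieq:nondegeneracy0}--\eqref{ieq:streamline}. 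Interior Schauder estimates then give $\bar\psi\in C^{2,\alpha}$ on compact subsets where $(\mathbb{B},\mathbb{G})$ are $C^{1,1}$, whence $(\rho,\bu,p)\in(C^{1,\alpha})^4$ in case (i) and in $\Omega\backslash\Gamma$ in case (ii). Finally, the derivation of Lemma \ref{lem:5.4qmax} carries over with $M=0$, producing strictly elliptic equations for $p$ and $\theta$ (the $M=0$ versions of \eqref{5.35a}--\eqref{5.35b}) and hence $|\theta|\le\theta_B$ and $\min_{\partial\Omega}p\le p\le\max_{\partial\Omega}p$.

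I would then handle the far-field behavior and uniqueness in the Euler--Lagrange coordinates $\bz=(z_1,z_2)=(x_1,\bar\psi)$ with $\bar\varphi:=x_2$, where the incompressible momentum equations reduce to a scalar second-order elliptic equation for $\bar\varphi$ — the incompressible counterpart of \eqref{equ:6.14}, with $\bar p$ an explicit function of $(\nabla_\bz\bar\varphi,z_2)$ through the limiting relation $\bar B=\frac12|\bar\bu|^2+\bar p/\bar\rho$ and $\bar\rho=\mathbb{G}(\bar\psi)$. The Caccioppoli--Poincar\'e estimate of Lemma \ref{prop:5.2} forces $\partial_{z_1}\bar\varphi\equiv0$ at both ends, so $u_2\to0$ there, and the asymptotic relations \eqref{equ:2.59a}--\eqref{equ:2.60a} follow from the uniqueness of the one-dimensional profile as in \S\ref{subset:asymptotic state plus}, which also determines $\rho_\pm$, $u_{1\pm}$ and $p_\pm$ from the inlet data and the mass flux. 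Uniqueness of the full flow follows by subtracting the Lagrangian equations of two solutions and running the weighted Caccioppoli/Poincar\'e argument of Lemma \ref{prop:6.1} in case (i), and of Lemma \ref{prop:9.2uniqueness} in case (ii), where only $L^\infty$ bounds on $(\bu,p)$ and $\varphi\in BV_{\rm loc}$ are available and the line $z_2=m_d$ splits the integration domain. For the discontinuity in case (ii), with $m_d:=\int_0^{x_d}(\rho_-u_{1-})\,\dd x_2$ and $\Gamma:=\{\bar\psi=m_d\}$, translating \eqref{assumption:4.2ic}--\eqref{assumption:4.3ic} through $S=\frac{\gamma}{\gamma-1}G^{-\gamma}$ and passing to the limit shows that the limiting vorticity measure has a fixed strict sign on a one-sided neighbourhood of $\Gamma$ (the incompressible analogue of \eqref{condition:8.2}); then the contradiction argument of Lemma \ref{lem:6.2xw} — localized in the Lagrangian coordinates, splitting the test region into a slow part where $u_1$ is small and a fast part, and using the signed vorticity term to absorb the $u_2$-contribution — shows that the interior of $\Omega(m_d)$ is empty, so $\Gamma$ is a Lipschitz curve; it is a streamline since $\bar\psi\equiv m_d$ there, and $(\rho\bu\cdot\bn)_\pm=0$ on $\Gamma$ in the sense of Chen--Frid \cite{Chen7} follows as in the proof of Theorem \ref{thm:discontinuity}, since across $\Gamma$ either $[\bar\rho]\neq0$ (entropy-type wave) or $[\bar B]\neq0$ (vortex sheet).

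The hardest part will be the second step: since the compensated-compactness passage behind Theorem \ref{thm5.3} provides only a.e.\ convergence and Lipschitz regularity, the stream-function formulation of the incompressible flow and, above all, the strict positivity $\partial_{x_2}\bar\psi>0$ together with its non-degeneracy at the far field must be re-established by hand; likewise, in case (ii), proving that the interior of $\Omega(m_d)$ is empty hinges on the precise translation of the inlet sign conditions into a one-sided strict sign for the limiting vorticity measure.
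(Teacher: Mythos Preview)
Your proposal is correct and follows essentially the same route as the paper: obtain the incompressible flow as the limit in Theorem \ref{thm5.3}, recover $C^{2,\alpha}$--regularity and $\partial_{x_2}\bar\psi>0$ away from $\{\bar\psi=m_d\}$ by re-running the smooth-case arguments on the limiting elliptic stream-function equation, rule out a nontrivial interior for $\Omega(m_d)$ via the signed-vorticity contradiction of Lemma \ref{lem:6.2xw} (with the incompressible analogue \eqref{condition:8.2ic} of \eqref{condition:8.2}), and prove uniqueness in the Lagrangian coordinates as in Lemma \ref{prop:9.2uniqueness}. The only cosmetic differences are that the paper inherits the bounds $|\bar\theta|\le\theta_B$ and $\min_{\partial\Omega}\bar p\le\bar p\le\max_{\partial\Omega}\bar p$ directly from the approximating sequence rather than re-deriving the $M=0$ versions of \eqref{5.35a}--\eqref{5.35b}, and presents the empty-interior argument before uniqueness rather than after the far-field analysis.
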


\begin{proof}
The proof of the result corresponding to the smooth solutions is similar, but simpler than that of the result
corresponding to the weak solutions with discontinuities.
Therefore, we consider only the non-smooth case.
We suppose that $(\bar{\rho}, \bar{\bu}, \bar{p})$ is the limit in Theorem \ref{thm5.3},
satisfying \eqref{ICEuler} weakly. The proof is divided into three steps.
	
\medskip
1. First, we show the $C^{2, \alpha}$--convergence away from the discontinuity. The argument here is similar to the one of  Lemma \ref{prop:9.1existence}.
We give its outline for completeness.

For fixed $\psi^{(\gamma)}$, define
$$
\Omega^{(\gamma)}(\dot{m})=\begin{cases}
\{\bx \;:\; \bx\in\Omega, \psi^{(\gamma)}(\bx)<\dot{m} \} \qquad \mbox{when $0<\dot{m}<m_d$},\\
\{\bx \;:\; \bx\in\Omega, \psi^{(\gamma)}(\bx)>\dot{m} \}\qquad \mbox{when $m_d<\dot{m}<m$}.
\end{cases}
$$
Then the following properties hold:
\begin{enumerate}
\item[(i)] For $0<\dot{m}_1<\dot{m}_2<m_d$, $\Omega^{(\gamma)}(\dot{m}_1)\subsetneq\Omega^{(\gamma)}(\dot{m}_2)$;

\smallskip
\item[(ii)] For $m_d<\dot{m}_1<\dot{m}_2<m$, $\Omega^{(\gamma)}(\dot{m}_1)\supsetneq\Omega^{(\gamma)}(\dot{m}_2)$;

\smallskip
\item[(iii)] For each $\gamma$, $\overline{\Omega}=\overline{\big(\cup_{0<\dot{m}<m_d} \Omega^{(\gamma)}(\dot{m})\big)\cup \big(\cup_{m_d<\dot{m}<m} \Omega^{(\gamma)}(\dot{m})\big)}$.
\end{enumerate}

Similarly, we can define the sets with respect to $\bar{\psi}$:
$$
\Omega(\dot{m})=\begin{cases}
\{\bx \,:\, \bx\in\Omega, \bar{\psi}(\bx)<\dot{m} \} \qquad\mbox{when $0<\dot{m}<m_d$},\\[1mm]
\{\bx \,:\, \bx\in\Omega, \bar{\psi}(\bx)>\dot{m} \} \qquad\mbox{when $m_d<\dot{m}<m$},
\end{cases}
$$
and
$$
\Omega(m_d)=\{\bx\,:\, \bx\in\Omega, \bar{\psi}(\bx)=m_d \},
$$
with the following properties:
\begin{enumerate}	
\item[(i)] For $0<\dot{m}_1<\dot{m}_2<m_d$, $\Omega(\dot{m}_1)\subset\Omega(\dot{m}_2)$;

\smallskip
\item[(ii)] For $m_d<\dot{m}_1<\dot{m}_2<m$, $\Omega(\dot{m}_1)\supset\Omega(\dot{m}_2)$;

\smallskip
\item[(iii)]  $\overline{\Omega}=\overline{\cup_{0<\dot{m}<m_d} \Omega(\dot{m})\cup \cup_{m_d<\dot{m}<m} \Omega(\dot{m}) \cup \Omega(m_d)}$.
\end{enumerate}	
Since the Lipschitz convergence of $\psi^{(\gamma)}$ to $\bar{\psi}$ as $\gamma\to \infty$,
we can obtain that, for each $\dot{m}\neq m_d$, $\Omega^\varepsilon(\dot{m})\to \Omega(\dot{m})$.
	
From the definition of modification, we see that,
for $0<\dot{m}<m_d$, in the interval $[0, \dot{m}]$,
$(\mathbb{B}^{(\gamma)}, \mathbb{S}^{(\gamma)})\to (\mathbb{B}, \mathbb{S})$ in $C^{2, \alpha'}$ with $\alpha'<\alpha$.
On the other hand, for the approximate sequence $\psi^{(\gamma)}$,
$$
\mathbb{B}^{(\gamma)}(\psi^{(\gamma)})=B^{(\gamma)}, \quad
\mathbb{G}^{(\gamma)}(\psi^{(\gamma)})=G^{(\gamma)}, \quad
\partial_{x_1}u_2^{(\gamma)}-\partial_{x_2}u_1^{(\gamma)}=\omega^{(\gamma)}.
$$
Then, by taking a subsequence, we conclude that, in $\Omega(\dot{m})$,
	\begin{eqnarray*}
	\begin{cases}
	\mathbb{B}(\bar{\psi})=\frac{1}{2}|\bar{\bu}|^2+\frac{ \bar{p}}{\bar{\rho}},\\[1mm]
	\mathbb{G}(\bar{\psi})= \bar{\rho},\\
	\partial_{x_1}\bar{u}_2-\partial_{x_2}\bar{u}_1=
	-\bar{\rho} \mathbb{B}'-\frac{\bar{p} }{\bar{\rho}}\mathbb{G}'.
	\end{cases}
	\end{eqnarray*}
Moreover, $\bar{\psi}$ satisfies
\begin{equation*}
\partial_{x_1}(\frac{\partial_{x_1}\bar{\psi}}{\mathbb{G}})+\partial_{x_2}(\frac{\partial_{x_2}\bar{\psi}}{\mathbb{G}})
=-\bar{\rho} \mathbb{B}'-\frac{\bar{p} }{\bar{\rho}}\mathbb{G}'.
\end{equation*}
Employing the same argument as in the previous sections,
we can prove that, for any compact set $K\Subset\cup_{0<\dot{m}<m_d} \Omega(\dot{m})$,
$\bar{\psi}$ is a $C^{2, \alpha}$ smooth function with $\partial_{x_2}\bar{\psi}>0$.
The argument for $\cup_{m_d<\dot{m}<m} \Omega(\dot{m})$ is similar.
	
\medskip
2. We now show the interior of $\Omega(m_d)$ is empty.
The argument here is similar to the one for Lemma \ref{prop:9.1existence} via replacing $\mathbb{S}$ by $\mathbb{G}$.
More precisely, we prove the claim by contradiction.

If the interior $Int(\Omega(m_d))$ of $\Omega(m_d)$ is non-empty, then $\bar{u}_1=\bar{u}_2=0$ in $Int(\Omega(m_d))$, since $\bar{\psi}=m_d$.
Without loss of generality, we assume
\begin{equation}\label{condition:8.2ic}
	(\mathbb{B}\mathbb{G})'(m_d-)>0,\qquad\,\,
	\mathbb{B}'(m_d-)>0.
\end{equation}
	
Let the lower boundary of $\Omega(m_d)$ be $\ell$.
Then, for any test function $\eta$ compactly supported in a connected open set $\Sigma$ with  $\Sigma\cap \ell\neq \emptyset$,
and for the bounded measure $\bar{\omega}$, we have
\begin{eqnarray*}
\langle\eta, \bar{\omega}\rangle
=-\int_{\Sigma}\big(\bar{u}_2\partial_{x_1}\eta-\bar{u}_1\partial_{x_2}\eta\big)\,\dd\bf{x},
\end{eqnarray*}
where $\langle \eta, \bar{\omega}\rangle$
is the limit of $\int \eta\omega^{\varepsilon} \dd\bf{x}$.

Similar to the proof of Lemma \ref{lem:6.2xw},
let $\Sigma_-:= Int(\cup_{0<\dot{m}<m_d} \Omega(\dot{m}))\cap \Sigma $ be the lower part of $\Sigma$,
while the upper part is  $\Sigma_+:= Int ( \Omega(m_d))\cap \Sigma$.
It is obvious that
	$$
	\int_{\Sigma_+}\big(\bar{u}_2\partial_{x_1}\eta-\bar{u}_1\partial_{x_2}\eta\big)\, \dd\bx=0.
	$$
Divide $\Sigma_-$ into two subregions: $\Sigma_-=\Sigma_s\cup \Sigma_b$,
where
$$
\Sigma_s:=\{\bx \,:\,
\bar{u}_1>\sigma>0\}\cap\Sigma_-,
\qquad 	\Sigma_b:=\{\bx\, :\,
	\sigma\geq\bar{u}_1>0\}\cap\Sigma_-,
$$
where $\sigma$ is a given small constant.
Then, following the same argument in the proof of Lemma \ref{lem:6.2xw}, we have
\begin{align*}
\int_{\Sigma_-}\big(\bar{u}_2\partial_{x_1}\eta-\bar{u}_1\partial_{x_2}\eta\big)\, \dd\mathbf{x}
	\leq\frac{C\sigma}{b}|\Sigma_s|
	\end{align*}
for $\eta$ as chosen in \eqref{6.14a} involving $b$ and $w$,
where $C$ is the uniform positive constant independent of $w$.
	
However, by \eqref{condition:8.2ic}, we find  that
vorticity $\omega$ is a negative measure with a uniform lower bound,
which is the contradiction.
In fact, by  \eqref{condition:8.2ic},
\begin{eqnarray*}
\bar{\omega}&=&-\rho\mathbb{B}'-\bar{p}\frac{\mathbb{G}'}{\mathbb{G}}
<-\rho\mathbb{B}'+\bar{p}\frac{\mathbb{B}'}{\mathbb{B}}\nonumber\\
&=&-\bar{\rho}\mathbb{B}'(1-\frac{\bar{p}}{\bar{\rho}\mathbb{B}})
=-\frac{1}{2}\bar{\rho}|\bar{\bu}|^2\mathbb{B}'\leq0.
\end{eqnarray*}
Then $\bar{\omega}$ is strictly negative.
This implies that $\bar{\omega}\leq-C^{-1}<0$ in $\Sigma_-$.
Therefore, we have
$$
-\frac{C\sigma}{b}|\Sigma_-|\leq -\frac{C\sigma}{b}|\Sigma_s|
\leq-\int_{\Sigma_-}\big(\bar{u}_2\partial_{x_1}\eta-\bar{u}_1\partial_{x_2}\eta\big)\, \dd\mathbf{x}
= \langle \bar{\omega},\eta\rangle
\leq -C^{-1}|\Sigma_-|<0,
$$
which is impossible if $\sigma$ is chosen small enough.
Thus, we conclude
$$
\overline{\big(\cup_{0<\dot{m}<m_d} \Omega(\dot{m})\big)\cup\big(\cup_{m_d<\dot{m}<m} \Omega(\dot{m})\big)}\equiv\overline{\Omega}.
$$
Therefore, we prove that $\bar{\psi}$ is a solution of the inhomogeneous incompressible equation \eqref{ICEuler}
 with $\partial_{x_2}\bar{\psi}\geq0$, $|\bar{\theta}|\leq\theta_B$, and $\min_{\partial \Omega}\le p \le\max_{\partial\Omega} p\,$ \textit{a.e.}.
 Furthermore, $\Omega(m_d)$ is a Lipschitz curve.
 In particular, in any compact set $K\subset \Omega-\Omega(m_d)$,
 $\bar{\psi}\in C^{2, \alpha}(K)$ and $\partial_{x_2}\bar{\psi}>0$.

\smallskip
3. It is clear that the solutions satisfy properties
\eqref{ieq:nondegeneracy0}--\eqref{ieq:streamline} and \eqref{equ:2.59a}--\eqref{equ:2.61a}.
We now consider  the uniqueness of weak solutions with discontinuities.
Suppose that $\bar{\psi}^{(i)}$, $i=1, 2$, are any two solutions.
For each $\bar{\psi}^{(i)}$, we can obtain the corresponding potential function $\bar{\varphi}^{(i)}$ in the Lagrangian coordinates.
Then \eqref{weakLequation} yields
\begin{eqnarray*}
0
=\int_{\mathbb{R}\times[0, m]} \Big(\frac{\partial_{z_1}\bar{\varphi}^{(i)}}{\rho(z_2)\partial_{z_2}\bar{\varphi}^{(i)}}\partial_{z_1} \eta
 + p(\nabla_z\bar{\varphi}^{(i)}, z_2)\partial_{z_2}\eta\Big)\, \dd \mathbf{z}\qquad\mbox{for $i=1, 2$},  \label{weakLequationi}
\end{eqnarray*}
where density $\rho$ is a function of only one variable $z_2$,
and pressure $p=\rho(z_2)\mathbb{B}(z_2)-\frac{1+(\partial_{z_1}\bar{\varphi}^{(i)})^2}{(\partial_{z_2}\bar{\varphi}^{(i)})^2}$.
	
By a direct calculation, $\hat{\varphi}=\bar{\varphi}^{(1)}-\bar{\varphi}^{(2)}$ satisfies
	\begin{equation*}
	\int_{\mathbb{R}\times[0, m]} \big(
	(\hat{a}_{11}\partial_{z_1}\hat{\varphi}+\hat{a}_{12}\partial_{z_2}\hat{\varphi})
	\partial_{z_1} \eta+ (\hat{a}_{21}\partial_{z_1}\hat{\varphi}+\hat{a}_{22}\partial_{z_2}\hat{\varphi})\partial_{z_2}\eta\big)\, \dd \mathbf{z}=0
	\end{equation*}
with
\begin{align*}
&\hat{a}_{11}=\int_1^2\frac{\dd\tau}{\rho( z_2)\partial_{z_2}\varphi^{(\tau)}},
\qquad\qquad
	\hat{a}_{12}=-\int_1^2\frac{\partial_{z_1}\varphi^{(\tau)}}{\rho(z_2)(\partial_{z_2}\varphi^{(\tau)})^2}\,\dd \tau,\\[1.5mm]
&	\hat{a}_{21}=-\int_1^2\frac{\partial_{z_1}\varphi^{(\tau)}}{(\partial_{z_2}\varphi^{(\tau)})^2}\,\dd \tau,
\qquad\quad	
	\hat{a}_{22}=2\int_1^2\frac{1+(\partial_{z_1}\varphi^{(\tau)})^2}{(\partial_{z_2}\varphi^{(\tau)})^3}\,\dd \tau,
\end{align*}
where
$\varphi^{(\tau)}=(2-\tau)\bar{\varphi}^{(1)}+(\tau-1)\bar{\varphi}^{(2)}$ 	for some $\tau\in(1,2)$,
and $\tilde{\eta}_{L}(z_1)$ is the domain cut-off function supported on $[-L, L]$.
We pick $\eta=\hat{\varphi}\tilde{\eta}_L$ and let $L\rightarrow \infty$. Then
\begin{equation*}
\int_{\mathbb{R}\times[0, m]}
\Big(	\hat{a}_{11}\partial_{z_1}\hat{\varphi}\partial_{z_1}\hat{\varphi}+\hat{a}_{12}\partial_{z_1}\hat{\varphi}\partial_{z_2}\hat{\varphi}
	+ \hat{a}_{21}\partial_{z_1}\hat{\varphi}\partial_{z_2}\hat{\varphi}+\hat{a}_{22}\partial_{z_2}\hat{\varphi}\partial_{z_2}\hat{\varphi}\Big)\, \dd \mathbf{z}=0.
\end{equation*}
Note that, by the ellipticity, for any ${\boldsymbol \xi}\in\mathbb{R}^2\backslash\{\mathbf{0}\}$,
$$
0 \leq C\int_1^2\frac{\dd\tau}{\rho(z_2)(\partial_{z_2}\varphi^{(\tau)})^3}\, |{\boldsymbol \xi}|^2
\leq\sum_{j,k=1}^2 \hat{a}_{jk}\xi_j\xi_k.
$$
Moreover, note that
$\int_1^2\frac{\dd\tau}{\rho(z_2)(\partial_{z_2}\varphi^{(\tau)})^3}=0$ holds
	if and only if $\partial_{z_2}\hat{\varphi}=0$.
Therefore, we conclude that  $\bar{\varphi}^{(1)}=\bar{\varphi}^{(2)}$.
\end{proof}

\section{Remarks on steady full Euler flows with conservative exterior forces}

The steady full Euler flows with an exterior force are governed by
\begin{eqnarray}\label{EulerCF}
\begin{cases}
\mbox{div}_{\bx}(\rho \bu)=0,\\
\mbox{div}_{\bx}(\rho \bu\otimes \bu)+ \nabla p=\rho \bf{G},\\
\mbox{div}_{\bx} (\rho \bu E+ p \bu)=\rho \bf{u}\cdot \bf{G},
\end{cases}
\end{eqnarray}
where $\rho \bf{G}$ is the exterior force.
When $\bf{G}$ is in the conservative form, the situation can be reduced to the case without the exterior force.
As a conservative force, we can introduce a potential function $\phi$ such that $\bf{G}=\nabla\phi$.
Then we obtain that, from $(\ref{EulerCF})_3$,
\begin{equation}
\mbox{div}_{\bx} (\rho \bu E+ p \bu)=\rho \bf{u}\cdot \bf{G}=\rho \bf{u}\cdot \nabla\phi=\mbox{div}_{\bx}(\rho \bu \phi)-\phi\mbox{div}_{\bx}(\rho \bu )=\mbox{div}_{\bx}(\rho \bu \phi).
\end{equation}
Then the new transport equation of the Bernoulli function is
\begin{equation}
\mbox{div}_{\bx}(\rho \bu (B-\phi))=0.
\end{equation}
The transport equation of the entropy function $S$ is the same as the case without the exterior force
from $\bu\cdot((\ref{EulerCF})_2-\bu(\ref{EulerCF})_1)+\frac{|\bu|^2}{2}(\ref{EulerCF})_1-(\ref{EulerCF})_3$:
$$
\mbox{div}_{\bx}(\rho \bu S)=0.
$$
Furthermore, the Bernoulli-vortex relation stays as before.

By replacing $B$ by $B-\phi$, all the corresponding results in Theorems 2.1--2.2, Theorems 7.1--7.2, and Theorem 8.1
are still valid for this case,
since the same arguments in all the proofs apply to this case as well.
Formally, the conservative force just influences the density-speed relations,
but does not change the vorticity of fluid field, since the exterior force is an irrotational
vector field.
Moreover, choosing ${\bf G}=(1, 0)^\top$ as the gravity is the motivation of jump condition
at Lemma \ref{lem:3.1xw}.  See also Gu-Wang \cite{GW} for a similar argument.

\medskip
\bigskip
\textbf{Acknowledgments}: The research of Gui-Qiang Chen was supported in part by
the UK
Engineering and Physical Sciences Research Council Award
EP/E035027/1 and
EP/L015811/1, and the Royal Society--Wolfson Research Merit Award (UK).
The research of Fei-Min Huang was supported in part by NSFC Grant
No. 11688101,
and Key Research Program of Frontier Sciences, CAS.
The research of Tian-Yi Wang was supported in part by NSFC Grant
No. 11601401 and the Fundamental Research Funds for the Central
Universities (WUT: 2017 IVA 072 and 2017 IVB 066).
The research of Wei Xiang was supported in part by the CityU
Start-Up Grant for New Faculty 7200429(MA), and the Research Grants Council of the
HKSAR,
China (Project No. CityU 21305215, Project No. CityU 11332916,
Project No. CityU 11304817, and Project No. CityU 11303518).

\end{document}